\def\definetac{\newif\iftac}    
\else\usepackage{amsthm}\fi
\definecolor{darkgreen}{rgb}{0,0.45,0} 
\let\ea\expandafter
\def\mdef#1#2{\ea\ea\ea\gdef\ea\ea\noexpand#1\ea{\ea\ensuremath\ea{#2}\xspace}}
\def\alwaysmath#1{\ea\ea\ea\global\ea\ea\ea\let\ea\ea\csname your@#1\endcsname\csname #1\endcsname
  \ea\def\csname #1\endcsname{\ensuremath{\csname your@#1\endcsname}\xspace}}
\DeclareRobustCommand\widecheck[1]{{\mathpalette\@widecheck{#1}}}
\def\@widecheck#1#2{%
    \setbox\z@\hbox{\m@th$#1#2$}%
    \setbox\tw@\hbox{\m@th$#1%
       \widehat{%
          \vrule\@width\z@\@height\ht\z@
          \vrule\@height\z@\@width\wd\z@}$}%
    \dp\tw@-\ht\z@
    \@tempdima\ht\z@ \advance\@tempdima2\ht\tw@ \divide\@tempdima\thr@@
    \setbox\tw@\hbox{%
       \raise\@tempdima\hbox{\scalebox{1}[-1]{\lower\@tempdima\box
\tw@}}}%
    {\ooalign{\box\tw@ \cr \box\z@}}}
\def\foreachletter#1#2#3{\foreachcount=#1
  \ea\loop\ea\ea\ea#3\@alph\foreachcount
  \advance\foreachcount by 1
  \ifnum\foreachcount<#2\repeat}
\def\foreachLetter#1#2#3{\foreachcount=#1
  \ea\loop\ea\ea\ea#3\@Alph\foreachcount
  \advance\foreachcount by 1
  \ifnum\foreachcount<#2\repeat}
\def\definescr#1{\ea\gdef\csname s#1\endcsname{\ensuremath{\mathscr{#1}}\xspace}}
\def\definecal#1{\ea\gdef\csname c#1\endcsname{\ensuremath{\mathcal{#1}}\xspace}}
\def\definebold#1{\ea\gdef\csname b#1\endcsname{\ensuremath{\mathbf{#1}}\xspace}}
\def\definebb#1{\ea\gdef\csname l#1\endcsname{\ensuremath{\mathbb{#1}}\xspace}}
\def\definefrak#1{\ea\gdef\csname f#1\endcsname{\ensuremath{\mathfrak{#1}}\xspace}}
\def\definebar#1{\ea\gdef\csname #1bar\endcsname{\ensuremath{\overline{#1}}\xspace}}
\def\definetil#1{\ea\gdef\csname #1til\endcsname{\ensuremath{\widetilde{#1}}\xspace}}
\def\definehat#1{\ea\gdef\csname #1hat\endcsname{\ensuremath{\widehat{#1}}\xspace}}
\def\definechk#1{\ea\gdef\csname #1chk\endcsname{\ensuremath{\widecheck{#1}}\xspace}}
\def\defineul#1{\ea\gdef\csname u#1\endcsname{\ensuremath{\underline{#1}}\xspace}}
\def\autofmt@n#1\autofmt@end{\mathrm{#1}}
\def\autofmt@b#1\autofmt@end{\mathbf{#1}}
\def\autofmt@l#1#2\autofmt@end{\mathbb{#1}\mathsf{#2}}
\def\autofmt@c#1#2\autofmt@end{\mathcal{#1}\mathit{#2}}
\def\autofmt@s#1#2\autofmt@end{\mathscr{#1}\mathit{#2}}
\def\autofmt@f#1\autofmt@end{\mathsf{#1}}
\def\autofmt@u#1\autofmt@end{\underline{\smash{\mathsf{#1}}}}
\def\autofmt@U#1\autofmt@end{\underline{\underline{\smash{\mathsf{#1}}}}}
\def\autofmt@h#1\autofmt@end{\widehat{#1}}
\def\autofmt@r#1\autofmt@end{\overline{#1}}
\def\autofmt@t#1\autofmt@end{\widetilde{#1}}
\def\autofmt@k#1\autofmt@end{\check{#1}}
\def\auto@drop#1{}
\def\autodef#1{\ea\ea\ea\@autodef\ea\ea\ea#1\ea\auto@drop\string#1\autodef@end}
\def\@autodef#1#2#3\autodef@end{%
  \ea\def\ea#1\ea{\ea\ensuremath\ea{\csname autofmt@#2\endcsname#3\autofmt@end}\xspace}}
\def\autodefs@end{blarg!}
\def\autodefs#1{\@autodefs#1\autodefs@end}
\def\@autodefs#1{\ifx#1\autodefs@end%
  \def\autodefs@next{}%
  \else%
  \def\autodefs@next{\autodef#1\@autodefs}%
  \fi\autodefs@next}
\DeclareSymbolFont{bbold}{U}{bbold}{m}{n}
\DeclareSymbolFontAlphabet{\mathbbb}{bbold}
\newcommand{\bbone}{\ensuremath{\mathbbb{1}}\xspace}
\mdef\delbar{\overline{\partial}}
\newcommand{\inv}{^{-1}}
\mdef\hf{\textstyle\frac12 }
\mdef\thrd{\textstyle\frac13 }
\mdef\qtr{\textstyle\frac14 }
\newcommand{\op}{^{\mathrm{op}}}
\mdef\Id{\mathrm{Id}}
\mdef\id{\mathrm{id}}
\def\frc#1/#2.{\frac{#1}{#2}}   
\mdef\ten{\mathrel{\otimes}}
\mdef\sqten{\mathrel{\boxtimes}}
\DeclareRobustCommand\widecheck[1]{{\mathpalette\@widecheck{#1}}}
\def\@widecheck#1#2{%
    \setbox\z@\hbox{\m@th$#1#2$}%
    \setbox\tw@\hbox{\m@th$#1%
       \widehat{%
          \vrule\@width\z@\@height\ht\z@
          \vrule\@height\z@\@width\wd\z@}$}%
    \dp\tw@-\ht\z@
    \@tempdima\ht\z@ \advance\@tempdima2\ht\tw@ \divide\@tempdima\thr@@
    \setbox\tw@\hbox{%
       \raise\@tempdima\hbox{\scalebox{1}[-1]{\lower\@tempdima\box
\tw@}}}%
    {\ooalign{\box\tw@ \cr \box\z@}}}
\DeclareMathOperator\colim{colim}
\DeclareMathOperator\Hom{Hom}
\DeclareMathOperator\Ext{Ext}
\DeclareMathOperator\ind{ind}
\DeclareMathOperator\nhom{hom}
\DeclareMathOperator\nend{end}
\DeclareMathOperator\Rhom{\mathbf{R}hom}
\newcommand\Lotimes{\otimes^\mathbf{L}}
\newcommand{\ot}{\ensuremath{\leftarrow}}
\let\into\hookrightarrow
\mdef\we{\overset{\sim}{\longrightarrow}}
\mdef\leftwe{\overset{\sim}{\longleftarrow}}
\let\xto\xrightarrow
\def\rightarrowtailfill@{\arrowfill@{\Yright\joinrel\relbar}\relbar\rightarrow}
\newcommand\xrightarrowtail[2][]{\ext@arrow 0055{\rightarrowtailfill@}{#1}{#2}}
\def\twoheadrightarrowfill@{\arrowfill@{\relbar\joinrel\relbar}\relbar\twoheadrightarrow}
\newcommand\xtwoheadrightarrow[2][]{\ext@arrow 0055{\twoheadrightarrowfill@}{#1}{#2}}
\def\slashedarrowfill@#1#2#3#4#5{%
  $\m@th\thickmuskip0mu\medmuskip\thickmuskip\thinmuskip\thickmuskip
   \relax#5#1\mkern-7mu%
   \cleaders\hbox{$#5\mkern-2mu#2\mkern-2mu$}\hfill
   \mathclap{#3}\mathclap{#2}%
   \cleaders\hbox{$#5\mkern-2mu#2\mkern-2mu$}\hfill
   \mkern-7mu#4$%
}
\def\rightslashedarrowfill@{%
  \slashedarrowfill@\relbar\relbar\mapstochar\rightarrow}
\newcommand\xslashedrightarrow[2][]{%
  \ext@arrow 0055{\rightslashedarrowfill@}{#1}{#2}}
\mdef\hto{\xslashedrightarrow{}}
\mdef\htoo{\xslashedrightarrow{\quad}}
\def\toiso{\xto{\smash{\raisebox{-.5mm}{$\scriptstyle\sim$}}}}
\long\def\my@drawfill#1#2;{%
\@skipfalse
\fill[#1,draw=none] #2;
\@skiptrue
\draw[#1,fill=none] #2;
}
\newif\if@skip
\newcommand{\skipit}[1]{\if@skip\else#1\fi}
\newcommand{\drawfill}[1][]{\my@drawfill{#1}}
\newif\ifhyperref
  \let\your@state\state
  \def\state#1{\gdef\currthmtype{#1}\your@state{#1}}
  \let\your@staterm\staterm
  \def\staterm#1{\gdef\currthmtype{#1}\your@staterm{#1}}
  \let\defthm\newtheorem
  \def\currthmtype{}
    \def\autoref#1{\ref*{label@name@#1}~\ref{#1}}
    \def\autoref#1{\ref{label@name@#1}~\ref{#1}}
    \let\old@label\label%
    \def\label#1{%
      {\let\your@currentlabel\@currentlabel%
        \edef\@currentlabel{\currthmtype}%
        \old@label{label@name@#1}}%
      \old@label{#1}}
    \def\defthm#1#2{%
      \newtheorem{#1}{#2}[section]%
      \expandafter\def\csname #1autorefname\endcsname{#2}%
      \expandafter\let\csname c@#1\endcsname\c@thm}
    \def\defthm#1#2{\newtheorem{#1}[thm]{#2}}
\let\SK@label\label\fi
    \let\old@label\label
    \let\your@thm\@thm
    \def\@thm#1#2#3{\gdef\currthmtype{#3}\your@thm{#1}{#2}{#3}}
    \def\currthmtype{}
    \def\label#1{{\let\your@currentlabel\@currentlabel\def\@currentlabel%
        {\currthmtype~\your@currentlabel}%
        \SK@label{#1@}}\old@label{#1}}
    \def\autoref#1{\ref{#1@}}
\newtheorem{thm}{Theorem}[section]
\iftac\theoremstyle{plain}\else\theoremstyle{definition}\fi
\iftac\theoremstyle{plain}\else\theoremstyle{remark}\fi
\def\thmqedhere{\expandafter\csname\csname @currenvir\endcsname @qed\endcsname}
  \let\c@equation\c@subsection
  \let\c@equation\c@thm
\numberwithin{equation}{section}
\mdef\ep{\varepsilon}
\mdef\ph{\varphi}
\tikzset{lab/.style={auto,font=\scriptsize}} 
\newcommand{\D}{\sD}
\newcommand{\E}{\sE}
\newcommand{\sse}{\stackrel{\mathrm{s}}{\sim}}
\def\cPDER{\ensuremath{\mathcal{PD}\mathit{ER}}\xspace}
\def\ho{\mathscr{H}\!\mathit{o}\xspace}
\let\oldboxtimes\boxtimes
\def\boxtimes{\mathrel{\oldboxtimes}}
\newcommand{\fib}{\mathsf{fib}}
\newcommand{\cof}{\mathsf{cof}}
\def\ccsub{_{\mathrm{cc}}}
\def\pdh(#1,#2){\llbracket #1,#2\rrbracket}
\def\ldh(#1,#2){\llbracket #1,#2\rrbracket\ccsub}
\def\pend(#1){\pdh(#1,#1)}
\def\lend(#1){\ldh(#1,#1)}
\def\shift#1#2{{#1}^{#2}}
\def\DTl#1#2#3#4#5#6#7{%
  \xymatrix@C=3pc{{#1} \ar[r]^-{#2} &
    {#3} \ar[r]^-{#4} &
    {#5} \ar[r]^-{#6} &
    {#7}
  }}
\newsavebox{\tvabox}
\savebox\tvabox{\hspace{1mm}\begin{tikzpicture}[>=latex',baseline={(0,-.18)}]
  \draw[->] (0,.1) -- +(1,0);
  \node at (.5,0) {$\scriptscriptstyle\bot$};
  \draw[->] (1,-.1) -- +(-1,0);
  \draw[->] (1,-.2) -- +(-1,0);
\end{tikzpicture}\hspace{1mm}}
\renewcommand{\ex}{\mathrm{ex}}
\newcommand{\Ch}{\mathrm{Ch}}
\newcommand{\Mod}{\mathrm{Mod}}
\title{Tilting theory via stable homotopy theory}
\author{Moritz Groth}
\address{MPIM, Vivatsgasse 7, 53111 Bonn, Germany}
\email{mgroth@mpim-bonn.mpg.de}
\author{Jan \v{S}\v{t}ov\'{\i}\v{c}ek}
\address{Department of Algebra, Charles University in Prague, Sokolovska 83, 186 75 Praha~8, Czech Republic}
\email{stovicek@karlin.mff.cuni.cz}
\subjclass[2010]{Primary: 55U35. Secondary: 16E35, 18E30, 55U40.}
\keywords{Stable derivator, strong stable equivalence, strongly cartesian $n$-cubes, May's axioms for monoidal, triangulated categories} 
\date{\today}
\thanks{The first named author was supported by the Dutch Science Foundation (NWO). The second named author was supported by grant GA\v{C}R P201/12/G028 from the Czech Science Foundation.}
\begin{document}

\begin{abstract}
We show that certain tilting results for quivers are formal consequences of stability, and as such are part of a formal calculus available in any abstract stable homotopy theory. Thus these results are for example valid over arbitrary ground rings, for quasi-coherent modules on schemes, in the differential-graded context, in stable homotopy theory and also in the equivariant, motivic or parametrized variant thereof. In further work, we will continue developing this calculus and obtain additional abstract tilting results. Here, we also deduce an additional characterization of stability, based on Goodwillie's strongly (co)cartesian $n$-cubes.

As applications we construct abstract Auslander--Reiten translations and abstract Serre functors for the trivalent source and verify the relative fractionally Calabi--Yau property. This is used to offer a new perspective on May's axioms for monoidal, triangulated categories.
\end{abstract}

\maketitle

\tableofcontents

\section{Introduction}
\label{sec:intro}

Morita \cite{morita:duality} gave a complete answer to the following question. Given two rings $R$ and $S$, under which conditions are the categories of modules $\Mod(R)$ and $\Mod(S)$ equivalent? Morita showed that this is the case if and only if there is a bimodule $P\in\Mod(R\otimes S\op)$ with particularly nice properties. It is due to these insights that the study of equivalences of categories of modules is referred to as \emph{Morita theory}.

\emph{Tilting theory} \cite{tilting} is a derived version of Morita theory: the aim is to find conditions guaranteeing that (certain flavors of) derived categories of rings are equivalent as triangulated categories. The key role played by certain bimodules in Morita theory is now taken by the so-called \emph{tilting complexes}. Happel \cite{happel:fdalgebra,happel:triangulated} first observed the special case that for finite-dimensional algebras over a field tilting modules induce equivalences of derived categories. Later on, Rickard in \cite{rickard:morita} established a general `derived Morita theorem' for rings, and Keller~\cite{keller:dg} gave a conceptual explanation and a vast generalization of Rickard's results in the language of differential-graded categories. For further generalizations see also \cite{duggershipley:K,schwede:Morita} and the many references therein. A different generalization to the context of abelian categories was established by Ladkani in~\cite{ladkani:universal-der-eq,ladkani:thesis}.

An important class of algebras over a field is provided by path-algebras associated to quivers. Let us recall that a quiver $Q$ is simply an oriented graph, and that a module over the path-algebra $kQ$ is equivalently specified by a functor $Q\to \Mod(k)$. Tilting theory for quivers thus amounts to searching for conditions on quivers $Q,Q'$ which imply that there are exact equivalences of derived categories
\[
D(kQ)\stackrel{\Delta}{\simeq} D(kQ').
\]
Obviously, these conditions might depend on the field under consideration.

In this paper and its sequels we take a different perspective on tilting theory and show that some aspects of it are \emph{formal consequences of stability}. Let us explain what we mean by this. There are various approaches to axiomatic homotopy theory like model categories \cite{quillen:ha,hovey:modelcats}, $\infty$-categories \cite{joyal:barca,lurie:HTT}, and derivators \cite{heller:htpy,grothendieck:derivateurs}. In each of these cases there is a subclass which models \emph{stable} homotopy theories: the defining property is that there is some kind of a zero object and that homotopy pushouts and homotopy pullbacks coincide. Typical examples of stable homotopy theories are given by unbounded chain complexes, spectra in the sense of topology and the variants of equivariant stable, motivic stable, and parametrized stable homotopy theory.

Here we decide to work in the framework of \emph{derivators} (see \cite{grothendieck:derivateurs} for a comprehensive bibliography) which is particularly well-adapted for our purposes. Recall e.g.~from \cite{groth:ptstab} that a derivator is in a certain sense a minimal extension of a classical homotopy category or derived category to a framework which allows for a well-behaved calculus of homotopy (co)limits. More specifically, a derivator consists of derived categories of all diagram categories and restriction functors between them, and homotopy (co)limit functors are simply adjoints to these restriction functors. \emph{Stable} derivators enhance triangulated categories (see \cite{groth:ptstab}), and in that case all categories in sight can be endowed with canonical triangulations. For example, associated to a field $k$ there is the stable derivator $\D_k$ of unbounded chain complexes over~$k$, enhancing the classical derived category $D(k)$, and similarly $\D_R$ for an arbitrary ground ring~$R$.

A basic operation on derivators is given by shifting: associated to a derivator~\D and a small category $B$ we obtain the shifted derivator $\D^B$ (see \autoref{egs:prederivators} and \autoref{egs:der}), which is stable if~\D is. This operation amounts to considering \emph{coherent} diagrams of shape $B$ in the homotopy theory \D. Since a quiver~$Q$ is simply a graph and hence freely generates a category, this shifting operation applies to quivers. In the special case of the derivator $\D_k$ of a field $k$, it is immediate that the shifted derivator $\D_k^Q$ is equivalent to the stable derivator $\D_{kQ}$ associated to the path-algebra, and similarly for other ground rings.

Now, instead of asking two quivers $Q$ and $Q'$ to be derived equivalent, one could ask them to be \emph{strongly stably equivalent} in the sense that \emph{for all stable derivators} \D there are pseudo-natural equivalences of (stable) derivators
\[
\xymatrix{\D^Q\simeq\D^{Q'}.}
\]
(See \S\ref{sec:strong} for this definition and a brief discussion of examples of stable derivators.)
Such equivalences induce exact equivalences of the underlying triangulated categories, but, in general, having an equivalence of derivators is a stronger statement. Thus, being strongly stably equivalent is a much more restrictive assumption to be imposed on a pair of quivers. \emph{Nevertheless the aim of this paper and its sequels is to justify the claim that quite some classical examples of derived equivalent quivers/categories are in fact pairs of strongly stably equivalent quivers/categories.} 

We emphasize the added generality one obtains by showing that two derived equivalent quivers are actually strongly stably equivalent. Since the results are valid for every stable derivator they apply to the derivator~$\D_R$ associated to an arbitrary ground ring~$R$ (in particular, we obtain exact equivalences between the derived categories $D(RQ)$ and $D(RQ')$). The same result is also true for the stable derivator $\D_X$ of chain complexes of quasi-coherent $\mathcal{O}_X$-modules on a scheme~$X$. Moreover, the corresponding statement is true for the stable derivator $\D_A$ of differential-graded modules over a differential-graded algebra~$A$, for the stable derivator $\D_E$ of $E$-module spectra over a ring spectrum~$E$, and also for other stable derivators arising for example in equivariant stable, motivic stable or parametrized stable homotopy theory. Furthermore, these equivalences are compatible with various (co)induction and restriction of scalar functors, (Bousfield) localizations and colocalizations, and other exact morphisms (see again~\S\ref{sec:strong}).

In this paper we establish first examples of strongly stably equivalent quivers which can be obtained rather directly (as opposed to the abstract tilting results we obtain in \cite{gst:tree,gst:Dynkin-A,gst:acyclic} by more refined techniques). We begin by considering the Dynkin quivers of type $A$, and show that a reorientation of such a quiver yields a strongly stably equivalent quiver (\autoref{thm:An}). The proof is essentially an adaptation of the Waldhausen $S_\bullet$-construction of Algebraic K-Theory \cite{waldhausen:k-theory}. This construction can be recycled to also give a similar result for trees with at most one branching point. More precisely, an arbitrary reorientation of the edges of such a quiver gives rise to a strongly stably equivalent quiver (\autoref{thm:tiltone}). This includes, in particular, the Dynkin quivers of type $ADE$ as well as certain Euclidean graphs, thus generalizing results of Happel~\cite{happel:dynkin} and Ladkani~\cite{ladkani:universal-der-eq}.

As a special case of the latter result, we deduce that the source of valence $n$ and the sink of valence $n$ are strongly stably equivalent. This fact is also an immediate consequence of our discussion of (strongly) cartesian $n$-cubes in \S\ref{sec:cubes} (see \autoref{cor:tiltsourcesink}). In that section we generalize some basic results of Goodwillie \cite{goodwillie:II} to the framework of derivators. In particular, we show that an $n$-cube is strongly cartesian if and only if each subcube is cartesian, and that a coherent morphism of cartesian $n$-cubes is a cartesian $(n+1)$-cube. As an application we obtain an additional characterization of stability; a pointed derivator is stable if and only if the classes of strongly cartesian $n$-cubes and strongly cocartesian $n$-cubes coincide for all $n\geq 2$ (\autoref{cor:stable}).

The final aim of this paper is to begin a development of what we think of as \emph{abstract representation theory} for the commutative square, the trivalent source, and the trivalent sink. Note that, contrary to the other examples, the commutative square is not a free category (it is a quiver with a commutativity relation). We show these categories to be strongly stably equivalent. More interestingly, we reveal certain symmetries on related stable derivators which turn out to be rather useful. For example, these symmetries allow us to construct abstract versions of Serre functors and Auslander--Reiten translations for these quivers. The motivation for this terminology stems from the observation, that if we consider the trivalent source and the derivator of a commutative ground ring then these functors reduce to the classical ones. 

This certainly deserves a more systematic treatment, but here we just develop what is needed for our applications. We show that the trivalent source is relative fractionally Calabi--Yau of dimension $\frac 23$ (\autoref{thm:fracCYring}) and we obtain abstract versions of the Auslander--Reiten quivers in this case (\autoref{thm:ar-quiver-d4}). As a final application, we use our results to shed some additional light on May's axioms \cite{may:traces} for monoidal, triangulated categories which he used to prove additivity results for abstractly defined Euler characteristics. In \cite{kellerneeman:may}, these axioms were studied from a representation-theoretic perspective. May's axioms were also reconsidered in \cite{gps:additivity} where they are shown to be valid in any closed monoidal, stable derivator. Our contribution here is to relate the two approaches discussed in \cite{kellerneeman:may} and \cite{gps:additivity}. Inspired by \cite{kellerneeman:may} we show that some of May's axioms follow from statements that are a formal consequence of stability alone and do not depend on monoidal structure. Moreover, our representation theoretic perspective yields coherent and extended versions of May's braid axioms (see \S\ref{sec:square} and \S\ref{sec:May}).

In a way this paper is a continuation of the papers \cite{groth:ptstab} and \cite{gps:mayer}, and can be considered as a \emph{formal study of stability} (in a similar sense as the paper \cite{gps:additivity} can be thought of as a \emph{formal study of the interaction of stability and monoidal structure}). We expect this calculus which is valid in an arbitrary stable homotopy theory to be rich. In the sequels \cite{gst:tree,gst:acyclic} we continue this program and establish abstract reflection functors in the sense of \cite{gabriel:unzerlegbar,BGP:reflection,happel:fdalgebra} for arbitrary trees, acyclic quivers, and more general shapes, and we plan to come back to further aspects of this calculus somewhere else (see for example \cite{gst:Dynkin-A}).

The content of the sections is as follows. In \S\ref{sec:review} we recall without proof basic facts about derivators and establish some notation. In \S\ref{sec:sub} we briefly study sub(pre)derivators of shifted derivators defined by exactness properties and equivalences between them induced by Kan extensions. In \S\ref{sec:stable} we recall some basics about stable derivators and exact morphisms. In \S\ref{sec:strong} we define strongly stably equivalent quivers and categories, and collect a few examples of stable derivators to which the results of this paper and the sequels apply. In \S\ref{sec:An} we establish an abstract tilting result for Dynkin quivers of type~$A$, based on the $S_\bullet$-construction of Algebraic K-Theory. In~\S\ref{sec:onebranch} we extend this to a tilting result for trees with a unique branching point, including the Dynkin quivers of type $ADE$. In \S\ref{sec:cubes} we establish some properties of (strongly) cartesian $n$-cubes, and obtain an additional characterization of stability. In \S\ref{sec:square} we perform a few first steps in abstract representation theory of the commutative square and the source and the sink of valence three. In \S\ref{sec:May} we apply our results to shed some additional light on May's axioms.

\section{Review of derivators}
\label{sec:review}

In this section we recall the definition of a derivator and establish some basic notation and terminology. For more details and motivational remarks we refer to \cite{groth:ptstab}, its sequel \cite{gps:mayer}, and the many references therein. Let \cCat be the 2-category of small categories, and \cCAT the 2-category of not necessarily small categories. The terminal category with one object and its identity morphism only will be denoted by $\bbone$. There is an isomorphism of categories $A\cong A^\bbone$ and we write $a\colon\bbone\to A$ for the functor corresponding to $a\in A$. 

A \textbf{prederivator} is simply a 2-functor $\D\colon\cCat\op\to\cCAT.$ Morphisms of prederivators are pseudo-natural transformations and transformations of prederivators are modifications so that we obtain a 2-category $\cPDER$ of prederivators (see \cite{borceux1}). Given a functor $u\colon A\to B$, we write $u^*\colon\D(B) \to \D(A)$ for its image under the prederivator \D, and refer to it as \textbf{restriction functor} along $u$. In particular, associated to $a\colon\bbone\to A$ we obtain an \textbf{evaluation functor} $a^\ast\colon\D(A)\to\D(\bbone).$ Given a morphism $f\colon X\to Y$ in $\D(A)$, we denote its image under $a^\ast$ by $f_a\colon X_a\to Y_a.$ This is a morphism in the \textbf{underlying category} $\D(\bbone)$. 

The objects of $\D(A)$ are referred to as \textbf{(coherent) diagrams (of shape~$A$)} in \D, and this terminology is motivated by \autoref{egs:prederivators} and \autoref{egs:derivators}. The evaluation functors can be used to see that any coherent diagram $X\in\D(A)$ has an \textbf{underlying (incoherent) diagram} $A\to\D(\bbone)$. It is important to note that, in general, a coherent diagram is not determined by its underlying diagram, even not up to isomorphism. Put differently, the resulting functor $\D(A)\to\D(\bbone)^A$ is far from being an equivalence. Nevertheless, we say that a diagram has the form of its underlying diagram, which is then drawn in the usual way. 

\begin{egs}\label{egs:prederivators}
~ 
\begin{enumerate}
\item Any category \bC induces a \textbf{represented prederivator}~$y(\bC)$ defined by $y(\bC)(A) \coloneqq \bC^A.$ Its underlying category is equivalent to \bC itself. 
\item Suppose \bC is a \emph{Quillen model category} (see e.g.~\cite{quillen:ha,hovey:modelcats}), with class \bW of \emph{weak equivalences}. Then we obtain a prederivator $\ho(\bC)$ by formally inverting the pointwise weak equivalences $\ho(\bC)(A) \coloneqq (\bC^A)[(\bW^A)^{-1}]$. The underlying category of $\ho(\bC)$ is the homotopy category $\bC[\bW^{-1}]$ of \bC. 
\item Similarly, any \emph{$\infty$-category} $\cC$ (see e.g.~\cite{lurie:HTT,joyal:barca} or also \cite{groth:scinfinity}) has an underlying homotopy prederivator $\ho(\cC)(A) \coloneqq \mathrm{Ho}(\cC^{N(A)}).$ Here, $\mathrm{Ho}(-)$ denotes the usual homotopy category of an $\infty$-category and $N$ the nerve construction.
\item Given a prederivator \D and $B\in\cCat$ we can define the \textbf{shifted prederivator} $\shift\D B$ by $\shift\D B(A) \coloneqq \D(B\times A)$. Moreover, the \textbf{opposite prederivator} $\D\op$ is defined by $\D\op(A) \coloneqq \D(A\op)\op$. These two constructions are compatible with the previous three examples in that we have
\[
\shift{y(\bC)}{B} \cong y(\bC^B),\quad \shift{\ho(\bC)}{B} \cong \ho(\bC^B),\quad \shift{\ho(\cC)}{B} \cong \ho(\cC^B),
\]
and similarly
\[
y(\bC)\op \cong y(\bC\op),\quad \ho(\bC)\op \cong \ho(\bC\op),\quad \ho(\cC)\op \cong \ho(\cC\op).
\]
Shifting and the passage to opposites are compatible in the sense that we have $(\shift\D B)\op \cong \shift{(\D\op)}{B\op}$.
\end{enumerate}
\end{egs}

A prederivator is a \emph{derivator} if --besides two further easily motivated properties-- it `allows for a well-behaved calculus of Kan extensions'. Let \D be a prederivator, $u\colon A\to B$ a functor, and $u^\ast\colon\D(B)\to\D(A)$ the induced restriction functor. If $u^\ast$ admits a left adjoint $u_!\colon \D(A)\to\D(B)$ then we refer to it as a \textbf{left Kan extension}. Dually, right adjoints $u_\ast\colon\D(A)\to\D(B)$ are called \textbf{right Kan extensions}. Thus we follow the established terminology for~$\infty$-categories and refer to the functors as \emph{Kan extensions} as opposed to \emph{homotopy Kan extensions}. There is no risk of confusion since `categorical' Kan extensions are meaningless for an abstract derivator. Similarly, in the special case of $\pi\colon A\to B=\bbone$, we refer to $\pi_!=\colim_A$ as a \textbf{colimit functor} and to~$\pi_\ast=\lim_A$ as a \textbf{limit functor}. 

Recall from classical category theory, that in cocomplete categories left Kan extensions can be calculated pointwise by colimits, and dually (\cite[X.3.1]{maclane}). Axiom (Der4) in the following definition axiomatizes these pointwise formulas. For that purpose one has to consider the following special cases of comma squares
\[
\xymatrix{
(u/b)\ar[r]^-p\ar[d]_-\pi\drtwocell\omit{}&A\ar[d]^-u&&(b/u)\ar[r]^-q\ar[d]_-\pi&A\ar[d]^-u\\
\bbone\ar[r]_-b&B,&&\bbone\ar[r]_-b&B,\ultwocell\omit{}
}
\]
which come with canonical transformations $u\circ p\to b\circ\pi$ and $b\circ\pi\to u\circ q.$

\begin{defn}
  A \textbf{derivator} is a prederivator $\D\colon\cCat\op\to\cCAT$ with the following properties.
  \begin{itemize}[leftmargin=4em]
  \item[(Der1)] $\D\colon \cCat\op\to\cCAT$ takes coproducts to products.  In particular, $\D(\emptyset)$ is the terminal category.
  \item[(Der2)] For any $A\in\cCat$, a morphism $f\colon X\to Y$ is an isomorphism in $\D(A)$ if and only if the morphisms $f_a\colon X_a\to Y_a, a\in A,$ are isomorphisms in $\D(\bbone).$
  \item[(Der3)] Each functor $u^*\colon \D(B) \to\D(A)$ has both a left adjoint $u_!$ and a right adjoint $u_*$.
  \item[(Der4)] For any functor $u\colon A\to B$ and any $b\in B$ the canonical transformation
\[ 
\pi_! p^* \to \pi_! p^* u^* u_! \to \pi_! \pi^* b^* u_! \to b^* u_!
\]
is an isomorphism as is the transformation
\[
b^* u_* \to \pi_* \pi^* b^* u_* \to \pi_* q^* u^* u_* \to \pi_* q^*.
\]
  \end{itemize}
\end{defn}

The axioms (Der1) and (Der3) together imply that $\D(A)$ has small categorical coproducts and products, hence, in particular, initial objects and final objects. These are the only actual 1-categorical (co)limits which must exist in each derivator. This seemingly abstract concept of a derivator encodes key properties of the calculus of (homotopy) Kan extensions which is available in model categories, $\infty$-categories, and ordinary categories, and thus provides a tool to study such examples. Let us again take up the \autoref{egs:prederivators}.

\begin{egs}\label{egs:der}
~ 
\begin{enumerate}
\item A prederivator~$y(\bC)$ is a derivator, called \textbf{represented derivator}, if and only if the category \bC is both complete and cocomplete. In this case $u_!$ and $u_*$ are the usual Kan extension functors.
\item If \bC is an \emph{arbitrary} model category then $\ho(\bC)$ is a derivator, its \textbf{homotopy derivator} (see~\cite{cisinski:idcm} for the general case and \cite{groth:ptstab} for an easy proof in the case of combinatorial model categories). The functors $u_!$ and $u_*$ are derived versions of the functors of $y(\bC)$. 
\item If $\cC$ is a complete and cocomplete $\infty$-category then there is the \textbf{homotopy derivator} $\ho(\cC)$ of $\cC$ (see~\cite{gps:mayer} for a sketch proof).
\item If \D is derivator then so are $\D\op$ and $\D^B$ for $B\in\cCat$ (see \cite[Theorem~1.25]{groth:ptstab}).
\end{enumerate}
\end{egs}

More specific examples are mentioned in \autoref{egs:derivators}. The axioms of a derivator suffice to extend many key features of the ordinary calculus of Kan extensions to the homotopical context. This is based on the theory of \emph{homotopy exact square}, which is arguable the main tool in the study of derivators. Given a derivator \D and a natural transformation living in a square
\begin{equation}
  \vcenter{\xymatrix{
      D\ar[r]^p\ar[d]_q \drtwocell\omit{\alpha} &
      A\ar[d]^u\\
      B\ar[r]_v &
      C
    }}\label{eq:hoexactsq'}
\end{equation}
of small categories, we obtain canonical mate-transformations
\begin{gather}
  q_! p^* \to q_! p^* u^* u_! \xto{\alpha^*} q_! q^* v^* v_! \to v^* u_!  \mathrlap{\qquad\text{and}}\label{eq:hoexmate1'}\\
  u^* v_* \to p_* p^* u^* v_* \xto{\alpha^*} p_* q^* v^* v_* \to p_* q^*\label{eq:hoexmate2'}
\end{gather}
of which one is an isomorphism if and only if the other is. A square \eqref{eq:hoexactsq'} is \textbf{homotopy exact} if and only if the canonical mates \eqref{eq:hoexmate1'} and \eqref{eq:hoexmate2'} are isomorphisms. (Note that if one only asks this to be the case for represented derivators, then one obtains those squares which \emph{satisfy the Beck--Chevalley condition}. Such squares are also referred to as \textbf{Guitart exact squares}; see for example~\cite{guitart:rel-car,guitart:contractible}.)

By definition of derivators, certain comma squares are homotopy exact, and one can establish many more examples. Here, we only collect the examples of constant use in this paper, but see also \cite{ayoub:12,maltsiniotis:exact,groth:ptstab,gps:mayer}.

\begin{egs}\label{egs:htpy}
~ 
\begin{enumerate}
\item If $u\colon A\to B$ is fully faithful, then the square $\id\circ u=\id \circ u$ is homotopy exact. Thus, the canonical maps $\eta\colon\id\to u^\ast u_!$ and $\epsilon\colon u^\ast u_\ast\to\id$ are isomorphisms, i.e., Kan extension along fully faithful functors are fully faithful; see \cite[Proposition~1.20]{groth:ptstab}.
\item For functors $u\colon A\to B$ and $v\colon C\to D$ the squares
\begin{equation}
\vcenter{
\xymatrix{
A\times C\ar[r]^-{u\times\id}\ar[d]_-{\id\times v}&B\times C\ar[d]^-{\id\times v}\\
A\times D\ar[r]_-{u\times\id}&B\times D
}}
\end{equation}
are homotopy exact; see \cite[Proposition~2.5]{groth:ptstab}. Thus, Kan extensions in one variable commute with restriction in an unrelated variable.\label{eq:exactff}
\item If $u\colon A\to B$ is a right adjoint functor, then the square
\[
\xymatrix{
A\ar[r]^-u\ar[d]_-{\pi_A}&B\ar[d]^-{\pi_B}\\
\bbone\ar[r]_-\id&\bbone
}
\]
is homotopy exact (\cite[Proposition~1.18]{groth:ptstab}). Thus, the canonical map $(\pi_A)_!u^\ast\to(\pi_B)_!$ is an isomorphism, which is referred to by saying that the functor $u$ is \textbf{homotopy cofinal}.
\item The passage to mate-transformations is functorial with respect to horizontal and vertical pasting. Consequently, horizontal and vertical pastings of homotopy exact squares are homotopy exact (see \cite[Lemma~1.14]{groth:ptstab}). 
\end{enumerate}
\end{egs}

Combining the 2-out-of-3 property of isomorphisms with the above examples and the compatibility with respect to pasting allows one to establish many key facts about the calculus of Kan extensions in the framework of derivators. 

We close this section with a short discussion of morphisms of derivators and, in particular, parametrized versions of Kan extensions. A \textbf{morphism} of derivators is simply a morphism of underlying prederivators and similarly for \textbf{transformations}. Thus, we define the 2-category $\cDER$ of derivators as the full sub-2-category of $\cPDER$ spanned by the derivators. \autoref{egs:der} can be refined to statements including morphisms. 

An \textbf{adjunction} or \textbf{equivalence} of derivators is an adjunction or equivalence internal to the 2-category of derivators. One observes that a morphism of derivators $F\colon\D\to\E$ is an equivalence if and only if this is the case for each component $F_A\colon\D(A)\to\E(A)$ (see \cite[Proposition~2.9]{groth:ptstab}). Let us say that a morphism of prederivators is \textbf{fully faithful} or \textbf{essentially surjective} if each component has the respective property. Thus, a morphism of derivators is an equivalence if and only if it is fully faithful and essentially surjective.

We recall that a morphism $F$ \textbf{preserves left Kan extensions along $u\colon A\to B$} if the canonical mate-transformation
\[
u_!\circ F_A\to u_!\circ F_A\circ u^\ast\circ u_!\stackrel{\cong}{\to}u_!\circ u^\ast\circ F_B\circ u_!\to F_B\circ u_!
\]
is an isomorphism. A morphism is \textbf{cocontinuous} if it preserves left Kan extensions along all functors $u\colon A\to B$. There are dual notions for right Kan extensions and, in particular, the notion of a \textbf{continuous} morphism. In all these cases it can be shown that a morphism has the respective property if and only if this is the case for all of its shifts (\cite[Corollary~2.7]{groth:ptstab}). For represented derivators these notions reduce to the classical ones of functors preserving certain Kan extensions. 

\begin{lem}[{\cite[Proposition~2.9]{groth:ptstab}}]\label{lem:adj}
A morphism $F\colon\D\to\E$ of derivators is a left adjoint if and only if each component $F_A\colon\D(A)\to\E(A)$ is a left adjoint and $F$ is cocontinuous.
\end{lem}

Examples of adjunctions come from adjunctions between bicomplete categories and Quillen adjunctions. In this paper it will be central to note that also Kan extensions themselves exist in a parametrized form, so that they also induce adjunctions between derivators. Given a functor $u\colon A\to B$, then there is a morphism of derivators $u^\ast\colon\D^B\to\D^A$. By (Der3) this morphism admits levelwise adjoints, and one checks that $u^\ast$ preserves Kan extensions (\cite[Prop.~2.5]{groth:ptstab}). It follows that left and right Kan extensions can be lifted to adjunctions of derivators,
\[
(u_!,u^\ast)\colon\D^A\rightleftarrows\D^B\qquad\text{and}\qquad
(u^\ast,u_\ast)\colon\D^B\rightleftarrows\D^A.
\]
In particular, $u_!,u^\ast$ are cocontinuous and $u^\ast,u_\ast$ are continuous.

We will also use the following abuse of language. If \D is a derivator, then we write $X\in\D$ in order to indicate that there is a category $A\in\cCat$ such that $X\in\D(A)$. We will also use a similar notation in the context of a sub(pre)derivator $\D'\subseteq\D$.

\section{Subderivators defined by exactness conditions}
\label{sec:sub}

In this section we briefly study sub(pre)derivators obtained from shifted derivators by imposing exactness conditions. In particular, this will be used to conclude that Kan extensions along fully faithful functors induce equivalences between such sub(pre)derivators. These rather obvious statements are collected here for the convenience of the reader: in later sections, suitable iterations of such Kan extensions witness that certain quivers/categories are strongly stably equivalent. We suggest that the reader skips this section and only refers back to it as needed.

\begin{lem}
Let $F\colon\D\to\E$ be a morphism of prederivators and, for $A\in\cCat$, let $\E'(A)\subseteq\E(A)$ be the full subcategory given by the essential image of $F_A.$ Then for every $u\colon A\to B$ there is a commutative square
\[
\xymatrix{
\E'(B)\ar@{-->}[d]\ar[r]&\E(B)\ar[d]^-{u^\ast}\\
\E'(A)\ar[r]&\E(A).
}
\]
This defines a prederivator $\cIm(F)=\E'$, the \textbf{image} of $F$, and an \textbf{image factorization} $F\colon\D\to\cIm(F)\to\E$.
\end{lem}
\begin{proof}
This is immediate since for every $X\in\E'(B)$ there exists an object $Y\in\D(B)$ together with isomorphisms
\[
u^\ast(X)\cong u^\ast(F_B(Y))\cong F_A(u^\ast(Y))
\]
where the second isomorphism is the coherence isomorphism belonging to~$F$.
\end{proof}

\begin{lem}
Let $F\colon \D\to\E$ be a morphism of prederivators which is levelwise an equivalence. If $\D$ is a derivator, then so is $\E$.
\end{lem}

\begin{cor}
Let $F\colon\D\to\E$ be a morphism of prederivators which is levelwise fully faithful and let \D be a derivator. The image $\cIm(F)$ is again a derivator and the induced morphism $\D\to\cIm(F)$ is an equivalence of derivators.
\end{cor}

As a special case we obtain the following result.

\begin{cor}\label{cor:kaneq}
Let \D be a derivator and let $u\colon A\to B$ be fully faithful. The image $u_!(\D^A)$ of $u_!\colon\D^A\to\D^B$ is a derivator and $u_!$ restricts to an equivalence of derivators $u_!\colon\D^A\to u_!(\D^A)$.
\end{cor}

For a refined version of this, let us recall the cocone construction. Given a small category $A$, the \textbf{cocone} $A^\rhd$ of~$A$ is the category which is obtained from $A$ by adjoining a new terminal object $\infty$. The cocone construction $(-)^\rhd$ is obviously functorial in~$A$, and there is a natural fully faithful inclusion $i=i_A\colon A\to A^\rhd$. Objects of $\D(A^\rhd)$ are \textbf{(coherent) cocones}, which are \textbf{colimiting} if they lie in the essential image of the left Kan extension $i_!\colon\D(A)\to\D(A^\rhd)$. 

Let us now consider a collection of small cocones and small cones in~$A$, say

\begin{equation}
\vcenter{
\xymatrix{
\cI=\{f_i\colon D_i^\rhd\to A, i\in I\}&\text{and}&\cP=\{g_j\colon E_j^\lhd\to A, j\in J\}.
}\label{eq:cocones}
}
\end{equation}
For a derivator \D and a small category $C$ let
\[
\D^{A,\cI,\cP}(C)\subseteq\D^A(C)
\]
be the full subcategory spanned by the diagrams $X\in\D^A(C)$ such that the cocones $f_i^\ast(X)\in\D^{D_i^\rhd}(C)$ are colimiting and such that the cones $g_j^\ast(X)\in\D^{E_j^\lhd}(C)$ are limiting. Recall from \cite[Corollary~2.6]{groth:ptstab} that this can be checked pointwise. For example, $f_i^\ast(X)$ is colimiting if and only if $f_i^\ast(X)_c\in\D(D_i^\rhd)$ is colimiting for each $c\in C$.

\begin{prop}
$\D^{A,\cI,\cP}\colon C\mapsto \D^{A,\cI,\cP}(C)$ defines a subprederivator of $\D^A$.
\end{prop}
\begin{proof}
We have to check that for $v\colon C\to C'$ the functor $v^\ast\colon\D^A(C')\to\D^A(C)$ restricts to the respective subcategories. Thus, let $X\in\D^{A,\cI,\cP}(C')$ and let us consider one of the cocones $f_i\colon D_i^\rhd\to A$. By assumption $f_i^\ast(X)$ lies in the essential image of the left Kan extension along the fully faithful functor $D_i\to D_i^\rhd.$ But this is equivalent to the fact that $f_i^\ast(X)_{c'}$ is colimiting for all $c'\in C'$, hence this is, in particular, the case for 
\[
f_i^\ast(X)_{v(c)}=v^\ast f_i^\ast(X)_c=f_i^\ast (v^\ast(X))_c,\quad c\in C.
\]
Using \cite[Corollary~2.6]{groth:ptstab} once more we deduce that $f_i^\ast(v^\ast(X))$ is colimiting. This proves that $v^\ast$ restricts to a functor $\D^{A,\cI,\cP}(C')\to \D^{A,\cI,\cP}(C)$, and we thus have a prederivator $\D^{A,\cI,\cP}$. 
\end{proof}

In the context of $\cI,\cP$ as in \eqref{eq:cocones} and a functor $u\colon A\to B$, we denote by $u(\cI),u(\cP)$ the collections of (co)cones obtained from $\cI,\cP$ by composition with~$u$. There is the following refinement of \autoref{cor:kaneq}.

\begin{prop}\label{prop:kaneq}
Let \D be a derivator, $A\in\cCat$, and let $u\colon A\to B$ be fully faithful. The equivalence $u_!\colon\D^A\to u_!(\D^A)$ restricts to an equivalence of prederivators
\[
u_!\colon\D^{A,\cI,\cP}\to u_!(\D^A)\cap \D^{B,u(\cI),u(\cP)}.
\]
In particular, if $\D^{A,\cI,\cP}$ is actually a \emph{derivator}, then so is $u_!(\D^A)\cap \D^{B,u(\cI),u(\cP)}$ and $u_!$ is an equivalence of derivators.
\end{prop}
\begin{proof}
This is obvious.
\end{proof}

We are mostly interested in this result in the case that the image $u_!(\D^A)$ can be described by certain exactness conditions, i.e., in the case that $u_!(\D^A)=\D^{B,\cI',\cP'}$. Under these assumptions, if $\D^{A,\cI,\cP}$ is a \emph{derivator}, then $u_!(\D^{A,\cI,\cP})$ is again a derivator defined by exactness conditions. In fact, this image derivator is given by imposing the exactness conditions $u(\cI)\cup\cI',u(\cP)\cup\cP'$, and we thus obtain an equivalence of derivators
\[
u_!\colon \D^{A,\cI,\cP}\stackrel{\simeq}{\to}\D^{B,u(\cI)\cup\cI',u(\cP)\cup\cP'}.
\]

\begin{rmk}\label{rmk:kaneq}
There is also the following variant of these results. Instead of asking certain (co)cones to be (co)limiting (co)cones, one can consider sets of functors
\begin{equation}
\vcenter{
\xymatrix{
\cI=\{D_i\stackrel{u_i}{\to} D'_i\stackrel{f_i}{\to} A, i\in I\}&\text{and}&
\cP=\{E_j\stackrel{v_j}{\to} E'_j\stackrel{g_j}{\to} A, j\in J\},
}\label{eq:relcocones}
}
\end{equation}
with $u_i$ and $v_j$ fully faithful functors. In that case we obtain a full subprederivator $\D^{A,\cI,\cP}\subseteq\D^A$ spanned by diagrams~$X$ such that $f_i^\ast(X)$ lies in the essential image of $(u_i)_!$ and such that $g_j^\ast(X)$ lies in the essential image of $(v_j)_\ast$. A fully faithful functor $u\colon A\to B$ induces then similar equivalences as in \autoref{prop:kaneq}.
\end{rmk}

In the remainder of this paper we use the results of this section without further reference.

\section{Stable derivators and exact morphisms}
\label{sec:stable}

We give a brief review of pointed and stable derivators, and indicate how to define functors like suspensions and loops, cofibers and fibers in this context. The main aim of this paper and its sequels is to use more refined combinations of similar Kan extensions in order to establish abstract tilting results.

\begin{defn} 
A derivator \D is \textbf{pointed} if $\D(\bbone)$ has a zero object. 
\end{defn}

Derivators represented by pointed categories are pointed as are homotopy derivators of pointed model categories and pointed $\infty$-categories. If $\D$ is pointed then so are $\D^B$ and $\D\op$. It follows that the categories $\D(A)$ have zero objects which are preserved by restriction and Kan extension functors. 

In the pointed context, important functors are given by \emph{left} or \emph{right extensions by zero}. These are special cases of Kan extensions along inclusions of \emph{cosieves} and \emph{sieves}, respectively.
Recall that $u\colon A\to B$ is a \textbf{sieve} if it is fully faithful, and for any morphism $b\to u(a)$ in $B$, there exists an $a'\in A$ with $u(a')=b$. Dually, there is the notion of a \textbf{cosieve}, and we know already that Kan extensions along (co)sieves are fully faithful.

\begin{lem}[{\cite[Prop.~1.23]{groth:ptstab}}]\label{lem:extbyzero}
Let \D be a pointed derivator and let $u\colon A\to B$ be a sieve. Then $u_\ast\colon\D^A\to\D^B$ induces an equivalence onto the full subderivator of $\D^B$ spanned by all diagrams $X\in\D^B$ such that $X_b$ is zero for all $b\notin u(A)$.
\end{lem}
\noindent
The functor $u_\ast$ gives us \textbf{right extension by zero}, and dually there are \textbf{left extensions by zero}.

In the framework of pointed derivators one can define \textbf{suspensions} and \textbf{loops}, \textbf{cofibers} and \textbf{fibers}, and similar constructions. In particular, we have adjunctions of derivators
\[
(\Sigma,\Omega)\colon\D\rightleftarrows\D\quad\text{and}\quad
(\cof,\fib)\colon\D^{[1]}\rightleftarrows\D^{[1]},
\]
where $[1]$ is the category $(0\to 1)$ with two objects and a unique non-identity morphism.
In order to sketch the construction of the suspension $\Sigma$ and the cofiber $\cof$, let us consider the category $\square=[1]^2=[1]\times[1]$,
\[\vcenter{\xymatrix@-.5pc{
    (0,0)\ar[r]\ar[d] &
    (1,0)\ar[d]\\
    (0,1)\ar[r] &
    (1,1).
  }}\]
Let $\ulcorner$ and $\lrcorner$ be the full subcategories obtained by removing $(1,1)$ and $(0,0)$, respectively. These categories come with inclusions $i_\ulcorner\colon \mathord{\ulcorner} \into \square$ and $i_\lrcorner\colon \mathord{\lrcorner} \into \square$.
Since $i_\ulcorner$ is fully faithful, so is~$(i_\ulcorner)_!$ and an object $Q\in\D(\square)$ is \textbf{cocartesian} if it lies in the essential image of $(i_\ulcorner)_!$. Dually, the \textbf{cartesian} squares are precisely the ones in the essential image of $(i_\lrcorner)_\ast$. Given a coherent morphism $f\colon x\to y\in\D([1])$, then in order to define the cofiber $\cof(f)$ one restricts the cocartesian diagram
\[
\xymatrix{
x\ar[r]^-f\ar[r]\ar[d]&y\ar[d]^-{\cof(f)}\\
0\ar[r]&z
}
\]
which in turn is obtained from~$f$ by a right extension by zero followed by a left Kan extension. Similarly, the suspension $\Sigma x$ is the target of $\cof(f\colon x\to 0).$ Here, $x\to 0$ is obtained from $x$ by a right extension by zero. Thus the defining cocartesian square looks like
\[
\xymatrix{
x\ar[r]\ar[r]\ar[d]&0\ar[d]\\
0\ar[r]&\Sigma x.
}
\]
Finally, \textbf{(coherent) cofiber sequences} are coherent diagrams
\[
\xymatrix{
x\ar[r]\ar[d]&y\ar[r]\ar[d]&0\ar[d]\\
0\ar[r]&z\ar[r]&w
}
\]
such that the two squares are cocartesian and such that the two objects vanish. An arbitrary morphism $(x\to y)\in\D([1])$ can be extended to a cofiber sequence by a right extension by zero followed by a left Kan extension. Since the compound square is cocartesian (\cite[Corollary~4.10]{gps:mayer}), there is a natural isomorphism $\Sigma x\toiso w.$ In particular, using this isomorphism we see that every cofiber sequence induces an underlying \textbf{\emph{incoherent} cofiber sequence}
\[
x\to y\to z\to \Sigma x,
\]
which is an ordinary diagram in $\D(\bbone).$

\begin{defn}
A pointed derivator is \textbf{stable} if the classes of cocartesian squares and cartesian squares coincide. These squares are then called \textbf{bicartesian}.
\end{defn}
\noindent
It can be shown that for a pointed derivator stability can also be characterized by the fact that $(\cof,\fib)$ or $(\Sigma,\Omega)$ is an equivalence (\cite[Theorem~7.1]{gps:mayer}). We will obtain an additional reformulation in \autoref{cor:stable}. The homotopy derivators of stable model categories and stable $\infty$-categories are stable. If \D is stable then so is the shifted derivator $\D^B$ as is $\D\op$. Recall that a derivator is \textbf{strong} if it satisfies:
  \begin{itemize}[leftmargin=4em]
  \item[(Der5)] For any $A$, the induced functor $\D(A\times [1]) \to \D(A)^{[1]}$ is full and essentially surjective.
  \end{itemize}
This property does not play an essential role in the \emph{theory} of derivators. It is only used if one wants to relate \emph{properties} of stable derivators to the existence of \emph{structure} on its values. Represented derivators and homotopy derivators associated to model categories or $\infty$-categories are strong as are shifts and opposites of strong derivators. The above-mentiond incoherent cofiber se	quences can be used to prove the following.

\begin{thm}[{\cite[Theorem~4.16]{groth:ptstab}}]\label{thm:triang}
Let \D be a strong, stable derivator. Then $\D(A)$ admits a (canonical) triangulation. 
\end{thm}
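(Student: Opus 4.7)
The plan is to reduce to the case of the underlying category $\D(\bbone)$ by noting that for any $A \in \cCat$ the shifted derivator $\D^A$ is again strong and stable, so one obtains a triangulation on $\D(A) = \D^A(\bbone)$ by the same argument. The suspension functor $\Sigma \colon \D(\bbone) \to \D(\bbone)$ is an autoequivalence since $(\Sigma, \Omega)$ is an adjoint equivalence by stability, as mentioned in the excerpt. I would declare a triangle $x \to y \to z \to \Sigma x$ to be \textbf{distinguished} if it is isomorphic, as an incoherent diagram in $\D(\bbone)$, to the underlying incoherent cofiber sequence of some coherent cofiber sequence in $\D([1] \times [2])$ obtained from a coherent morphism in $\D([1])$ by right extension by zero followed by a left Kan extension, precisely as recalled in the excerpt.

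To verify (TR1), the trivial triangle $x \xrightarrow{\id} x \to 0 \to \Sigma x$ is the underlying sequence of the coherent cofiber sequence extending $\id_x$. Completion of an arbitrary $f \colon x \to y$ in $\D(\bbone)$ to a distinguished triangle requires lifting $f$ to a coherent morphism, which is possible by axiom (Der5) since the comparison functor $\D([1]) \to \D(\bbone)^{[1]}$ is essentially surjective; closure of distinguished triangles under isomorphism is built into the definition. For the rotation axiom (TR2), I would iterate right-extensions by zero and left Kan extensions to extend a coherent cofiber sequence to a longer Barratt--Puppe-style grid; each adjacent $2 \times 2$ subsquare is bicartesian by stability, and pasting identifies the next bicartesian square as witnessing the rotated triangle $y \to z \to \Sigma x \to \Sigma y$, with the sign conventions absorbed in the canonical identification of $\Sigma$ from two vertically stacked bicartesian squares. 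The inverse rotation follows by running the argument in the strong stable derivator $\D\op$.

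For (TR3), given distinguished triangles represented by coherent cofiber sequences $X, X' \in \D([1] \times [2])$ and compatible morphisms $f \colon x \to x'$ and $g \colon y \to y'$ in $\D(\bbone)$, first use (Der5) to lift the commutative square $(f,g)$ to a coherent square in $\D([1]^2)$, and then extend that coherent square to a coherent morphism $X \to X'$ using that the left-hand $[1] \times [1]$ subsquare determines the rest of the coherent cofiber sequence diagram by Kan extensions. Restriction to the cofiber column produces the desired $h \colon z \to z'$. For the octahedral axiom (TR4), I would start with a composable pair $x \to y \to z$, lift it by repeated application of (Der5) to a coherent object of $\D([2])$, and then extend it by right extensions by zero together with iterated left Kan extensions to a $3 \times 3$-grid of bicartesian squares; the three cofiber sequences read off from the rows of this grid, together with the canonical comparison morphisms between them, assemble into the octahedron.

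The main obstacle is that (Der5) is only full and essentially surjective, not faithful, so lifts of incoherent data to coherent data are non-unique. One must therefore argue carefully in (TR3) and (TR4) that \emph{some} admissible lift produces the required completion rather than expecting uniqueness, and in particular that the resulting morphism in $\D(\bbone)$ is independent of the bookkeeping choices up to the ambiguity allowed by the axioms of a triangulated category. The pasting calculus of homotopy exact squares reviewed in the excerpt is used throughout, and the key technical input is that pastings of bicartesian squares are bicartesian, as in \cite[Corollary~4.10]{gps:mayer}, which underpins both the rotation argument and the construction of the octahedron.
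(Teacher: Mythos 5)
The paper does not contain a proof of this statement; it is cited from the earlier reference \cite[Theorem~4.16]{groth:ptstab}. Your sketch follows the same overall strategy as that reference: distinguished triangles defined as underlying diagrams of coherent cofiber sequences, (TR1) and (TR3) handled via the essential surjectivity and fullness in (Der5), (TR2) via the Barratt--Puppe extension with the inverse rotation obtained from $\D\op$, and (TR4) via a grid of bicartesian squares.

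The one genuine gap is in your treatment of (TR4). You claim the composable pair $x\to y\to z$ can be lifted to a coherent object of $\D([2])$ ``by repeated application of (Der5).'' But (Der5) only concerns the functors $\D(A\times[1])\to\D(A)^{[1]}$, and since $[2]$ is not of the form $A\times[1]$, no direct iteration of this axiom yields essential surjectivity of $\D([2])\to\D(\bbone)^{[2]}$. This lifting is a nontrivial intermediate lemma that must be established separately. One route: lift $f$ to $F\in\D([1])$ by essential surjectivity; then use fullness of (Der5) over $\bbone$ to lift the trivially commuting incoherent square with verticals $gf$ and $g$ to a morphism $F\to\pi^\ast z$ in $\D([1])$; then apply (Der5) over $A=[1]$ to lift this morphism to an object of $\D([1]^2)$; finally restrict along a staircase $[2]\to[1]^2$. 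Equivalently, invoke the general fact (derivable from (Der5)) that a strong derivator admits such partial lifts along any finite free category — this is essentially how the cited reference proceeds. You should flag this as a lemma rather than an application of the axiom as stated. A secondary imprecision: the sign making the rotated triangle end in $-\Sigma f$ rather than $\Sigma f$ is a real consequence of how the compound of two vertically stacked bicartesian squares is identified with $\Sigma$, and dismissing it as ``absorbed'' glosses over one of the more delicate parts of the argument.
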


To make precise one way in which these triangulations are canonical, let us recall that a morphism between stable derivators is \textbf{exact} if it preserves zero objects and bicartesian squares. 

\begin{prop}[{\cite[Proposition~4.18]{groth:ptstab}}]\label{prop:canonical}
Let \D and \E be strong, stable derivators and let $F\colon\D\to\E$ be exact. Then $F_A\colon\D(A)\to\E(A)$ can be turned into an exact functor with respect to the triangulations of \autoref{thm:triang}.  
\end{prop}

This applies, in particular, to the morphisms $u_!,u^\ast,u_\ast$ induced by a functor $u\colon A\to B$ showing that the triangulations at the various levels as guaranteed by \autoref{thm:triang} are compatible. Similarly, one can show that a natural transformation between such functors induces exact transformations between exact functors of triangulated categories. 

An additional result reinforcing the correctness of the above definition of an exact morphism of stable derivators can be found in \cite{ps:linearity}. The result shows that exact morphisms preserve sufficiently finite (co)limits. 

%

In later sections we will establish abstract tilting results for arbitrary stable derivators. For this purpose it is important to be able to recognize bicartesian squares in larger diagrams. In many cases where it seems obvious that certain squares are bicartesian, a formal justification is provided by the following recognition result of Franke \cite{franke:adams}.

\begin{lem}[{\cite[Prop.~3.10]{groth:ptstab}}]\label{lem:detection}
  Suppose $u\colon C\to B$ and $v\colon [1]^2 \to B$ are functors, with $v$ injective on objects, and let $b = v(1,1)\in B$.
  Suppose furthermore that $b\notin u(C)$, and that the functor $\mathord\ulcorner \to (B\setminus b)/b$ induced by $v$ has a left adjoint.
  Then for any derivator \D and any $X\in \D(C)$, the square $v^* u_! X$ is cocartesian.
\end{lem}

Although this result looks a bit technical, it turns out to be very useful in specific situations. We will also need the following variant for arbitrary \emph{cocones}. 

\begin{lem}[{\cite[Lemma~4.5]{gps:mayer}}]\label{lem:detectionplus}
Let $A\in\cCat$, and let $u\colon C\to B$, $v\colon A^\rhd\to B$ be functors.
Suppose that there is a full subcategory $B'\subseteq B$ such that
\begin{enumerate}
\item $u(C) \subseteq B'$ and $v(\infty) \notin B'$,
\item $vi\colon A\to B$ factors through the inclusion $B'\subseteq B$, and
\item the functor $A \to B' / v(\infty)$ induced by $v$ has a left adjoint.
\end{enumerate}
Then for any derivator \D and any $X\in\D(C)$, the diagram $v^* u_! X$ is in the essential image of $i_!$.
In particular, $(v^* u_! X)_\infty$ is the colimit of $i^* v^* u_! X$.
\end{lem}

\section{Strongly stably equivalent categories}
\label{sec:strong}

In this section we introduce strongly stably equivalent categories, and mention examples of stable derivators to which the abstract tilting results will apply. In this paper, our results are mostly about quivers, hence let us be precise about this notion. A quiver $Q$ is a quadruple $(Q_0,Q_1,s,t)$ consisting of a set of vertices $Q_0$, a set of arrows $Q_1$, and two maps $s,t\colon Q_1\to Q_0$, called the source map and target map, respectively. In other words, a quiver is simply an oriented graph and we hence allow loops at a vertex or multiple arrows between two vertices. A quiver $Q=(Q_0,Q_1,s,t)$ is finite if $Q_0$ and $Q_1$ are finite sets. In what follows we often identify a quiver with the category freely generated by it.

Let $Q$ and $Q'$ be quivers and let $k$ be a field. Recall that $Q$ and $Q'$ are \textbf{derived equivalent} (over~$k$) if the path-algebras $kQ$ and $kQ'$ are derived equivalent, i.e., if there is an exact equivalence of derived categories
$$D(kQ)\stackrel{\Delta}{\simeq}D(kQ').$$
The main aim of this paper and its sequels is to show that in quite some classical cases of derived equivalent quivers we are actually given strongly stably equivalent quivers in a sense we make precise now.

Since identity morphisms are exact and compositions of exact morphisms are again exact, we can consider the $2$-category $\cDER_{\mathrm{St},\mathrm{ex}}\subseteq\cDER$ of stable derivators, exact morphisms, and arbitrary transformations. Note that the shifting operation defines a $2$-functor
\[
\cCat\op\times\cDER\to\cDER\colon (A,\D)\mapsto \D^A.
\]
Hence, for every $A\in\cCat$ we obtain an induced $2$-functor $(-)^A\colon\cDER\to\cDER$ which can be restricted to
\[
(-)^A\colon\cDER_{\mathrm{St},\mathrm{ex}}\to\cDER.
\]

\begin{defn}
Two small categories $A$ and $A'$ are \textbf{strongly stably equivalent}, in notation $A\sse A'$, if there is a pseudo-natural equivalence between the $2$-functors
\[
\Phi\colon(-)^A\simeq (-)^{A'}\colon\cDER_{\mathrm{St},\mathrm{ex}}\to\cDER.
\]
We call such a pseudo-natural equivalence a \textbf{strong stable equivalence}.
\end{defn}

Thus, a strong stable equivalence $\Phi\colon A\sse  A'$ consists of the following datum. 
\begin{enumerate}
\item For \emph{every} stable derivator \D there is an equivalence of derivators 
\[
\Phi_\D\colon\D^A\simeq\D^{A'}.
\]
\item Moreover, for every exact morphism of stable derivators $F\colon\D\to\E$ there is a natural isomorphism 
$\gamma_F\colon F\circ \Phi_\D\to \Phi_E\circ F,$
\[
\xymatrix{
\D^A\ar[r]^-{\Phi_\D}_-\simeq\ar[d]_-F\drtwocell\omit{\cong}&\D^{A'}\ar[d]^-F\\
\E^A\ar[r]^-\simeq_-{\Phi_{\E}}&\E^{A'}.
}
\]
\end{enumerate}
This datum is supposed to satisfy the usual coherence properties of a pseudo-natural transformation.

Before we comment on this notion, we collect the following two facts, and begin with some obvious closure properties.

\begin{lem}\label{lem:closure}
Let $A,A'$, $B,B'$, and $A_i,A_i',i\in I,$ be small categories.
\begin{enumerate}
\item The relation of `being strongly stably equivalent' $\sse$ defines an equivalence relation. 
\item Equivalent categories are strongly stably equivalent.
\item If $A\sse A'$ and $B\sse B'$, then $A\times B\sse A'\times B'$.
\item If $A_i\sse A_i'$ for $i\in I$, then $\bigsqcup A_i\sse \bigsqcup A_i'$.
\end{enumerate}
\end{lem}
\begin{proof}
The first two are obvious and (iv) follows directly from (Der1). To establish~(iii), let us assume that~\D is a stable derivator. Since stable derivators are stable under shifts, by assumption there is a chain of pseudo-natural equivalences
\[
\D^{A\times B}\cong(\D^A)^B\simeq(\D^A)^{B'}\cong(\D^{B'})^A\simeq (\D^{B'})^{A'}\cong\D^{A'\times B'}.
\]
But this implies that $A\times B$ and $A'\times B'$ are also strongly stably equivalent.
\end{proof}

On the other hand, classical results from representation theory provide us with a non-trivial necessary condition for quivers to be strongly stably equivalent.

\begin{prop} \label{prop:strongly-equiv-necessary}
Let $Q$ and $Q'$ be finite quivers without oriented cycles. Then $Q\sse Q'$ only if the underlying non-oriented graphs of $Q$ and $Q'$ are isomorphic.
\end{prop}

\begin{proof}
Let $k$ be a field. It follows from Happel's results~\cite{happel:fdalgebra} that $\D_k(Q)$ is equivalent to $\D_k(Q')$ only if the underlying graphs of $Q$ and $Q'$ are isomorphic. To see that, if $\D_k(Q)$ is equivalent to $\D_k(Q')$, then the categories of compact objects are also equivalent. In other words $D^b(kQ) \simeq D^b(kQ')$ for the corresponding bounded derived categories of finitely generated $kQ$- and $kQ'$-modules, respectively. Attached to each $D^b(kQ)$ and $D^b(kQ')$, there is a combinatorial piece of data called the Auslander--Reiten quiver~\cite[\S4]{happel:fdalgebra}. In our case these quivers, say $\Gamma$ and $\Gamma'$, have been explicitly computed in~\cite[Corollary~4.5]{happel:fdalgebra} and they are invariant under equivalence of categories. That is, $\Gamma \cong \Gamma'$. Moreover, it follows from the shape of $\Gamma,\Gamma'$ that $\lZ Q \cong \lZ Q'$, where $\lZ Q$ and $\lZ Q'$ are so-called repetitive quivers of $Q$ and $Q'$, respectively, in the sense of the construction in~\cite[\S VII.4, p.~250]{ARS:representation}. It is an easy fact that the underlying graphs of $Q$ and $Q'$ can be reconstructed from $\lZ Q$ and $\lZ Q'$, respectively, see for instance~\cite[p.~204]{riedtmann:koecher}. Thus, the underlying graphs of $Q$ and $Q'$ must be isomorphic.
\end{proof}

We now collect a few specific examples of stable derivators in order to indicate the generality of the abstract tilting results we obtain here and in the sequels. In these cases the canonical triangulations of \autoref{thm:triang} agree with the classical ones. These examples re-emphasize that being strongly stably equivalent a priori is a much stronger assumption than being merely derived equivalent. For a substantially longer list of specific examples of stable homotopy theories we refer to \cite{schwede-shipley:stable}.

\begin{egs}\label{egs:derivators}
~ 
\begin{enumerate}
\item Let $R$ be an ordinary, not necessarily commutative ring. Then there is the stable model category $\Ch(R)$ of unbounded chain complexes of~$R$-modules (with the projective model structure, see \cite[\S2.3]{hovey:modelcats}). The associated homotopy derivator $\D_R=\D_{\Ch(R)}$ is stable and the underlying category is the ordinary derived category $D(R)$. The equivalence $\Mod(RQ)\simeq\Mod(R)^Q,$ where $RQ$ is the path-algebra of~$Q$ over~$R$, yields a Quillen equivalence at the level of unbounded chain complexes and hence to an equivalence of stable derivators
\[
\D_R^Q\simeq\D_{RQ},
\]
showing that strongly stably equivalent quivers are derived equivalent. As mentioned, such equivalences at the level of derived categories are usually obtained using tilting theory (see~\cite{tilting} and the many references therein) and have been prominently studied for finite dimensional algebras over a field.
\item The same works more generally for Grothendieck abelian categories~$\cA$, e.g., categories of quasi-coherent modules on schemes. There is the so-called injective model structure on the category $\Ch(\cA)$ of unbounded chain complexes in~$\cA$ (see e.g.~\cite{hovey:model-chain-sheaves} or \cite[Chapter~1]{lurie:ha}) which gives rise to a stable derivator $\D_\cA$. In particular, if $X$ is a quasi-compact, quasi-separated scheme, then there is the stable derivator $\D_X$ of unbounded chain complexes of quasi-coherent $\mathcal{O}_X$-modules \cite{hovey:model-chain-sheaves}. Yet more generally, analogous constructions yield model structures for Quillen's exact categories and hence derivators (at least for finite shapes), see~\cite[Appendice]{m:k-theory-deriv} and \cite{gillespie:exact-model,st:exact-model}.
Thus strongly stably equivalent quivers have equivalent homotopy theories of representations with values in Grothendieck abelian and various exact categories. In particular, we obtain exact equivalences of underlying derived categories.
\item There are variants of this in the differential-graded context (see for example \cite{hinich:homological,schwede-shipley:algebras,fresse:modules}). Given a differential-graded algebra $A$ over an arbitrary ground ring, we can consider functors from $Q$ to dg-modules over~$A$. Endowed with suitable model structures this yields the stable derivator $\D_A$ of dg-modules over~$A$. For a quiver~$Q$ we obtain an equivalence $\D_A^Q\simeq\D_{AQ}$ where $AQ$ is a differential-graded version of the usual path-algebra. Hence, if two quivers are strongly stably equivalent then the homotopy theories of \textbf{differential-graded representations} of the quivers are equivalent.
\item Another algebraic context where one can apply the results is \emph{stable module theory} and \emph{representation theory of groups}. In this context, let $R$ be a quasi-Frobenius ring or, more generally, an Iwanaga--Gorenstein ring. That is, $R$ is supposed to be left and right noetherian and of finite left and right self-injective dimension \cite[\S9.1]{enochs-jenda:rel-hom-alg}. Then there are two model structures on $\Mod (R)$, the so-called Gorenstein projective and Gorenstein injective model structures~\cite[Theorem~8.6]{hovey:rep-th}, which are Quillen equivalent via the identity functor on $\Mod (R)$. Thus, up to equivalence, these model categories induce the same derivator $\D^\mathrm{Gor}_R$. The model structures and hence the derivators are well known to be stable; see~\cite[Corollary~1.1.16]{becker:model-sing} for a proof in the model-theoretic context. Classically, the base category $\D^\mathrm{Gor}_R(\bbone)$ or its subcategories have been extensively studied as the stable categories of Gorenstein projective (also known as maximal Cohen-Macaulay) and Gorenstein injective modules; see for instance~\cite{auslander-bridger:stable-mod,buchweitz:mcm,enochs-jenda:rel-hom-alg,holm:gorenstein} and references therein.
\item A special case of the above is $R = kG$, the group algebra of a finite group $G$ over a field $k$. As then $R$ has a natural Hopf algebra structure compatible with the Gorenstein projective model structure, $\D^\mathrm{Gor}_R$ becomes a monoidal stable derivator. The structure of the base category has been in this case studied for instance in~\cite{benson-rickard-carlson:thick-stmod,rickard:idemp,bik:stratification-finite-gp}.
\item Similarly to the differential-graded context, we can also consider spectra in the sense of topology (see for example \cite{hss:symmetric,ekmm:rings,mmss:diagram}). Choosing one of these approaches, let $E$ be a symmetric ring spectrum. Then the category of $E$-module spectra can be endowed with a stable model structure \cite{hss:symmetric} and there is hence the associated stable derivator $\D_E$ of $E$-module spectra. We can think of $\D_E^Q$ as the derivator of \textbf{spectral representations} of~$Q$ over~$E$. Thus, any pair of strongly stably equivalent quivers will also give us a tilting result for spectral representations.
\item More abstractly, let us recall that many of the typical approaches to axiomatic homotopy theory forget to derivators. Consequently, there is an entire zoo of further examples induced by stable model categories, stable $\infty$-categories \cite{lurie:ha}, or stable cofibration categories \cite{schwede:topologicaltc} (possibly only after restricting the class of allowable shapes of diagrams which would still be enough to obtain the tilting results). In particular, this includes examples of interest coming from equivariant stable homotopy theory \cite{mandell-may:equivariant,lms:equivariant}, motivic stable homotopy theory \cite{voevodsky:a1,morel-voevodsky:a1,jardine:motivic} or parametrized stable homotopy theory \cite{may-sigurdsson:parametrized,ABGHR:units,ABG:twists}. The statement that two quivers are strongly stably equivalent really means that the homotopy theories of \textbf{abstract representations} of the quivers are pseudo-naturally equivalent. For many more examples of stable model categories arising in various areas of algebra, geometry, and topology see \cite{schwede-shipley:stable}. Another candidate member of this zoo where one might expect the existence of a stable derivator is Kasparov's bivariant K-theory of C*-algebras; see~\cite{meyer-nest:filtrated-k-th,koehler:equivariant}.
\end{enumerate}
\end{egs}

Now, having all these examples in mind, let us emphasize that, by definition of strongly stably equivalent quivers or categories, the strong stable equivalences of shifted derivators are \emph{pseudo-natural with respect to exact morphisms}. To fill this with more life, let us recall that Quillen adjunctions between stable model categories induce exact morphisms of homotopy derivators. Similarly, it is expected that exact functors between stable $\infty$-categories induce exact morphisms of homotopy derivators. For strongly stably equivalent quivers or categories this has the consequence that the corresponding strong stable equivalences of shifted derivators commute with many typical constructions. In particular, this implies that strong stable equivalences are pseudo-natural with respect to various kinds of restriction of scalar functors, induction and coinduction functors as well as (Bousfield) localizations and colocalizations.



\section{Abstract tilting theory for Dynkin quivers of type $A$}
\label{sec:An}

In this section we establish an abstract tilting result for categories which are given by `finite zig-zags of morphisms'. These are the finite categories associated to the Dynkin diagrams $A_n$ but with arbitrary orientations of the edges. More specifically, given a natural number $n$, a \textbf{zig-zag of length $n$} is a category freely generated by a connected, oriented graph with precisely $n+1$ vertices $j_0,\ldots, j_n$ and $n$ edges $\alpha_i,1\leq i\leq n,$ such that either $\alpha_i\colon j_{i-1}\to j_i$ or $\alpha_i\colon j_i\to j_{i-1}$. The morphisms of the first kind are \textbf{order-preserving}, the other ones are \textbf{order-reversing}. If all morphisms in the zig-zag are order-preserving, then $J$ is simply the ordinal $[n]=(0<1<\ldots<n)$, and dually.

As an example, the zig-zags of length two are precisely the categories freely generated by the graphs
\[
j_0\to j_1\to j_2,\quad j_0\ot j_1\to j_2,\quad j_0\to j_1\ot j_2,\quad j_0\ot j_1\ot j_2.
\]
A special case of the tilting result of this section will imply that for an arbitrary stable derivator \D there are (exact) equivalences of (triangulated) categories
\[
\D(j_0\to j_1\to j_2)\simeq\D(j_0\ot j_1\to j_2)\simeq \D(j_0\to j_1\ot j_2)\simeq\D(j_0\ot j_1\ot j_2).
\]
As a warm-up, let us establish the existence of such equivalences at the level of underlying categories. Obviously the first and the last category are equivalent because the two shapes under consideration are isomorphic. An equivalence of $\D(j_0\ot j_1\to j_2)$ and $\D(j_0\to j_1\ot j_2)$ is easily deduced from the stability of \D. In fact, using the fully faithfulness of Kan extensions along fully faithful functors, the left Kan extension along $\ulcorner\to\square$ induces an equivalence 
\[
\D(j_0\ot j_1\to j_2)=\D(\ulcorner)\simeq\D(\square)^\ex,
\]
where $\D(\square)^\ex\subseteq\D(\square)$ is the full subcategory spanned by the bicartesian squares. Similarly, right Kan extension along the inclusion $\lrcorner\to\square$ induces an equivalence $\D(j_0\to j_1\ot j_2)\simeq\D(\square)^\ex,$ so that we obtain the desired equivalence. By duality, it suffices now to obtain an equivalence $\D([2])\simeq\D(\lrcorner).$ The category $\D([2])$ is equivalent to the category of coherent diagrams
\begin{equation}
\vcenter{\xymatrix@-.5pc{
    &
    w\ar[r]\ar[d]&
    0\ar[d]\\
    x\ar[r] &
    y\ar[r] &
    z
  }}\label{eq:A2}
\end{equation}
such that the square is bicartesian and the object in the upper right corner is a zero object. In fact, an equivalence sending $x\to y\to z$ to a coherent diagram \eqref{eq:A2} is given by a left extension by zero followed by a right Kan extension. Similarly, given a coherent diagram $x\to y\ot w$, a right extension by zero followed by a left Kan extension yields a coherent diagram of the form \eqref{eq:A2}, and this defines an equivalence of categories. Putting these equivalences together we obtain the desired equivalence $\D([2])\simeq\D(\lrcorner)$. These steps can of course be lifted to yield similar morphisms of derivators, so that we also obtain corresponding pseudo-natural equivalences of derivators, and hence in particular exact equivalences of triangulated categories.

The case of arbitrary finite zig-zags is obtained by an iteration of these constructions. We introduce some notation and terminology which helps organizing the steps. The construction is essentially a variant of Waldhausen's $S_\bullet$-construction from Algebraic K-Theory \cite{waldhausen:k-theory}. 

Let $J$ be a zig-zag of length~$n$ and let $k$ be the number of order-reversing arrows in $J$. There is a unique functor $\phi\colon J\to[n-k]\times[k]$ which sends $j_0$ to $(0,k)$ and which is injective on objects. We then have $\phi(j_n)=(n-k,0),$ and~$\phi$ allows us to consider~$J$ as a full subcategory of $[n-k]\times[k].$ More explicitly, this functor can be defined inductively. Let $\phi$ be already defined for the objects $j_0,\ldots,j_l$ and morphisms $\alpha_1,\ldots,\alpha_l$. If $\phi(j_l)$ is given by $(x_l,y_l),$ then we set
\[
\phi(j_{l+1})= \left\{  \begin{array}{l@{\quad,\quad}l}(x_l+1,y_l)&\alpha_{l+1}\;\text{order-preserving},\\ (x_l,y_l-1)&\alpha_{l+1}\;\text{order-reversing,}\end{array}\right.
\]
and this also forces a definition of $\phi(\alpha_{l+1}).$ 

For each natural number $n$ and $0<k<n$, let us write $J_{n-k,k}$ for the zig-zag of length $n$ given by $n-k$ order-preserving morphisms `followed by' $k$ order-reversing ones. Let $J$ be an arbitrary zig-zag of length $n$ which is not isomorphic to $[n]$ or its dual. Then, by the \textbf{convex hull} $[J,J_{n-k,k}]$ of $J$ and $J_{n-k,k}$ we mean the full subcategory of $[n-k]\times[k]$ spanned by all objects between $J$ and $J_{n-k,k}$, or, more precisely, the up-set generated by~$J.$

As a specific example, let us consider the zig-zag $j_0\to j_1\ot j_2\ot j_3\to j_4\ot j_5$ of length $n=5$ with $k=3$ order-reversing morphisms. The following picture shows the standard identification with a subcategory of $[2]\times[3]$, the zig-zag $J_{2,3}$, and also the convex hull of these two zig-zags containing two squares:

\begin{equation}
\vcenter{
\tiny{
\xymatrix@-.7pc{
&&(2,0)\ar[d]&
&&(2,0)\ar[d]&
&&(2,0)\ar[d]\\
&(1,1)\ar[d]\ar[r]&(2,1)&
&&(2,1)\ar[d]&
&(1,1)\ar[d]\ar[r]&(2,1)\ar[d]\\
&(1,2)\ar[d]&&
&&(2,2)\ar[d]&
&(1,2)\ar[d]\ar[r]&(2,2)\ar[d]\\
(0,3)\ar[r]&(1,3)&&
(0,3)\ar[r]&(1,3)\ar[r]&(2,3)&
(0,3)\ar[r]&(1,3)\ar[r]&(2,3)
}
}
}
\label{eq:example}
\end{equation}

\begin{lem}\label{lem:Annf}
Any zig-zag $J$ of length $n$ with $k$ order-reversing morphisms is strongly stably equivalent to $J_{n-k,k}$.
\end{lem}
\begin{proof}
In the case of $k=0$ or $k=n$ the result is immediate. So let us assume that $0<k<n$. Let $K\subseteq [n-k]\times[k]$ be the convex hull of $J$ and $J_{n-k,k}$ which comes with fully faithful inclusion functors $i\colon J\to K$ and $j\colon J_{n-k,k}\to K$. Moreover, let \D be a stable derivator. A repeated application of \autoref{lem:detection} implies that $i_!\colon \D^J\to\D^K$ restricts to a pseudo-natural equivalence of derivators $i_!\colon\D^J\simeq\D^{K,\ex}$, where $\D^{K,\ex}\subseteq\D^K$ denotes the full subderivator given by the coherent diagrams of shape~$K$ such that all squares are bicartesian. By a similar repeated application of \autoref{lem:detection} it follows that also $j_\ast\colon\D^{J_{n-k,k}}\to\D^K$ restricts to a pseudo-natural equivalence $j_\ast\colon\D^{J_{n-k,k}}\simeq\D^{K,\ex}$, and we conclude by putting these together, $\D^J\simeq\D^{K,\ex}\simeq\D^{J_{n-k,k}}$.
\end{proof}

\begin{lem}\label{lem:Anind}
For $0<k<n$ the quivers $J_{n-k,k}$ and $J_{n-k+1,k-1}$ are strongly stably equivalent.
\end{lem}
\begin{proof}
Let \D be a stable derivator and let $K$ be the zig-zag of length $n+1$ obtained from $J_{n-k,k}$ by adding an order-preserving morphism $\alpha_{n+1}.$ Formally, $K$ is the full subcategory of $[n-k+1]\times [k]$ spanned by $J_{n-k,k}$ and $(n-k+1,0)$. This category comes with an obvious inclusion $i_1\colon J_{n-k,k}\to K$, which is the inclusion of a sieve. Hence, the fully faithful $(i_1)_\ast\colon\D^{J_{n-k,k}}\to\D^K$ is right extension by zero, and induces a pseudo-natural equivalence onto the full subderivator of $\D^K$ spanned by all diagrams vanishing at $(n-k+1,0)$ (\autoref{lem:extbyzero}). Let $L\subseteq [n-k+1]\times[k]$ be the full subcategory spanned by $K$ and $J_{n-k+1,k-1}$. There is an obvious inclusion $i_2\colon K\to L$, and a repeated application of \autoref{lem:detection} implies that $(i_2)_!\colon\D^K\to\D^L$ induces a pseudo-natural equivalence onto the full subderivator spanned by all diagrams making all squares cocartesian. Thus, $(i_2)_!(i_1)_\ast\colon\D^{J_{n-k,k}}\to\D^L$ induces a pseudo-natural equivalence onto the full subderivator $\E\subseteq\D^L$ spanned by all diagrams making the squares cocartesian and vanishing on $(n-k+1,0)$.

Similarly, there is the obvious inclusion $j_1\colon J_{n-k+1,k-1}\to J_{n-k+1,k}$ of a cosieve, so that $(j_1)_!\colon\D^{J_{n-k+1,k-1}}\to\D^{J_{n-k+1,k}}$ is left extension by zero, and hence induces a pseudo-natural equivalence onto the full subderivator of $\D^{J_{n-k+1,k}}$ spanned by all diagrams vanishing at $(n-k+1,0)$ (\autoref{lem:extbyzero}). Note that $L$ is also the full subcategory of $[n-k+1]\times[k]$ spanned by $J_{n-k,k}$ and $J_{n-k+1,k}$, and let $j_2\colon J_{n-k+1,k}\to L$ be the inclusion. A repeated application of \autoref{lem:detection} implies that $(j_2)_\ast\colon\D^{J_{n-k+1,k}}\to\D^L$ induces a pseudo-natural equivalence onto the full subderivator of $\D^L$ spanned by all diagrams making the squares cartesian. But since \D is stable, $(j_2)_\ast(j_1)_!\colon\D^{J_{n-k+1,k-1}}\to\D^L$ induces an equivalence onto the same subderivator $\E\subseteq \D^L$ as $(i_2)_!(i_1)_\ast\colon\D^{J_{n-k,k}}\to\D^L$. Thus, we obtain a pseudo-natural equivalence of derivators $\D^{J_{n-k,k}}\simeq\E\simeq\D^{J_{n-k+1,k-1}}$.
\end{proof}

These two lemmas immediately give the following result. A similar result for zig-zags of length two in the model category of module spectra over a commutative symmetric ring spectrum was written up in \cite{schwede:Morita}.

\begin{thm}\label{thm:An}
Two finite zigzags are strongly stably equivalent if and only if they are of the same length.
\end{thm}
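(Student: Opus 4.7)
The plan is to split the equivalence into its two directions and reuse the infrastructure already developed.

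For the "if" direction, let $J$ and $J'$ be finite zig-zags both of length $n$, with $k$ and $k'$ order-reversing morphisms, respectively. I would first invoke \autoref{lem:Annf} to reduce each to a standard shape, obtaining $J \sse J_{n-k,k}$ and $J' \sse J_{n-k',k'}$. The key step is then to run an induction on the number of order-reversing arrows, based entirely on \autoref{lem:Anind}: starting from $J_{n-k,k}$, a single application yields $J_{n-k,k} \sse J_{n-k+1,k-1}$, and iterating this $k$ times produces a chain
\[
J_{n-k,k} \sse J_{n-k+1,k-1} \sse \cdots \sse J_{n,0} = [n].
\]
The same reasoning applied to $J'$ gives $J_{n-k',k'} \sse [n]$. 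Chaining these equivalences via transitivity, which is guaranteed by \autoref{lem:closure}(i), yields $J \sse J'$. (The boundary cases $k \in \{0, n\}$, where $J$ is already isomorphic to $[n]$ or its opposite, are handled directly by \autoref{lem:Annf}, noting that $J_{0,n}$ and $J_{n,0}$ are intertranslated by \autoref{lem:Anind} as well.)

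For the "only if" direction, I would observe that any finite zig-zag is a finite quiver whose underlying graph is a tree, so it has no oriented cycles. Hence \autoref{prop:strongly-equiv-necessary} applies to any pair of zig-zags $J, J'$ with $J \sse J'$, forcing the underlying non-oriented graphs to be isomorphic. Since the underlying non-oriented graph of a zig-zag of length $n$ is simply the path on $n+1$ vertices, and two paths are isomorphic if and only if they have the same number of vertices, this immediately gives equal lengths.

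The main obstacle is conceptual rather than technical: all of the real derivator-theoretic content has already been absorbed into \autoref{lem:Annf} and \autoref{lem:Anind}, and the representation-theoretic necessary condition has been packaged into \autoref{prop:strongly-equiv-necessary}. What remains is assembling these pieces, which is routine. The only point that deserves care is verifying that the induction parameter $k$ behaves correctly at the extremes, but this is handled simply by letting \autoref{lem:Annf} cover $k=0$ and $k=n$ directly, since in those cases $J$ is already isomorphic to the standard shape.
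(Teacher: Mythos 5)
Your proof is correct and takes essentially the same route as the paper: reduce via \autoref{lem:Annf}, iterate \autoref{lem:Anind} to descend to $J_{n,0}\cong[n]$ for sufficiency, and invoke \autoref{prop:strongly-equiv-necessary} for necessity. The only slight inaccuracy is your parenthetical claim that $J_{0,n}$ and $J_{n,0}$ are ``intertranslated'' by \autoref{lem:Anind}, whose hypothesis $0<k<n$ excludes $k=n$; this is harmless, though, since $J_{0,n}$ is already isomorphic as a category to $[n]=J_{n,0}$ by the relabeling $j_i\mapsto j_{n-i}$, so no application of \autoref{lem:Anind} is needed for the boundary cases.
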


\begin{proof}
We first show that an arbitrary zigzag $J$ of length $n$ is strongly stably equivalent to $[n]$. In the special cases of $J\cong[n]$ or $J\cong[n]\op$ the result is immediate. So let us assume that there are $k$ order-reversing maps in $J$ with $0<k<n$. In that case \autoref{lem:Annf} and \autoref{lem:Anind} imply that 
\[
J\sse J_{n-k,k}\sse J_{n-k+1,k-1}\sse\ldots\sse J_{n,0}\cong[n].
\]
Thus, two zigzags of the same finite length are strongly stably equivalent. On the other hand, two strongly stably equivalent finite zigzags are of the same length by \autoref{prop:strongly-equiv-necessary}.
\end{proof}

\section{Abstract tilting theory for trees with a unique branching point}
\label{sec:onebranch}

For us, a tree is a connected quiver such that the underlying unoriented graph has no cycles. In this section we prove that if $Q$ is a finite tree with precisely one branching point, i.e., vertex of valence at least three, and if $Q'$ is obtained from~$Q$ by an arbitrary reorientation of the edges, then $Q$ and $Q'$ are strongly stably equivalent. This class includes the Dynkin quivers $D_n,E_n$, the Euclidean quivers $\widetilde{D}_n,\widetilde{E}_n$, and of course also the Dynkin quivers of \S\ref{sec:An} (see also~\cite{hazeetal:ubiquity}), leading to a generalization of \cite{happel:dynkin}.

As a preparation, let us note that there is a different way of describing the equivalences obtained in \autoref{thm:An}. Let $J$ be a zig-zag of length $n$ with $k$ orientation-reversing morphisms for $0< k\leq n.$ Then we denote by $M\subseteq[n]\times[k]$ the \emph{full} subcategory given by the convex hull of $J,[n]\times\{k\},$ and $\{(n-k+1,0),(n,k-1)\}.$ In the example described in \eqref{eq:example} the category $M$ is given by
\[
\xymatrix@-.5pc{
&& (2,0)\ar[d]\ar[r]&(3,0)\ar[d]\ar@/^0.8pc/[dr]\\
&(1,1)\ar[d]\ar[r]&(2,1)\ar[d]\ar[r]&(3,1)\ar[r]\ar[d]&(4,1)\ar[d]\ar@/^0.8pc/[dr]\\
&(1,2)\ar[d]\ar[r]&(2,2)\ar[d]\ar[r]&(3,2)\ar[r]\ar[d]&(4,2)\ar[r]\ar[d]&(5,2)\ar[d]\\
(0,3)\ar[r]&(1,3)\ar[r]&(2,3)\ar[r]&(3,3)\ar[r]&(4,3)\ar[r]&(5,3)
}
\]
Let $K$ be the full subcategory spanned by $J$ and $[(n-k+1,0),(n,k-1)]$, and let~$K'$ be the full subcategory spanned by $J'=[n]\times\{k\}$ and $[(n-k+1,0),(n,k-1)]$. There are obvious fully faithful inclusions
\begin{equation}
  \vcenter{\xymatrix@-.5pc{
      J\ar[r]^-{i_1}&K\ar[r]^-{i_2}&M,&&
      J'\ar[r]^-{i_1'}&K'\ar[r]^-{i_2'}&M,
    }}\label{eq:Sdot}
\end{equation}
and one observes that $i_1\colon J\to K$ is a sieve while $i_1'\colon J'\to K'$ is a cosieve. We now consider the compositions 
\[
\D^J\stackrel{(i_1)_\ast}{\to}\D^K\stackrel{(i_2)_!}{\to}\D^M\qquad\text{and}\qquad
\D^{J'}\stackrel{(i_1')_!}{\to}\D^{K'}\stackrel{(i_2')_\ast}{\to}\D^M.
\]
The first composition amounts to right extension by zero (\autoref{lem:extbyzero}) and then forming cocartesian squares (again by a repeated application of \autoref{lem:detection}). Similarly, the second composition is left extension by zero followed by the formation of cartesian squares. These two compositions are thus fully faithful with the same essential image and hence induce an equivalence of both $\D^J$ and $\D^{J'}$ with the same subderivator of $\D^M.$

We now recycle this in the case of a tree $Q$ with a unique branching point $b\in Q$. Let $n\geq 3$ be the \textbf{valence} of $b$, i.e., the number of edges adjacent to~$b$. Moreover, let $k$ be the number of incoming edges at $b$, so that there are $n-k$ outgoing morphisms at $b$. Given such a quiver, let $l_1,\ldots, l_k$ be the lengths of the incoming branches, and let $l_{k+1},\ldots ,l_n$ be the lengths of the outgoing branches. Thus, the $i$-th branch is a zig-zag $J_i$ of length~$l_i.$ We set 
\[
\underline{l}=(l_1,\ldots,l_k;l_{k+1},\ldots,l_n),
\]
and then say that the \textbf{type} of~$Q$ is $(n,k;\underline{l})$.

We single out the coherently oriented examples of such trees. More specifically, we denote by
\[
Q_{n,k;\underline{l}}
\]
the quiver of type $(n,k;\underline{l})$ such that on each branch the edges are oriented in the same way as the unique edge of that branch which is adjacent to~$b$. Such a quiver is uniquely determined up to a relabeling and possibly a permutation of the branches. For example, the quiver $Q_{3,2;(2,3;4)}$ can be depicted by:
\[
\xymatrix{
&&&\bullet\ar[d]&&&&\\
&&&\bullet\ar[d]&&&&\\
\bullet\ar[r]&\bullet\ar[r]&\bullet\ar[r]&\bullet\ar[r]&\bullet\ar[r]&\bullet\ar[r]&\bullet\ar[r]&\bullet
}
\]

The following lemma tells us that we can always obtain these coherent orientations. In the proof we keep using the notation of \eqref{eq:Sdot}.

\begin{lem}\label{lem:orient}
Any tree of type $(n,k;\underline{l})$ is strongly stably equivalent to $Q_{n,k;\underline{l}}$. 
\end{lem}
\begin{proof}
Let \D be a stable derivator. We first show that we can change the orientations of the edges on a fixed branch to the one of the unique edge on that branch which is adjacent to $b$. Thus, let us consider a branch $J$ of $Q$ which --say-- `begins' at the branching point. Such a branch is a zigzag of a certain length~$l$ with `initial vertex' $q_0$. The branch~$J$ sits in the pushout square on the left in which $Q_0$ denotes the quiver obtained from~$Q$ by removing the given branch~$J$ with the exception of $b$ itself,
\[
\xymatrix{
\bbone\ar[d]_b\ar[r]^-{q_0}&J\ar[d]\ar[r]^-{i_1}&K\ar[r]^-{i_2}\ar[d]&M\ar[d]\\
Q_0\ar[r]&Q\ar[r]_-{j_1}&Q_K\ar[r]_-{j_2}&Q_M.
}
\]
Also the remaining squares are defined as pushouts with the upper horizontal morphisms as in \eqref{eq:Sdot}. It follows from construction that $j_1\colon Q\to Q_K$ is the inclusion of a sieve, and hence that $(j_1)_\ast\colon\D^Q\to\D^{Q_K}$ is right extension by zero, inducing an equivalence onto the full subderivator of $\D^{Q_K}$ spanned by diagrams satisfying the obvious vanishing conditions (\autoref{lem:extbyzero}). The functor $j_2\colon Q_K\to Q_M$ is also fully faithful, and $(j_2)_!$ hence induces an equivalence onto its essential image. One checks that a repeated application of \autoref{lem:detection} implies that $(j_2)_!$ amounts to adding cocartesian squares everywhere. In fact, the slice categories in question contain as homotopy cofinal subcategories the ones showing up in the corresponding discussion of $(i_2)_!$. Thus, $(j_2)_!(j_1)_\ast$ induces a pseudo-natural equivalence between $\D^Q$ and the full subderivator $\E$ of $\D^{Q_M}$ spanned by diagrams satisfying these vanishing conditions and making all squares bicartesian.

Let~$Q'$ be the quiver which is obtained from $Q$ by coherently reorienting the branch~$J$ to get the branch~$J'$ which now comes with an initial vertex $q_0'$. Thus, related to this quiver we have the following diagram consisting of pushout squares
\[
\xymatrix{
\bbone\ar[d]_b\ar[r]^-{q_0'}&J'\ar[d]\ar[r]^-{i_1'}&K'\ar[r]^-{i_2'}\ar[d]&M\ar[d]\\
Q_0\ar[r]&Q'\ar[r]_-{j_1'}&Q'_{K'}\ar[r]_-{j_2'}&Q_M,
}
\]
where the functors $i_1',i_2'$ are again as in \eqref{eq:Sdot}. It is easy to check that $j_1'$ is again a cosieve while $j_2'$ is fully faithful. Hence the composition $(j_2')_\ast(j_1')_!\colon\D^{Q'}\to\D^{Q_M}$
defines an equivalence onto its essential image $\E'\subseteq\D^{Q_M}$. We leave it to the reader to check that $\E'$ is spanned by the diagrams which satisfy the obvious vanishing conditions and which make all squares cartesian (use again \autoref{lem:extbyzero} and repeatedly \autoref{lem:detection}). 

Note that the two subderivators $\E,\E'\subseteq\D^{Q_M}$ coincide, so that we end up with pseudo-natural equivalences
\[
\D^Q\simeq\E=\E'\simeq\D^{Q'},
\]
showing that $Q$ and $Q'$ are strongly stably equivalent. Similar arguments show that if a branch which `ends' at the branching point is coherently reoriented, then we end up with a strongly stably equivalent quiver. Finally, an induction over the number of branches shows that $Q$ and $Q_{n,k;\underline{l}}$ are strongly stably equivalent.
\end{proof}

\begin{lem}\label{lem:induct}
Let $(n,k;\underline{l})$ be a type of a tree with precisely one branching point and write $\underline{l}$ as $\underline{l}=(l_1,\ldots,l_k;l_{k+1},\ldots,l_n).$ If $\underline{l}'=(l_1,\ldots,l_{k-1};l_k,\ldots,l_n)$ then the trees $Q_{n,k;\underline{l}}$ and $Q_{n,k-1;\underline{l}'}$ are strongly stably equivalent.
\end{lem}
\begin{proof}
The strategy of the proof of \autoref{lem:orient} can be easily adapted to this case, and we leave the details to the reader.
\end{proof}

\begin{thm}\label{thm:tiltone}
Let $Q$ and $Q'$ be finite trees with a unique branching point. Then $Q$ and $Q'$ are strongly stably equivalent if and only if~$Q'$ can be obtained from $Q$ by reorienting some edges.
\end{thm}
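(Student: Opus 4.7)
The plan is to derive this theorem by combining the two preceding lemmas with the necessary condition of \autoref{prop:strongly-equiv-necessary} and the closure properties of \autoref{lem:closure}.

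For the \emph{only if} direction, finite trees are finite quivers without oriented cycles, so \autoref{prop:strongly-equiv-necessary} applies: $Q\sse Q'$ forces the underlying non-oriented graphs of $Q$ and $Q'$ to be isomorphic. After identifying vertices via such a graph isomorphism, the edge sets agree as unordered pairs, so $Q'$ differs from $Q$ only by the choice of orientation on each edge, which is what is claimed.

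For the \emph{if} direction, suppose $Q'$ is obtained from $Q$ by reorienting edges. Then $Q$ and $Q'$ share the same underlying graph, and in particular the same unique branching point $b$ of valence $n\geq 3$ and the same unordered multiset $\{l_1,\dots,l_n\}$ of branch lengths. Let $k$ be the number of branches of $Q$ that are incoming at $b$, and order the branch lengths as $\underline{l}=(l_1,\dots,l_k;l_{k+1},\dots,l_n)$; likewise let $k'$ and $\underline{l}'$ be the corresponding data for $Q'$. By \autoref{lem:orient}, we have $Q\sse Q_{n,k;\underline{l}}$ and $Q'\sse Q_{n,k';\underline{l}'}$.

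It remains to link the two coherently-oriented trees by a chain of strong stable equivalences. Permuting the incoming branches among themselves (or the outgoing among themselves) produces an isomorphic category, hence a strongly stably equivalent one by \autoref{lem:closure}(ii). Using this, we may bring any desired branch into the $k$-th slot, and then \autoref{lem:induct} transfers it to the outgoing side, decreasing the number of incoming branches by one; iterating this step (and its formal dual, which follows by the same argument applied to the opposite quiver, or by running \autoref{lem:induct} backwards using transitivity) allows us to realize any splitting of the multiset $\{l_1,\dots,l_n\}$ into incoming/outgoing parts. Chaining these steps with transitivity from \autoref{lem:closure}(i) yields
\[
Q\sse Q_{n,k;\underline{l}}\sse Q_{n,k';\underline{l}'}\sse Q',
\]
completing the argument.

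There is no serious obstacle here: the work has essentially been done in \autoref{lem:orient} and \autoref{lem:induct}. The only delicate bookkeeping is to note that \autoref{lem:induct} as stated only moves the last incoming branch to become the first outgoing branch, so one must interleave applications of that lemma with branch-permutation isomorphisms to reach an arbitrary target partition of the (multiset of) branch lengths.
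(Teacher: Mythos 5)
Your proof is correct and follows essentially the same route as the paper: combine \autoref{lem:orient} and an iterated application of \autoref{lem:induct} with the necessary condition from \autoref{prop:strongly-equiv-necessary}. Your added remark about interleaving branch-permutation isomorphisms with \autoref{lem:induct} is a reasonable expansion of what the paper compresses into ``an inductive application of \autoref{lem:induct}''.
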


\begin{proof}
By \autoref{lem:orient} it follows that $Q\sse Q_{n,k;\underline{l}}$ and $Q'\sse Q_{n',k';\underline{l'}}$. If $Q$ and $Q'$ are quivers on the same underlying unoriented graph, then $n$ and $n'$ have to agree and it is easy to see that an inductive application of \autoref{lem:induct} implies $Q_{n,k;\underline{l}}\sse Q_{n',k';\underline{l'}}$. Thus putting this together we obtain
\[
Q\sse Q_{n,k;\underline{l}}\sse Q_{n',k';\underline{l'}}\sse Q'.
\]
If on the other hand $Q \sse Q'$, then $Q$ and $Q'$ must have the same underlying unoriented graph by \autoref{prop:strongly-equiv-necessary}.
\end{proof}

A \textbf{forest} is a disjoint union of finite trees.

\begin{cor}\label{cor:forestone}
Let $F$ be a forest such that each tree in $F$ has at most one branching point. If $F'$ is obtained from $F$ by reorienting some edges, then $F$ and $F'$ are strongly stably equivalent.
\end{cor}
\begin{proof}
This is immediate from \autoref{thm:tiltone}, \autoref{thm:An}, and \autoref{lem:closure}.
\end{proof}

The case of trees with more than one branching points needs a different approach, since the strategy based on the $S_\bullet$-construction breaks down in that case. We will come back to this in \cite{gst:tree}.

\section{Cartesian and strongly cartesian $n$-cubes}
\label{sec:cubes}

Both for our results in \S\S\ref{sec:square}-\ref{sec:May} and also for the reflection functors for arbitrary trees in \cite{gst:tree} and more general shapes \cite{gst:acyclic} we need some facts about (strongly) cartesian $n$-cubes in derivators. This section provides the necessary background. The details are more subtle than for the corresponding results about squares in \cite{groth:ptstab}, and we hence give many details. Some of the ideas are inspired by corresponding situations in the context of topological spaces as discussed by Goodwillie in \cite{goodwillie:II}. \emph{We use the convention that an $n$-cube and any subcube is at least of dimension 2.}

Associated to the $n$-cube $[1]^n$ there are quite some useful full subcategories. For ease of notation, we pass tacitly back and forth between the description of the $n$-cube as $[1]^n$ and as the power set of $\{1,\ldots,n\}$. Under this identification we choose $1=(1,1,...,1)$ to correspond to $\{1,\ldots,n\}$. Any subcube of $[1]^n$ has a minimal element $m'$ and a maximal element $m''$, and the corresponding subcube
\[
[m'_1,m''_1]\times\ldots\times[m'_n,m''_n]
\]
will be denoted by $[m',m'']$. If we denote the cardinality of $m\in [1]^n$ by
\[
d(m)=m_1+\ldots +m_n,
\] 
then the dimension of the cube $[m',m'']$ is given by $d(m'')-d(m')$. In particular, we obtain a canonical functor
\begin{equation}\label{eq:j}
j_{[m',m'']}\colon[1]^{d(m'')-d(m')}\to [1]^n,
\end{equation}
which is an isomorphism onto its image $[m',m'']$.

We will also have uses for the following full subcategories of~$[1]^n$.

\begin{defn}
\begin{enumerate}
\item We denote by $i_{\leq k}\colon [1]^n_{\leq k}\to [1]^n$ the full subcategory given by all subsets of cardinality at most $k$ for $0\leq k\leq n$.
\item There are similar full subcategories $i_{<k}\colon [1]^n_{<k}\to [1]^n,i_{\geq k}\colon [1]^n_{\geq k}\to [1]^n,$ $i_{>k}\colon [1]^n_{>k}\to [1]^n$, and $i_{=k}\colon [1]^n_{=k}\to [1]^n$ for $0\leq k\leq n$.
\end{enumerate}
\end{defn}

\begin{defn}
\begin{enumerate}
\item An $n$-cube in \D is \textbf{cartesian} if it lies in the essential image of $(i_{\geq 1})_\ast\colon\D([1]^n_{\geq 1})\to\D([1]^n)$.
\item  An $n$-cube in \D is \textbf{strongly cartesian} if it lies in the essential image of $(i_{\geq n-1})_\ast\colon\D([1]^n_{\geq n-1})\to\D([1]^n)$.
\end{enumerate}
\end{defn}

For squares these two notions coincide, but from dimension $n=3$ on being strongly cartesian is a stricter assumption (see \autoref{thm:subcubes}). Since the functors $i_{\geq_k}$ are fully faithful, the same is true for $(i_{\geq k})_\ast$, and we can consequently decide whether an $n$-cube is (strongly) cartesian by considering the corresponding adjunction unit $\eta\colon\id\to(i_{\geq_k})_\ast(i_{\geq_k})^\ast$. 

The following proof illustrates how convenient the calculus of homotopy exact squares in $\cCat$ is. We only have to focus on the shapes of diagrams, the rest is taken care of by this calculus.

\begin{thm}\label{thm:subcubes}
An $n$-cube $X\in\D([1]^n)$, $n\geq 2$, is strongly cartesian if and only if all subcubes are cartesian.
\end{thm}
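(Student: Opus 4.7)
The plan is to prove both directions simultaneously by induction on $n \geq 2$. The base case $n = 2$ is immediate since the only subcube of dimension at least $2$ is the square itself, and cartesian and strongly cartesian coincide in that case.

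For the inductive step, I treat the $(\Rightarrow)$ direction via the dual of \autoref{lem:detectionplus}. Write the strongly cartesian $X$ as $(i_{\geq n-1})_\ast Y$ for some $Y\in\D([1]^n_{\geq n-1})$ and fix a subcube $j\colon [m',m''] \hookrightarrow [1]^n$ of dimension $k = d(m'')-d(m') \geq 2$. To show $j^\ast X$ is cartesian, I would apply the dual version of the lemma (the analogous statement for $v^\ast u_\ast$ and the essential image of $i_\ast$, obtained by passing to opposites) with $u = i_{\geq n-1}$, $v = j$, the punctured subcube $A = (m',m'']$, and $B' = [1]^n_{\geq n-1} \cup (m',m'']$. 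The only nontrivial hypothesis is that the inclusion $(m',m''] \to (m'/B')$ admits a right adjoint, which I would take to be $b \mapsto b \wedge m''$. Its well-definedness rests on a cardinality coincidence: any $b \in [1]^n_{\geq n-1}$ has $|b^c| \leq 1 < k = |m''\setminus m'|$, so $b$ meets $m''\setminus m'$, forcing $b \wedge m'' > m'$. The lemma then places $j^\ast X$ in the essential image of $i_\ast\colon \D((m',m''])\to\D([m',m''])$, i.e.\ exhibits it as a cartesian $k$-cube.

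For the $(\Leftarrow)$ direction, assuming every subcube is cartesian, I would verify pointwise (by (Der2)) that the unit $X \to (i_{\geq n-1})_\ast (i_{\geq n-1})^\ast X$ is an isomorphism. At vertices $m$ with $d(m) \geq n-1$ this is automatic since $i_{\geq n-1}$ is fully faithful. At a vertex $m$ with $1 \leq d(m) \leq n-2$ the slice $[m,1]$ is a cube of dimension $n-d(m) \leq n-1$ whose subcubes of dimension $\geq 2$ are subcubes of $[1]^n$, hence cartesian; the inductive hypothesis applied to $[m,1]$ gives that $X|_{[m,1]}$ is strongly cartesian and therefore that $X_m \cong \lim_{p \geq m,\, d(p) \geq n-1} X_p$.

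The main obstacle is the remaining vertex $m = 0$, which cannot be addressed by slicing because the relevant slice is $[1]^n$ itself. Here I would use that $[1]^n$ is a subcube of itself of dimension $n \geq 2$, so $X$ is cartesian and $X_0 \cong \lim_{[1]^n_{\geq 1}} X$. The pointwise formulas of the preceding paragraph exhibit the restriction $X|_{[1]^n_{\geq 1}}$ as the right Kan extension of $X|_{[1]^n_{\geq n-1}}$ along the fully faithful inclusion $[1]^n_{\geq n-1} \hookrightarrow [1]^n_{\geq 1}$. Functoriality of right adjoints under composition with the terminal projection then gives $\lim_{[1]^n_{\geq 1}} X \cong \lim_{[1]^n_{\geq n-1}} X$, which supplies the last missing pointwise identification and completes the check that $X$ is strongly cartesian.
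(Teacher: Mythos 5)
Your proof is correct, and it takes a genuinely different route from the paper's. For the forward direction, you invoke the dual of \autoref{lem:detectionplus} directly, with the clever choice $B' = [1]^n_{\geq n-1} \cup (m',m'']$ and the meet $b\mapsto b\wedge m''$ as the required right adjoint; the cardinality estimate you use to check well-definedness ($|b^c|\leq 1 < k$) is essentially the same inequality the paper uses, but your packaging via the detection lemma replaces the paper's explicit three-step factorization $[1]^n_{\geq n-1}\to A_1\to A_2\to A_3\to[1]^n$ and the ensuing mate-pasting computations. This is a cleaner proof of that implication. For the converse, the paper avoids induction by factoring $i_{\geq n-1}$ through all intermediate cardinality levels $[1]^n_{\geq n-1}\to[1]^n_{\geq n-2}\to\cdots\to[1]^n$, reducing the unit at a vertex $m$ to the \emph{single} condition that $(j_{[m,1]})^\ast X$ is cartesian; you instead go through the inclusion $[1]^n_{\geq n-1}\hookrightarrow[1]^n$ in one step, invoke the inductive hypothesis to get \emph{strong} cartesianness of $X|_{[m,1]}$ at intermediate vertices, and then handle the initial vertex $0$ by combining cartesianness of the top cube with $\pi_\ast\circ(i'_{\geq n-1})_\ast = \pi'_\ast$. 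Both routes work and both ultimately use at $m=0$ that $[1]^n$ is a subcube of itself. One thing worth noting is that your sketch quietly relies on several compatibilities of units under restriction and factorization (e.g.\ that the unit of $(i_{\geq n-1})_\ast$ at a vertex $m$ with $d(m)\geq 1$ agrees with the unit of $(i'_{\geq n-1})_\ast$ applied to $(i_{\geq 1})^\ast X$, and that the mate computed inside $[m,1]$ agrees with the one computed inside $[1]^n$); these are exactly the places where the paper spells out explicit pasting diagrams, and they should be included in a full write-up, but they pose no obstruction.
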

\begin{proof}
Let $X$ be strongly cartesian, and let $[m',m'']$ be a subcube of dimension $c_1=d(m'')-d(m') \ge 2$. We want to show that $(j_{[m',m'']})^\ast(X)\in\D([1]^{c_1})$ is cartesian. Since $X$ is strongly cartesian, it lies in the essential image of the right Kan extension along $i_{\geq n-1}\colon [1]^n_{\geq n-1}\to [1]^n.$ This functor factors over the following intermediate \emph{full} subcategories of $[1]^n$.
\begin{enumerate}
\item $A_1$ is spanned by $[1]^n_{\geq n-1}$ and the cube $[m'',1]$, where $1$ is the terminal object of the cube~$[1]^n$. Let us write $c_2=n-d(m'')$ for its dimension.
\item $A_2$ is obtained from $A_1$ by adding the objects in the cube $[m',m'']$ with the exception of $m'.$
\item $A_3$ is obtained from $A_2$ by adding the object $m'$.
\end{enumerate}
Using this notation, the inclusion $i_{\geq n-1}\colon[1]^n_{\geq n-1}\to[1]^n$ factors as
\[
[1]^n_{\geq n-1}\stackrel{j_1}{\to} A_1\stackrel{j_2}{\to}A_2 \stackrel{j_3}{\to} A_3 \stackrel{j_4}{\to} [1]^n.
\]
In order to calculate the value of the right Kan extension along $j_3$ at $m'$, (Der4) tells us to consider the category $(m'/j_3)$. One checks that the projection $(m'/j_3) \to A_2$ to the target induces an isomorphism of categories
\[
(m'/j_3) \cong [m',1] \cap A_2,
\] 
where we consider $[m',1] \cap A_2$ as a subposet of $[1]^n$. Now we use the fact that $[1]^n$ is actually a lattice, where the meet $\wedge$ and join $\vee$ are defined as the componentwise minimum and maximum, respectively. It is obvious that the inclusion $\ell\colon [m',m''] \to ([m',1] \cap A_2)\cup \{m'\}$ has a right adjoint $r$ defined by $r(y) = y \wedge m''$. We claim that this adjunction restricts to an adjunction if we remove $m'$ on both sides, 
\[
(\ell',r')\colon [m',m''] - \{m'\} \rightleftarrows [m',1] \cap A_2.
\]
In fact, it suffices to check that $r$ restricts accordingly. In other words, we must check that $y \wedge m'' > m'$ for each $y \in [1]^n_{\geq n-1} \cap [m',1]$, since all other elements of $([m',1] \cap A_2) - [m',m'']$ are greater than $m''$. To this end, we have $|y\wedge m''| \ge |m''|-1 \ge |m'| + c_1 - 1 > |m'|$ since $|y| \ge n-1$ and $c_1\ge 2$. Thus, $y \wedge m'' \ne m'$ and the claim is proved.

These observations yield a homotopy final functor $[m',m'']-\{m'\}\to (m'/j_3)$ (see \autoref{egs:htpy}), which, esentially by precomposition with \eqref{eq:j}, gives us a homotopy final functor $i\colon (0/[1]^{c_1}-\{0\})\to (m'/j_3)$. This motivates us to consider the following pasting 
\begin{equation}
\vcenter{\xymatrix{
(0/[1]^{c_1}-\{0\})\ar[r]^-i\ar[d]_-\pi&(m'/j_3)\ar[r]^-p\ar[d]_-\pi&A_2\ar[d]^-{j_3}\ar[r]^-{j_3}&A_3\ar[d]^-=\\
\bbone\ar[r]_-=\ar[r]&\bbone\ar[r]_-{m'}&A_3\ultwocell\omit{}\ar[r]_-=&A_3.
}
}
\label{eq:paste1}
\end{equation}
The compatibility of mates with pasting implies that the mate associated to this pasting evaluated on an object $Y\in\D(A_3)$ is the composition
\[
Y_{m'}\stackrel{\eta}{\to}(j_3)_\ast(j_3)^\ast(Y)_{m'}\stackrel{\cong}{\to} \lim_{(m'/j_3)}p^\ast(j_3)^\ast(Y)\stackrel{\cong}{\to}\lim_{(0/[1]^{c_1}-\{0\})}i^\ast p^\ast(j_3)^\ast(Y),
\]
in which the last two maps are isomorphisms by (Der4) and the homotopy finality of~$i$. Thus, this mate is an isomorphism on $Y$ if and only if $Y$ lies in the essential image of $(j_3)_\ast$ (see \cite[Lemma~1.21]{groth:ptstab} for the fact that it is enough to control the unit at~$m'$). One checks that the pasting \eqref{eq:paste1} can be rewritten as
\begin{equation}
\vcenter{\xymatrix{
(0/[1]^{c_1}-\{0\})\ar[r]^-p\ar[d]_-\pi&[1]^{c_1}-\{0\}\ar[r]^-k\ar[d]_-k&
[1]^{c_1}\ar[r]^-{j_{[m',m'']}}\ar[d]^-=&A_3\ar[d]^-=\\
\bbone\ar[r]_0&[1]^{c_1}\ar[r]_-=\ultwocell\omit{}&[1]^{c_1}\ar[r]_-{j_{[m',m'']}}&A_3.
}}
\end{equation}
Using once more the compatibility of mates with pasting, we conclude that the canonical mate can hence also be written as
\[
(j_{[m',m'']})^\ast(Y)_0\stackrel{\eta}{\to} \big(k_\ast k^\ast (j_{[m',m'']})^\ast(Y)\big)_0\stackrel{\cong}{\to} \lim_{(0/[1]^{c_1}-\{0\})}p^\ast k^\ast (j_{[m',m'']})^\ast(Y),
\]
where the second map is an isomorphism by (Der4). Using \cite[Lemma~1.21]{groth:ptstab} once more, we hence conclude that $Y\in\D(A_3)$ lies in the essential image of $(j_3)_\ast$ if and only if $(j_{[m',m'']})^\ast Y$ is cartesian. If we now consider the strongly cartesian~$X$, then applying this to $Y=(j_4)^\ast(X)$ shows that all cubes in~$X$ have to be cartesian.

Let us conversely assume that all subcubes of $X\in\D([1]^n)$ are cartesian. The functor $[1]^n_{\geq n-1}\to[1]^n$ factors as
\[
[1]^n_{\geq n-1}\stackrel{i_2}{\to}[1]^n_{\geq n-2}\stackrel{i_3}{\to}[1]^n_{\geq n-3}\to\ldots\to[1]^n_{\geq 2}\stackrel{i_{n-1}}{\to}[1]^n_{\geq 1}\stackrel{i_n}{\to}[1]^n_{\geq 0}=[1]^n.
\]
It follows that $X$ is already strongly cartesian if for each $Y=X\!\!\mid_{[1]^n_{\geq n-k}}$ the adjunction unit $\eta\colon Y\to (i_k)_\ast i_k^\ast(Y)$ is an isomorphism. Since $i_k$ is fully faithful, \cite[Lemma~1.21]{groth:ptstab} implies that it is enough to verify that the component $\eta_m$ is an isomorphism for all objects $m\in [1]^n_{=n-k}$. To re-express this differently, let us consider the pasting
\begin{equation}
\vcenter{\xymatrix{
(m/[m,1]-\{m\})\ar[r]^-f_-\cong\ar[d]_-\pi&(m/[1]^n_{\geq n-(k-1)})\ar[r]^-p\ar[d]_-\pi&[1]^n_{\geq n-(k-1)}\ar[r]^-{i_k}\ar[d]_-{i_k}&
[1]^n_{\geq n-k}\ar[d]\\
\bbone\ar[r]&\bbone\ar[r]_m&[1]^n_{\geq n-k}\ar[r]\ultwocell\omit{}&[1]^n_{\geq n-k}.
}}
\end{equation}
Using (Der4) and again the compatibility of mates with pasting, we deduce that $\eta_m$ is an isomorphism if and only if the mate associated to this pasting is an isomorphism when evaluated on~$Y$. But it is easy to see that this is the case if and only if the mate associated to the pasting
\begin{equation}
\vcenter{\xymatrix{
(m/[m,1]-\{m\})\ar[r]\ar[d]&[m,1]-\{m\}\ar[r]\ar[d]&[m,1]\ar[d]\ar[r]&[1]^n\ar[d]\\
\bbone\ar[r]_-m&[m,1]\ar[r]\ultwocell\omit{}&[m,1]\ar[r]&[1]^n
}}
\end{equation}
is an isomorphism when applied to~$X$. Using once more the compatibility of mates with pasting, axiom (Der4), and again \cite[Lemma~1.21]{groth:ptstab}, this in turn is equivalent to the statement that the cube~$(j_{[m,1]})^\ast(X)$ is cartesian which is true by assumption.
\end{proof}

We can consider an $(n+1)$-cube $X\in\D([1]^n\times[1])$ as a morphism between two $n$-cubes. Our next aim is to show that a morphism between cartesian $n$-cubes is itself cartesian (see \autoref{thm:morphism}). The proof is an adaptation of the strategy of Goodwillie in \cite{goodwillie:II} to this more abstract setting. We begin by establishing a few convenient lemmas which are also of independent interest.

Given a small category~$A$, let us recall that $A^\lhd$ denotes the \emph{cone} on~$A$ which is obtained from~$A$ by adjoining a new initial object $-\infty.$ This category comes with a natural inclusion $i_A\colon A\to A^\lhd.$ If we iterate this construction then there is the following result. Let us denote the two additional objects in $(A^\lhd)^\lhd$ by $-\infty$ and $-\infty -1,$ so that there is a map $-\infty-1\to -\infty.$ The inclusion $i_A^\lhd\colon A^\lhd\to (A^\lhd)^\lhd$ does not hit $-\infty$ while the inclusion $i_{A^\lhd}\colon A^\lhd\to (A^\lhd)^\lhd$ avoids $-\infty-1$.

\begin{lem}\label{lem:excone}
Let $A$ be a small category. Then the square
\[
\xymatrix{
A\ar[r]^-{i_A}\ar[d]_{i_A}&A^\lhd\ar[d]^-{i_A^\lhd}\\
A^\lhd\ar[r]_-{i_{A^\lhd}}&(A^\lhd)^\lhd\ultwocell\omit{\id}
}
\]
is homotopy exact.
\end{lem}
\begin{proof}
By (Der2) we only have to show that each component of the canonical mate $\psi\colon i_{A^\lhd}^\ast (i_A^\lhd)_\ast \to (i_A)_\ast i_A^\ast$ is an isomorphism. As a first case, let $a\in A\subseteq A^\lhd$. In that case consider the pasting
\[
\xymatrix{
\bbone\ar[r]^-i\ar[d]&(a/i_A)\ar[r]^-p\ar[d]_-\pi&A\ar[r]^-{i_A}\ar[d]_-{i_A}&A^\lhd\ar[d]^-{i_A^\lhd}\\
\bbone\ar[r]&\bbone\ar[r]_-a&A^\lhd\ar[r]_-{i_{A^\lhd}}\ultwocell\omit{}&(A^\lhd)^\lhd
}
\]
in which the upper left horizontal morphism classifies the initial object. The compatibility of mates with respect to pasting tells us that the mate associated to this diagram factors as
\[
a^\ast (i_{A^\lhd})^\ast (i_A^\lhd)_\ast\stackrel{\psi_a}{\to} a^\ast (i_A)_\ast (i_A)^\ast\to\pi_\ast p^\ast (i_A)^\ast\to i^\ast p^\ast (i_A)^\ast.
\]
But the first undecorated arrow is an isomorphism by (Der4), while the second undecorated morphism is an isomorphism because $i$ is an initial object, and hence a homotopy final functor (see \autoref{egs:htpy}). It follows that $\psi_a$ is an isomorphism if and only if the mate of the total pasting is an isomorphism. But this pasting factors further as
\[
\xymatrix{
\bbone\ar[r]^-{i'}\ar[d]&(a/i_A^\lhd)\ar[r]^-{p'}\ar[d]_-\pi&A^\lhd\ar[d]^-{i_A^\lhd}\\
\bbone\ar[r]&\bbone\ar[r]_-a&(A^\lhd)^\lhd,\ultwocell\omit{}
}
\]
in which $i'$ also classifies an initial object. Thus, $\psi_a$ is an isomorphism if and only if
\[
a^\ast(i_A^\lhd)_\ast \to \pi_\ast p'^\ast\to i'^\ast p'^\ast
\]
is an isomorphism. But both functors in this composition are isomorphisms by (Der4) and again the homotopy finality of initial objects (\autoref{egs:htpy}).

For the remaining component at $-\infty\in A^\lhd$ we consider the pasting
\[
\xymatrix{
A\ar[r]^-\cong\ar[d]_\pi&(-\infty/i_A)\ar[r]^-p\ar[d]_-\pi&A\ar[r]^-{i_A}\ar[d]_-{i_A}&A^\lhd\ar[d]^-{i_A^\lhd}\\
\bbone\ar[r]&\bbone\ar[r]_-{-\infty}&A^\lhd\ar[r]_-{i_{A^\lhd}}\ultwocell\omit{}&(A^\lhd)^\lhd.
}
\]
The compatibility with pasting together with (Der4) allows us to conclude that $\psi_{-\infty}$ is an isomorphism if and only if the mate of this pasting is an isomorphism. However, since this pasting can be rewritten as 
\[
\xymatrix{
A\ar[r]^-\cong\ar[d]_-\pi&(-\infty/i_A^\lhd)\ar[r]^-{p'}\ar[d]_-\pi&A^\lhd\ar[d]^-{i_A^\lhd}\\
\bbone\ar[r]&\bbone\ar[r]_-{-\infty}&(A^\lhd)^\lhd\ultwocell\omit{}
}
\]
we can conclude the proof by applying once more (Der4) and the compatibility of mates with respect to pasting.
\end{proof}

With the aid of the inclusion $i_A^\lhd\colon A^\lhd\to(A^\lhd)^\lhd$ we can give the following characterization of \emph{limiting cones}, i.e., of objects in the essential image of the right Kan extension $(i_A)_\ast\colon\D(A)\to\D(A^\lhd)$. 

\begin{lem}\label{lem:limiting}
An object $X\in\D(A^\lhd)$ is a limiting cone if and only if the map $(i_A^\lhd)_\ast(X)_{-\infty-1}\to(i_A^\lhd)_\ast(X)_{-\infty}$ is an isomorphism in $\D(\bbone)$.
\end{lem}
\begin{proof}
The functor $i_A\colon A\to A^\lhd$ is fully faithful, and hence so is $(i_A)_\ast$. Thus a diagram $X\in\D(A^\lhd)$ is a limiting cone if and only if the adjunction unit $\eta\colon X\to (i_A)_\ast(i_A)^\ast(X)$ is an isomorphism. It follows from \cite[Lemma~1.26]{groth:ptstab} that this is the case if and only if the component $\eta_{-\infty}$ is an isomorphism when evaluated on~$X$. The lemma follows immediately from the more precise statement that there is a commutative diagram
\begin{equation}
\vcenter{
\xymatrix{
(i_A^\lhd)_\ast(X)_{-\infty-1}\ar[r]\ar[d]_-\cong&(i_A^\lhd)_\ast(X)_{-\infty}\ar[d]^-\cong\\
\lim_{(-\infty-1/i_A^\lhd)}p^\ast(X)\ar[dd]_-\cong&\lim_{(-\infty/i_A^\lhd)}p^\ast(X)\ar[d]^-\cong\\
&\lim_{(-\infty/i_A)}p^\ast (i_A)^\ast(X)\\
X_{-\infty}\ar[r]_-{\eta_{-\infty}}&(i_A)_\ast (i_A)^\ast(X)_{-\infty},\ar[u]_-\cong
}
}
\label{eq:limiting}
\end{equation}
in which the various functors denoted by $p$ are instances of projections of certain comma categories. The four morphisms in the counterclockwise direction are obtained by the calculus of mates applied to the pasting
\[
\xymatrix{
(-\infty/i_A)\ar[r]^-p\ar[d]_-\pi&A\ar[rr]^-{i_A}\ar[d]^-{i_A}&&A^\lhd\ar[d]^-=\\
\bbone\ar[d]\ar[r]_-{-\infty}&A^\lhd\ar[r]_-\cong\ultwocell\omit{}&(-\infty-1/i_A^\lhd)\ar[r]_-p\ar[d]_-\pi&A^\lhd\ar[d]^-{i_A^\lhd}\\
\bbone\ar[r]&\bbone\ar[r]&\bbone\ar[r]_-{-\infty-1}&(A^\lhd)^\lhd.\ultwocell\omit{}
}
\]
Three of the associated mates are isomorphisms (by (Der4) in two cases and once more by the homotopy finality of initial objects; see \autoref{egs:htpy}), while the remaining one is the adjunction unit $\eta_{-\infty}$. The three morphisms in the clockwise direction in \eqref{eq:limiting} are obtained from the pasting
\[
\xymatrix{
(-\infty/i_A)\ar[r]^-\cong\ar[dd]_-\pi&(-\infty/i_A^\lhd)\ar[d]_-\pi\ar[r]^-p&A^\lhd\ar[d]^-{i_A^\lhd}\\
&\bbone\ar[r]_-{-\infty}\ar[d]&(A^\lhd)^\lhd\ar[d]^-=\ultwocell\omit{}\\
\bbone\ar[r]&\bbone\ar[r]_-{-\infty-1}&(A^\lhd)^\lhd,\ultwocell\omit{}
}
\]
where the natural transformation in the bottom right square is given by the morphism $-\infty-1\to -\infty$. Two of these three morphism are easily seen to be isomorphism, and the remaining one is the structure map $(i_A^\lhd)_\ast(X)_{-\infty-1}\to (i_A^\lhd)_\ast(X)_{-\infty}$ showing up in the statement of this proposition. It is easy to check that these two pastings define the same natural transformation, so that the compatibility of mates with pasting concludes the proof. 
\end{proof}

Let $A$ be a small category and let us consider $i_A^\lhd\times\id\colon A^\lhd\times[1]\to (A^\lhd)^\lhd\times[1].$ We denote by $B$ the category obtained from $(A^\lhd)^\lhd\times[1]$ by adding a new object $l$ with a unique morphism from $(-\infty-1,0)\to l$ and a unique map from~$l$ to any object different from $(-\infty-1,0)$. Thus, precisely five objects of $B$ do not lie in $A\times[1]\subseteq B$, and the full subcategory spanned by these five objects looks like 
\begin{equation}
\vcenter{
\xymatrix{
(-\infty-1,0\ar[rd])&&\\
&l\ar[r]\ar[d]&(-\infty-1,1)\ar[d]\\
&(-\infty,0)\ar[r]&(-\infty,1).
}
}\label{eq:detect}
\end{equation}
The square $l,(-\infty,0),(-\infty-1,1),(-\infty,1)$ defines a functor $j\colon\square\to B.$ The category $B$ comes with a further functor $i\colon A^\lhd\times[1]\to B$ which is the composition of $i_A^\lhd\times\id\colon A^\lhd\times[1]\to (A^\lhd)^\lhd\times[1]$ followed by the obvious inclusion to $B.$ Note that this functor is isomorphic to the inclusion of the subcategory of $B$ obtained by removing the objects $l,(-\infty,0),$ and also $(-\infty,1)$.

\begin{lem}\label{lem:detect}
If $X\in\D(B)$ lies in the essential image of $i_\ast\colon\D(A^\lhd\times[1])\to\D(B),$ then the square $j^\ast(X)\in\D(\square)$ is cartesian.
\end{lem}
\begin{proof}
This is an immediate application of the detection lemma (\autoref{lem:detection}). In fact, since $l$ does not lie in the image of $i$ and since $B-\{l\}=(A^\lhd)^\lhd\times[1]$, we only have to show that the functor $\lrcorner\to(l/(A^\lhd)^\lhd\times[1])$ given by \eqref{eq:detect} is a left adjoint. But a right adjoint $r$ is given by $r(a,k)=r(-\infty,k)=(-\infty,k),a\in A,k\in[1],$ and $r(-\infty-1,1)=(-\infty-1,1).$
\end{proof}

The case of interest to us is $A=[1]^n_{\geq 1}$. In this case, $B$ can be identified with $(([1]^n)^\lhd\times[1])^\lhd,$ but let us be more precise about this category since some details will be used in the following proof. First, in the category $([1]^n)^\lhd$, let us denote the cone point by $-\infty-1$ and its successor by $-\infty$. Then $B$ is obtained from $([1]^n)^\lhd\times[1]$ by adjoining a new object $l$ and morphisms $(-\infty-1,0)\to l\to(-\infty,0)$ and $l\to (-\infty-1,1).$ This completes the description of the poset $B$. Note that there is again a fully faithful inclusion functor $i\colon [1]^n\times[1]\to B$ which does not hit the objects $l,(-\infty,0),(-\infty,1)$.

\begin{thm}\label{thm:morphism}
Let $X\in\D([1]^n\times [1]), n\geq 2,$ be such that the $n$-cube $X_1$ is cartesian. The $n$-cube $X_0$ is cartesian if and only if the $(n+1)$-cube $X$ is cartesian.
\end{thm}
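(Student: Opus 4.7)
The plan is to specialize the setup immediately preceding the theorem to $A = [1]^n_{\geq 1}$, so that $A^\lhd = [1]^n$ and $X$ lives in $\D(A^\lhd \times [1])$. Define $Y = i_\ast X \in \D(B)$; then \autoref{lem:detect} applies and yields that the square $j^\ast Y$ is cartesian:
\[
\xymatrix{
Y_l \ar[r] \ar[d] & Y_{(-\infty-1, 1)} \ar[d] \\
Y_{(-\infty, 0)} \ar[r] & Y_{(-\infty, 1)}.
}
\]

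First I would identify the four corners of this square using (Der4) together with the full faithfulness of $i$. Since $(-\infty-1, 0)$ and $(-\infty-1, 1)$ lie in the image of $i$ (they are $i((0,0))$ and $i((0,1))$ respectively), the counit yields $Y_{(-\infty-1, k)} = X_{(0,k)}$. For the remaining corners one inspects the comma categories directly: $((-\infty,1)/i) \cong A \times \{1\}$, $((-\infty,0)/i) \cong A \times [1]$, and, crucially, $(l/i) \cong ([1]^n \times [1]) \setminus \{(0,0)\} = [1]^{n+1}_{\geq 1}$. Together with the observation that the $[1]$-factor collapses via its initial object, this gives $Y_{(-\infty,1)} = \lim_{[1]^n_{\geq 1}} X_1$, $Y_{(-\infty,0)} = \lim_{[1]^n_{\geq 1}} X_0$, and $Y_l = \lim_{[1]^{n+1}_{\geq 1}} X$.

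Under these identifications the right vertical arrow of the square is the canonical comparison $X_{(0,1)} \to \lim_{[1]^n_{\geq 1}} X_1$ that witnesses $X_1$ cartesian, hence an isomorphism by hypothesis. In any cartesian square with one vertical side invertible the opposite side is also invertible, so the left vertical arrow $Y_l \to Y_{(-\infty, 0)}$ is an isomorphism as well.

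Finally I would verify that the structure maps $Y_{(-\infty-1, 0)} \to Y_l$ and $Y_{(-\infty-1, 0)} \to Y_{(-\infty, 0)}$, induced by the morphisms $(-\infty-1, 0) \to l$ and $(-\infty-1, 0) \to (-\infty, 0)$ in $B$, agree under the above identifications with the canonical comparison maps $X_{(0,0)} \to \lim_{[1]^{n+1}_{\geq 1}} X$ and $X_{(0,0)} \to \lim_{[1]^n_{\geq 1}} X_0$ whose invertibility characterize $X$ and $X_0$ being cartesian. Because $B$ is a poset the two structure maps together with $Y_l \to Y_{(-\infty, 0)}$ form a commuting triangle; combined with the previous step this forces $X$ to be cartesian if and only if $X_0$ is. The principal technical point is the identification $(l/i) \cong [1]^{n+1}_{\geq 1}$, which is what lets \autoref{lem:detect} do precisely the work needed to couple the cartesianness of $X$ and of $X_0$.
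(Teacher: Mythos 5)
Your proof follows the paper's own argument step for step: apply $i_*$, invoke \autoref{lem:detect} to conclude that $j^*Y$ is cartesian, use the cartesianness of $X_1$ to see that the side $Y_{(-\infty-1,1)}\to Y_{(-\infty,1)}$ is invertible, deduce that the opposite side $Y_l\to Y_{(-\infty,0)}$ is invertible, and finally chase the commuting triangle at $(-\infty-1,0)$. The only stylistic difference is that where you unwind the comma categories $((-\infty,k)/i)$ and $(l/i)$ by hand, the paper instead cites \autoref{lem:limiting} three times, and the step you flag as ``I would verify that the structure maps $\ldots$ agree with the canonical comparison maps'' is precisely what the pasting argument in the proof of \autoref{lem:limiting} establishes.
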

\begin{proof}
Let $i\colon [1]^n\times[1]\to B$ be the functor described above and let us apply the fully faithful functor $i_\ast$ to our $(n+1)$-cube $X$ in order to obtain $i_\ast(X)\in\D(B).$ The fully faithfulness of $i_\ast$ implies that $i_\ast(X)$ restricts back to the $n$-cubes $X_0$ and $X_1$. Let us now consider the following part of $i_\ast(X),$
\begin{equation}
\vcenter{
\xymatrix@-1pc{
X_{(-\infty-1,0)}\ar[dr]&&\\
&X_l\ar[d]\ar[r]&X_{(-\infty-1,1)}\ar[d]\\
&X_{(-\infty,0)}\ar[r]&X_{(-\infty,1)},
}
}
\end{equation}
which is a coherent diagram in \D of shape $\square^\lhd.$ Since the $n$-cube $X_1$ is cartesian,  \autoref{lem:limiting} implies that the map $X_{(-\infty-1,1)}\to X_{(-\infty,1)}$ is an isomorphism. An application of \autoref{lem:detect} yields that the above square is cartesian, hence also that the map $X_l\to X_{(-\infty,0)}$ is an isomorphism (\cite[Prop.~3.13]{groth:ptstab}), and we deduce that $X_{(-\infty-1,0)}\to X_l$ is an isomorphism if and only if $X_{(-\infty-1,0)}\to X_{(-\infty,0)}$ is an isomorphism. But \autoref{lem:limiting} tells us that $X_{(-\infty-1,0)}\to X_l$ is an isomorphism if and only if $X$ is cartesian and it also implies that $X_{(-\infty-1,0)}\to X_{(-\infty,0)}$ is an isomorphism if and only if $X_0$ is cartesian.
\end{proof}

With this preparation we can now deduce some interesting corollaries.

\begin{cor}\label{cor:subsquares}
An $n$-cube, $n\geq 2$, in a derivator is strongly cartesian if and only if all \emph{subsquares} are cartesian.
\end{cor}
\begin{proof}
We know already that an $n$-cube is strongly cartesian if and only if all subcubes are cartesian (\autoref{thm:subcubes}). But by \autoref{thm:morphism} this is equivalent to the assumption that all subsquares are cartesian.
\end{proof}

We also obtain the following additional characterization of stability.

\begin{cor}\label{cor:stable}
The following are equivalent for a pointed derivator \D. 
\begin{enumerate}
\item The adjunction $(\Sigma,\Omega)\colon\D(\bbone)\to\D(\bbone)$ is an equivalence.
\item The adjunction $(\cof,\fib)\colon\D([1])\to\D([1])$ is an equivalence.
\item The derivator \D is stable, i.e., a square is cartesian if and only if it is cocartesian.
\item An $n$-cube in \D, $n\geq 2,$ is strongly cartesian if and only if it is strongly cocartesian. 
\end{enumerate}
\end{cor}
\begin{proof}
The equivalence of the first three characterizations was already established in \cite{gps:mayer}, while the equivalence of (iii) and (iv) is immediate from \autoref{cor:subsquares}.
\end{proof}

An $n$-cube satisfying the equivalent conditions of being strongly cartesian and strongly cocartesian is called \textbf{strongly bicartesian}. 

There is the following immediate corollary about $[1]^n_{\geq n-1}$, the sink of valence~$n$, and $[1]^n_{\leq 1}$, the source of valence~$n$.

\begin{cor}\label{cor:tiltsourcesink}
The source of valence~$n$ and the sink of valence~$n$ are strongly stably equivalent.
\end{cor}
\begin{proof}
Let \D be a stable derivator. The fully faithful $(i_{\geq n-1})_\ast\colon\D^{[1]^n_{\geq n-1}}\to\D^{[1]^n}$ induces an equivalence onto the full subderivator of $\D^{[1]^n}$ spanned by the strongly cartesian $n$-cubes, while the left Kan extension $(i_{\leq 1})_!\colon\D^{[1]^n_{\leq 1}}\to\D^{[1]^n}$ induces an equivalence onto the full subderivator spanned by the strongly cocartesian $n$-cubes. By \autoref{cor:stable} these two subderivators are the same, and we deduce $[1]^n_{\leq 1}\sse[1]^n_{\geq n-1}$.
\end{proof}

There are the following composition and cancellation properties for (strongly) cartesian $n$-cubes. We follow the standard convention from simplicial homotopy theory in that $d^i\colon[n-1]\to [n],0\leq i\leq n,$ denotes the $i$-th \emph{coface map}, i.e., the unique monotone injective function which does not hit $i$. Moreover, the associated restriction functor is denoted by $d_i\colon\D^{[n]}\to\D^{[n-1]}$. Similarly, $s^i\colon[n+1]\to[n]$ for $0\leq i\leq n$ is the $i$-th \emph{codegeneracy map}, i.e., the unique monotone surjection hitting $i$ twice, while the associated restriction functor is denoted by $s_i$.

\begin{prop}\label{prop:compcanc}
Let \D be a derivator and let $X\in\D([1]^n\times [2])$ be such that $d_0(X)$ is (strongly) cartesian. Then $d_1(X)$ is (strongly) cartesian if and only if $d_2(X)$ is (strongly) cartesian.
\end{prop}
\begin{proof}
Let $j\colon A\to [1]^n\times[2]$ be the full subcategory spanned by all objects except $(\emptyset,0)$ and $(\emptyset,1)$. \autoref{lem:detectionplus} implies that the $n$-cubes $d_2j_\ast j^\ast(X)$ and $d_1j_\ast j^\ast(X)$ are cartesian. Thus, in order to conclude the proof it is enough to show that if $d_0(X)$ is cartesian, then $\eta\colon X\to j_\ast j^\ast(X)$ is an isomorphism under the additional assumption that either of $d_1(X)$ and $d_2(X)$ would be cartesian. But since $j$ is fully faithful, it suffices to show that the components of $\eta$ at $(\emptyset,0)$ and $(\emptyset,1)$ are isomorphisms. Using the typical arguments, one observes that the component at $(\emptyset,1)$ is an isomorphism if and only if the canonical mate associated to
\[
\xymatrix{
[1]^n_{\geq 1}\ar[r]^-\cong\ar[d]&((\emptyset,1)/j)\ar[r]^-p\ar[d]&A\ar[r]^-j\ar[d]_-j&[1]^n\times[2]\ar[d]^-=\\
\bbone\ar[r]&\bbone\ar[r]_-{(\emptyset,1)}&[1]^n\times[2]\ar[r]_-=\ultwocell\omit{}&[1]^n\times[2]
}
\] 
is an isomorphism. But this is the case since we are considering the mate expressing that $d_0(X)$ is cartesian.

It remains to show that the component of $\eta$ at $(\emptyset,0)$ is an isomorphism if $d_1(X)$ or $d_2(X)$ is cartesian. For the case of $d_1(X)$ it suffices to consider
\[
\xymatrix{
[1]^n_{\geq 1}\ar[d]\ar[r]^-{d^1}&A\ar[r]^-\cong\ar[d]&((\emptyset,0)/j)\ar[r]^-p\ar[d]&A\ar[r]^-j\ar[d]_-j&[1]^n\times[2]\ar[d]^-=\\
\bbone\ar[r]&\bbone\ar[r]&\bbone\ar[r]_-{(\emptyset,0)}&[1]^n\times[2]\ar[r]_-=\ultwocell\omit{}&[1]^n\times[2]
}
\] 
and to observe that $d^1\colon[1]\to[2]$ is final since it has a right adjoint given by the codegeneracy $s^0\colon[2]\to[1]$. As for the case of $d_2(X)$ we consider the factorization of $j$ through $j'\colon A'=[1]^n \times [2]-\{(\emptyset,0)\}\to[1]^n\times[2]$ and observe that it is sufficient to show that the canonical mate associated to
\[
\xymatrix{
[1]^n_{\geq 1}\ar[d]\ar[r]^-{d^2}&A'\ar[r]^-\cong\ar[d]&((\emptyset,0)/j')\ar[r]^-p\ar[d]&A'\ar[r]^-{j'}\ar[d]_-{j'}&[1]^n\times[2]\ar[d]^-=\\
\bbone\ar[r]&\bbone\ar[r]&\bbone\ar[r]_-{(\emptyset,0)}&[1]^n\times[2]\ar[r]_-=\ultwocell\omit{}&[1]^n\times[2]
}
\] 
is an isomorphism. Here we used the finality of $d^2\colon[1]\to[2].$ But this is the case if and only if $d_2(X)$ is cartesian.

The case of strongly cartesian $n$-cubes is now easily established. The cube $d_0(X)$ is strongly cartesian if and only if all subcubes in it are cartesian (\autoref{cor:subsquares}). In order to show that $d_1(X)$ is strongly cartesian if and only if $d_2(X)$ is strongly cartesian it suffices to compare the subsquares of these two cubes. The case that the subsquare under consideration is contained neither in $d_0(X)$ nor in $d_2(X)$ is taken care of by the first part of the proof while the other cases are trivial.
\end{proof}

\begin{cor}\label{cor:2-out-of-3}
Let \D be a stable derivator and let $X\in\D([1]^n\times[2])$. If two of the $n$-cubes $d_0(X),d_1(X),d_2(X)$ are strongly bicartesian then the same is true for the third one.
\end{cor}
\begin{proof}
This is immediate from \autoref{prop:compcanc} and \autoref{cor:stable}.
\end{proof}

\begin{rmk}
Since the inclusions $i_{\geq k}\colon[1]^n_{\geq k}\to[1]^n$ are fully faithful we can use \cite[Corollary~2.6]{groth:ptstab} to obtain \emph{parametrized} versions of the results of this sections. Moreover, using opposite derivators it is immediate that also the dual statements are correct.
\end{rmk}

\section{Abstract representation theory of the commutative square}
\label{sec:square}

In this section we use the techniques established so far in order to perform a few first steps of something that could be called \emph{abstract representation theory} of the square, the trivalent source, and the trivalent sink. First, we show these categories to be strongly stably equivalent (\autoref{thm:tiltsquare}). More importantly, in~\S\ref{subsec:symm} we construct and study some autoequivalences of related stable derivators (for example a cube root of the suspension). These alternative descriptions and additional symmetries are of interest to us for the following reasons.
\begin{enumerate}
\item They allow us to introduce extensions and refinements of the more classical \emph{Serre functors} and \emph{Auslander--Reiten translations} defined for derived categories. As an application of these functors we establish the \emph{relative fractionally Calabi--Yau property} of the trivalent source (see \S\ref{subsec:fracCY}).
\item By means of the symmetries we obtain variants of the classical Auslander--Reiten quivers associated to abstract representations of the trivalent source in an arbitrary stable derivator (see \S\ref{subsec:AR}).
\item This study implies that generalized versions of some of May's compatibility axioms \cite{may:traces} on triangulations and monoidal structures follow
from statements that are a formal consequence of stability alone and do not depend on monoidal structure. This observation is inspired by work of Keller--Neeman \cite{kellerneeman:may}, and is spelled out in \S\ref{sec:May}. 
\end{enumerate}

Note that contrary to the other examples considered in this paper so far, the commutative square is not a quiver since it is not a free category. Instead it is a quiver with a commutativity relation. It seems that 
a more systematic study of quivers with additivity relations will need a fully-fledged theory of enriched derivators. First steps in this direction will be developed in \cite{gs:enriched}.

\subsection{Symmetries of stable derivators of coherent squares}
\label{subsec:symm}

Let us start with the strong stable equivalences themselves.
We will have a use for \autoref{lem:excone} in the special case of $A=\lrcorner$ and hence $A^\lhd=\square$, yielding the homotopy exactness of the square
\begin{equation}
\vcenter{\xymatrix@-.5pc{
    \lrcorner\ar[r]^-{i_\lrcorner}\ar[d]_-{i_\lrcorner}&\square\ar[d]^-{i_\lrcorner^\lhd}\\
    \square\ar[r]_-{i_\square}&\square^\lhd.
  }}\label{eq:tiltsquare}
\end{equation}
Thus, a square in a derivator is cartesian if and only if it lies in the essential image of $i_\square^\ast(i_\lrcorner^\lhd)_\ast$.

\begin{thm}\label{thm:tiltsquare}
The source of valence three, the sink of valence three, and the commutative square are strongly stably equivalent.
\end{thm}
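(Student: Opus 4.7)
By \autoref{cor:tiltsourcesink} the source and sink of valence three are already known to be strongly stably equivalent, so it will suffice to establish one further equivalence, say $\square\sse[1]^3_{\geq 2}$ (the relation $\square\sse[1]^3_{\leq 1}$ then follows by composition). My plan is to realize both $\D^\square$ and $\D^{[1]^3_{\geq 2}}$ as pseudo-naturally equivalent to a common full subderivator of $\D^{\square^\lhd}$.

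The first half of the argument uses the hint preceding the theorem directly. Because $(i_\lrcorner^\lhd)_\ast\colon\D^\square\to\D^{\square^\lhd}$ is fully faithful (right Kan extension along a fully faithful functor), it restricts to an equivalence onto its essential image $\cE\subseteq\D^{\square^\lhd}$; the homotopy exactness of \eqref{eq:tiltsquare} then identifies $\cE$ as the full subderivator of diagrams whose restriction along $i_\square$ (the ``inner'' square) is cartesian---equivalently, in the stable setting, bicartesian. Hence $(i_\lrcorner^\lhd)_\ast$ induces an equivalence $\D^\square\simeq\cE$.

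For the second half I would embed $\D^{[1]^3_{\geq 2}}$ into $\D^{\square^\lhd}$ and show the image again equals $\cE$. Since the trivalent sink is not itself a full subcategory of $\square^\lhd$, I would factor the embedding through an auxiliary five-object poset $N$ containing both $\square$ and $[1]^3_{\geq 2}$ as full subcategories (for instance $N=([1]^3_{\geq 2})^\lhd$, which also contains $[1]^3_{\leq 1}$ via its cone structure). Combining the cosieve $[1]^3_{\geq 2}\hookrightarrow N$---which by \autoref{lem:extbyzero} realises left extension by zero at the initial vertex---with iterated applications of the detection lemmas \autoref{lem:detection} and \autoref{lem:detectionplus}, in the spirit of the arguments in \S\ref{sec:An} and \S\ref{sec:onebranch}, one assembles a chain of cocartesian completions and further Kan extensions whose composite embeds $\D^{[1]^3_{\geq 2}}$ into $\D^{\square^\lhd}$. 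Stability is used crucially to interchange the cocartesian conditions produced by these left Kan extensions with the cartesian condition characterizing $\cE$.

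The main obstacle I anticipate is bookkeeping: identifying the correct intermediate shape and verifying that the composite of Kan extensions lands in precisely $\cE$ rather than in a strictly smaller or larger subderivator. Once this is done, the chain $\D^\square\simeq\cE\simeq\D^{[1]^3_{\geq 2}}$ yields $\square\sse[1]^3_{\geq 2}$, and together with \autoref{cor:tiltsourcesink} the full theorem follows.
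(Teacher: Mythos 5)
Your opening move matches the paper: $(i_\lrcorner^\lhd)_\ast$ identifies $\D^\square$ with the full subderivator $\cE\subseteq\D^{\square^\lhd}$ of diagrams whose inner square is (bi)cartesian, using the homotopy exactness of \eqref{eq:tiltsquare}. The reduction to a single extra equivalence via \autoref{cor:tiltsourcesink} is also fine. The gap is in your second half.

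You want to realize $\D^{[1]^3_{\geq 2}}\simeq\cE$ by a chain of Kan extensions landing in $\D^{\square^\lhd}$, and since $[1]^3_{\geq 2}$ is not a full subposet of $\square^\lhd$ you propose a detour through $N=([1]^3_{\geq 2})^\lhd$. But $N$ and $\square^\lhd$ are non-isomorphic $5$-element posets (the first has longest chains of length $2$, the second a chain $-\infty<(0,0)<(0,1)<(1,1)$ of length $3$), and neither is a full subcategory of the other; so restriction or Kan extension along a single inclusion cannot carry you from $\D^N$ to $\D^{\square^\lhd}$. Moreover, the cosieve $[1]^3_{\geq 2}\hookrightarrow N$ yields left extension by zero whose image consists of diagrams vanishing at the cone point of $N$, while the sieve $[1]^3_{\leq 1}\hookrightarrow N$ yields right extension by zero whose image consists of diagrams vanishing at the terminal vertex --- two different subderivators, with no stability argument available to reconcile them inside $N$ alone. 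Hence, as stated, the chain of Kan extensions you sketch does not terminate where you want it to.

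The paper's route is both shorter and avoids this issue by never trying to put the sink itself inside $\square^\lhd$. Instead, set $A:=\square^\lhd\setminus\{(1,1)\}=(\ulcorner)^\lhd$, so that $\square^\lhd=A^\rhd$, and note that the left Kan extension $j_!\colon\D^A\to\D^{\square^\lhd}$ along the inclusion $j\colon A\to A^\rhd$ lands exactly on the diagrams whose inner square is \emph{cocartesian}. Stability makes this subderivator coincide with $\cE$, giving $\square\sse A$. Crucially, $A$ is a \emph{reorientation} of the trivalent source (one edge reversed), and this is precisely the kind of statement handled by \autoref{thm:tiltone}; that theorem gives $A\sse [1]^3_{\leq 1}\sse[1]^3_{\geq 2}$. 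So what your proposal is missing is (a) the observation that the other canonical Kan extension into $\square^\lhd$, namely $j_!$ along $A\hookrightarrow\square^\lhd$, carves out the cocartesian complement of $\cE$, and (b) the appeal to \autoref{thm:tiltone} to pass from the reoriented source $A$ to the honest source or sink. Without those two ingredients your second half does not close.
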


\begin{proof}
Using the homotopy exactness of \eqref{eq:tiltsquare}, we can simply adapt the standard representation theoretic argument (see for instance~\cite[Lemma 5.2]{kellerneeman:may}) to the language of stable derivators. The right Kan extension $(i_\lrcorner^\lhd)_\ast\colon\D^\square\to\D^{\square^\lhd}$ induces an equivalence onto the full subderivator of $\D^{\square^\lhd}$ spanned by the diagrams $X$ such that $i_\square^*(X)$ is cartesian.
Let $A\subseteq\square^\lhd$ be the full subcategory obtained by removing the final object $(1,1)$. Thus, we have isomorphisms $A=(\ulcorner)^\lhd$ and $\square^\lhd = A^\rhd$. If we denote by $j\colon A \to A^\rhd=\square^\lhd$ the inclusion, then it is easy to check that the left Kan extension $j_!\colon\D^A\to\D^{\square^\lhd}$ induces an equivalence onto the full subderivator of $\D^{\square^\lhd}$ spanned by objects $X$ such that $i_\square^*(X)$ is cocartesian. But since \D is stable the two subderivators of $\D^{\square^\lhd}$ agree and we deduce that $\square\sse A$.
Noting that $A$ is merely a reorientation of the source or sink of valence three, we can use \autoref{thm:tiltone} to get
\[
\square \sse A \sse [1]^3_{\leq 1} \sse [1]^3_{\geq 2},
\]
concluding the proof.
\end{proof}

We again stress the added generality of this result: it was well known that these quivers are derived equivalent, which we have just seen to be a shadow of equivalences of the stable derivators in the background. More importantly, the same result now also applies to \autoref{egs:derivators}.

In order to explain the relation of \autoref{thm:tiltsquare} to May's axioms, we will reinterpret the above strong stable equivalence in terms of another full subderivator. Let us first fix some notation.

\begin{notn} \label{notn:reinforced-cubes}
Viewing the ordered set $\lZ$ of integers as a small category, let $C_0 \subseteq \lZ^3$ be the full subcategory spanned by all objects $(n+\ep_1,n+\ep_2,n+\ep_3)$, where $n$ runs over the integers and $\ep_1, \ep_2, \ep_3 \in \{0,1\}$. This category can be visualized as an infinite string of cubes glued together. Further, denote by $C$ the full subcategory $C\subseteq\lZ^3$ spanned by the objects of $C_0$ and the objects 
\begin{equation}
(n,n+2,n+1),\quad (n+2,n+1,n),\quad (n+1,n,n+2)\quad\text{for}\quad n \in \lZ.
\label{eq:zeroC}
\end{equation}
One can imagine $C$ as a chain of cubes where the link between each pair of adjacent cubes is `reinforced' by three commutative squares (see \autoref{fig:reinforced-cubes-1}, page~\pageref{fig:reinforced-cubes-1}). When mentioning a \emph{small square} in $C$, we mean either a dimension $2$ subcube of a~cube $[(n,n,n),(n+1,n+1,n+1)] \subseteq C$ or one of the `reinforcing' squares in $C$.

Given a stable derivator $\D$, we will write $\D^{C,\mathrm{ex}}$ for the full subprederivator of $\D^C$ spanned by the objects $Y$ such that all small squares in $Y$ are bicartesian and all objects of $C$ not contained in $C_0$, i.e., the objects as in \eqref{eq:zeroC}, are populated by zero objects (again see \autoref{fig:reinforced-cubes-1} for an illustrative example).
\end{notn}

Now we observe the following.

\begin{prop} \label{prop:source-square-to-cubes}
In the above notation, let us consider the functors
\begin{enumerate}
\item $i\colon [1]^3_{\leq 1} \to C$ given by restriction of the codomain of the obvious embedding $[1]^3_{\leq 1} \subseteq \lZ^3$ and
\item $j\colon \square \to C$ defined as the functor which maps the objects $00, 01, 10, 11 \in \square$ to the objects $010,011,110,112 \in C$, respectively.
\end{enumerate}
If \D is a stable derivator then so is $\D^{C,\mathrm{ex}}$ and we have pseudo-natural equivalences
\[ \xymatrix@1{ \D^{[1]^3_{\leq 1}} & \D^{C,\mathrm{ex}} \ar[l]_{i^*} \ar[r]^{j^*} & \D^\square }. \]
\end{prop}

\begin{rmk}
In the notation of \autoref{fig:reinforced-cubes-1}, page~\pageref{fig:reinforced-cubes-1}, $i^*$ is the restriction to the full subcategory spanned by $s,a,b,c$ and $j^*$ is the restriction to $b,d,f,\ell$. In particular, our convention for drawing an object $(n_1,n_2,n_3)\in C$ is such that $n_1$ is the coordinate in the $a$-direction, $n_2$ the one in the $b$-direction, and $n_3$ the one in the $c$-direction. Note also that in these figures the drawing of the arrows $(n+1,n,n+1)\to(n+1,n,n+2)$ is slightly adjusted in order to avoid that the objects $(n+1,n,n+2)$ and $(n,n+2,n+1)$ overlap.
\end{rmk}

\begin{proof}
The functor $i$ factors as the composition of full inclusions
\[ \xymatrix@1{ [1]^3_{\leq 1} \ar[r]^{i_1} & C_1 \ar[r]^{i_2} & C_2 \ar[r]^{i_3} & C_3 \ar[r]^{i_4} & C } \]
where
\begin{enumerate}
\item $C_1$ contains $[1]^3_{\leq 1}$ and all $(n,n+2,n+1), (n+2,n+1,n), (n+1,n,n+2)$ with $n \ge 0$,
\item $C_2$ is obtained from $C_1$ by adding all $(n_1,n_2,n_3) \in C$ with non-negative coordinates, and
\item $C_3$ contains $C_2$ and also $(n,n+2,n+1), (n+2,n+1,n),$ and  $(n+1,n,n+2)$ for $n<0$.
\end{enumerate}
Associated to these inclusions there is the following sequence of Kan extensions
\[
\D^{[1]^3_{\leq 1}}\stackrel{(i_1)_\ast}{\to}\D^{C_1}\stackrel{(i_2)_!}{\to}\D^{C_2}\stackrel{(i_3)_!}{\to}\D^{C_3}
\stackrel{(i_4)_\ast}{\to}\D^C,
\]
each of which is fully faithful and hence defines an equivalence onto its essential image. By \autoref{lem:extbyzero}, $(i_1)_\ast$ is right extension by zero and that lemma also describes its essential image. A somewhat tedious but completely straightforward checking using intermediate factorizations of $i_2$, \autoref{lem:detection}, and \autoref{cor:subsquares} reveals that the left Kan extension $(i_2)_!$ amounts to alternatively forming a strongly cocartesian cube spanned by 
\[
\{(n+\ep_1,n+\ep_2,n+\ep_3) \mid \ep_1, \ep_2, \ep_3 = 0,1\}
\]
and then adding three cofiber sequences containing the links, both of this in the positive direction. Moreover, we claim that the essential image of $(i_2)_!$ consists precisely of the coherent diagrams $X$ such that all small squares in $C_2$ are cocartesian. To see that, we need to check that the adjunction counit $\ep\colon(i_2)_!(i_2)^*(X) \to X$ is an isomorphism for such an $X$. Thanks to (Der2), this is equivalent to checking that $\ep_c\colon (i_2)_!(i_2)^*(X)_c \to X_c$ is an isomorphism for each $c \in C$. But this follows precisely by checking the $c$-component of the corresponding adjunction counit for the intermediate factorization of $i_2$ through the smallest subsieve of $C_2$ containing $C_1$ and $c$. This proves the claim about $(i_2)_!$. Using \autoref{lem:extbyzero} again, we see that $(i_3)_!$ is left extension by zero and this lemma also determines its essential image. Finally, using arguments dual to the case of $(i_2)_!$, one observes that $(i_4)_\ast$ amounts to alternatively adding strongly cartesian cubes and fiber sequences in the negative direction and we can also determine its essential image.

As an upshot, using the stability of~\D, we obtain an equivalence between $\D^{[1]^3_{\leq 1}}$ and the full subprederivator $\D^{C,\mathrm{ex}}$ of $\D^C$. In particular, $\D^{C,\mathrm{ex}}$ is a stable derivator,
\[ F = (i_4)_\ast(i_3)_!(i_2)_!(i_1)_\ast\colon \D^{[1]^3_{\leq 1}} \to \D^{C,\mathrm{ex}} \]
is an equivalence, and the restriction $i^* = i_1^*i_2^*i_3^*i_4^*$ is an inverse of $F$. 

In order to establish the result for $j$, consider first the restriction of $[1]^3_{\geq 2} \subseteq \lZ^3$ to $k\colon [1]^3_{\geq 2} \to C$. If follows from the description of $\D^{C,\mathrm{ex}}$ that the composition $k^* F\colon \D^{[1]^3_{\leq 1}} \to \D^{[1]^3_{\geq 2}}$ is the equivalence from~\autoref{cor:tiltsourcesink}. Moreover, the composition $j^* F\colon \D^{[1]^3_{\leq 1}} \to \D^\square$ is equivalent to the composition of $k^* F$ with the strong stable equivalence $\D^{[1]^3_{\geq 2}} \to \D^\square$, dual to the one explicitly described in the proof of \autoref{thm:tiltsquare}. Thus, $j^\ast F$ is an equivalence and hence also $j^*\cong (j^* F) i^*$.\end{proof}

In particular, since $\D^{C,\mathrm{ex}}$ is a stable (hence pointed) derivator, we have the (abstractly defined) suspension endomorphism $\Sigma\colon \D^{C,\mathrm{ex}} \to \D^{C,\mathrm{ex}}$. In order to be able to compute with this $\Sigma$, we establish the following lemma. It implies that the suspension for $\D^{C,\mathrm{ex}}$ is simply the restriction of the suspension for $\D^C$.

\begin{lem} \label{lem:DCex-exact}
In the above notation, the inclusion $\D^{C,\mathrm{ex}} \to \D^C$ is exact.
\end{lem}

\begin{proof}
First, we note that each $\D^{C,\mathrm{ex}}(A)$ contains the zero object of $\D^C(A)$ which is simply the constant $A$-shaped diagram taking value $0$. For this it suffices to observe that constant $n$-cubes, i.e., coherent diagrams in the image of the restriction functor $\pi^\ast\colon\D(\bbone)\to\D([1]^n)$ are bicartesian, which is true for example by a combination of \autoref{cor:subsquares} and \cite[Proposition~3.12]{groth:ptstab}. Thus, the inclusion $\D^{C,\mathrm{ex}} \to \D^C$ preserves zero objects. 

It remains to prove that $\D^{C,\mathrm{ex}} \to \D^C$ also preserves cartesian squares (which are also cocartesian by stability). Let $X \in \D^{C,\mathrm{ex}}(\lrcorner)$ and consider the right Kan extension $Y = (i_\lrcorner)_\ast(X) \in \D^C(\square)$ taken in $\D^C$. It suffices to prove that $Y$ in fact belongs to $\D^{C,\mathrm{ex}}(\square)$, i.e., we have to check certain vanishing conditions and bicartesianness conditions. Appealing again to \cite[Proposition~3.12]{groth:ptstab}, the vanishing conditions are satisfied, so it remains to take care of the bicartesianness conditions. To this end, let $k\colon \square \to C$ be one of the small squares from \autoref{notn:reinforced-cubes} whose exactness determines the objects of $\D^{C,\mathrm{ex}}$ among those in $\D^C$. Viewing $Y$ as an object of $\D(C \times \square)$ and $X$ as an object of $\D(C \times \lrcorner)$, we need to prove that the square $(k \times 11)^\ast(Y) \in \D(\square)$ is cartesian. However, by assumption $(k \times \id_\square)^\ast(Y) \in \D(\square \times \square)$ is of the form
\[
(k \times \id_\square)^\ast(\id_C \times i_\lrcorner)_\ast(X) \cong
(\id_\square \times i_\lrcorner)_\ast(k \times \id_\lrcorner)^\ast(X) \cong
(i_\lrcorner \times i_\lrcorner)_\ast(Z),
\]
where the first isomorphism is given as in \autoref{egs:htpy}(ii), the second one is obtained from the fact that $X$ was chosen in $\D^{C,\mathrm{ex}}$, and where $Z \in \D(\lrcorner \times \lrcorner)$ is the restriction of $(k \times \id_\lrcorner)^\ast(X)$ along $i_\lrcorner \times \id_\lrcorner\colon \lrcorner \times \lrcorner \to \square \times \lrcorner$. The conclusion follows by writing the inclusions $i_\lrcorner \times i_\lrcorner$ as the composition
\[
\lrcorner \times \lrcorner \longrightarrow \lrcorner \times \square \longrightarrow \square \times \square
\]
and considering the corresponding right Kan extensions.
\end{proof}

The first noteworthy consequence of the construction from \autoref{prop:source-square-to-cubes}, as indicated in \autoref{fig:reinforced-cubes-1} at the end of the text, is the following. The equivalence~$F$ sends an object $X\in\D^{[1]^3_{\leq 1}}$ with underlying diagram
\begin{equation}
\vcenter{
\xymatrix@-.7pc{
&& a \\
s \ar[urr] \ar[rr] \ar[drr] && b \\
&& c
}
}
\label{eq:X}
\end{equation}
to a diagram $Y\in\D^{C,\mathrm{ex}}$ looking like \autoref{fig:reinforced-cubes-1}. In particular, the `doubly infinite' incoherent diagram of $Y \in \D^{C,\mathrm{ex}}$ contains up to isomorphisms and (de)suspensions only twelve objects $a,b,c,d,e,f,k,\ell,m,s,t,u$ of $\D$ (see \autoref{rmk:twelve}). In order to prove this, we consider the automorphism $\mathrm{sh}\colon C \to C$ which sends $ (n_1,n_2,n_3)$ to $(n_1+1,n_2+1,n_3+1)$. The restriction morphism $\mathrm{sh}^*\colon \D^C \to \D^C$ shifts a diagram one cube to the left, and it is immediate that it restricts to an induced morphism $\mathrm{sh}^*\colon \D^{C,\mathrm{ex}} \to \D^{C,\mathrm{ex}}$. 

\begin{prop} \label{prop:susp-shift}
In the above notation, for every stable derivator \D there is a canonical isomorphism
\[ (\mathrm{sh}^*)^3 \cong \Sigma^2\colon \D^{C,\mathrm{ex}}\to \D^{C,\mathrm{ex}}. \]
\end{prop}

\begin{proof}
Let $(j^*,G)\colon \D^{C,\mathrm{ex}} \rightleftarrows \D^\square$ be the equivalence from \autoref{prop:source-square-to-cubes}. Since $j^*$ is exact and as such commutes with the suspension, it suffices to construct a canonical isomorphism $j^* (\mathrm{sh}^*)^3 G \cong \Sigma^2\colon\D^\square\to\D^\square$. Clearly also $j^* (\mathrm{sh}^*)^3 = j_2^*$, where $j_2 = \mathrm{sh}^3 \circ j\colon \square \to C$ sends $00, 01, 10, 11 \in \square$ to the objects $343,344,443,445 \in C$, respectively. Let us also consider the functor $j_1\colon \square \to C$ which sends $00, 01, 10, 11 \in \square$ to $221,223,232,233\in C$, respectively.

We will obtain the desired natural isomorphisms in two steps. In the first step, we construct a natural isomorphism $j_1^* G \cong \Sigma\colon\D^\square\to\D^\square$. For that purpose, let $q\colon \square \times \square \to C$ be the functor defined as follows. We put $q(x,00) = j(x)$ and $q(x,11) = j_1(x)$ for each $x \in \square$, while on the remaining two corners we define $q$ by
\begin{enumerate}
\item $q(00,01) = 021$, $q(01,01) = 021$, $q(10,01) = 132$, $q(11,01) = 132$, and
\item $q(00,10) = 210$, $q(01,10) = 213$, $q(10,10) = 210$, $q(11,10) = 213$,
\end{enumerate}
and one easily checks that this is well-defined. Note that the values $q(x,01)$ and $q(x,10)$ belong to the objects \eqref{eq:zeroC}. Consequently, for every $Y\in\D^{C,\mathrm{ex}}$ the squares $(\id\times 01)^\ast q^\ast(Y)\in\D^\square$ and $(\id\times 10)^\ast q^\ast(Y)\in\D^\square$ are trivial. In order to show that $(\id \times 11)^*q^*(Y)$ is canonically isomorphic to the suspension of $(\id \times 00)^*q^*(Y)$, let us consider the functor $x \times \id\colon \square \to \square \times \square$ for any $x \in \square$. Then it is easily checked using \autoref{prop:compcanc} that for any diagram $Y$ in $\D^{C,\mathrm{ex}}$ the restriction $(x \times \id)^* q^*(Y)$ to an object of $\D^\square$ is cocartesian. For example, in the case of $x=01$ this amounts to considering the following pasting of bicartesian squares
\[
\xymatrix{
d\ar[r]\ar[d]&u\ar[r]\ar[d]&l\ar[r]\ar[d]&\Sigma a\ar[r]\ar[d]&0\ar[d]\\
0\ar[r]&k\ar[r]&\Sigma c\ar[r]&\Sigma t\ar[r]&\Sigma d.
}
\]
Combining these observations with \cite[Corollary~3.14]{groth:ptstab}, this implies that the square $(\id \times 11)^*q^*(Y)$ is canonically isomorphic to the suspension of $(\id \times 00)^*q^*(Y)$, which is to say that we have natural isomorphisms $j_1^* G \cong \Sigma j^\ast G\cong \Sigma\colon\D^\square\to\D^\square$.

Using essentially the same argument, we prove that for any $Y \in \D^{C,\mathrm{ex}}$, there is a canonical natural isomorphism $j_2^*(Y) \cong \Sigma j_1^*(Y)$. This time we use the functor $q'\colon \square \times \square \to C$ defined by $q'(x,00) = j_1(x)$ and $q'(x,11) = j_2(x)$ for each $x \in \square$, and by
\begin{enumerate}
\item $q'(x,01) = 243$ for each $x \in \square$, and
\item $q'(00,10) = 321$, $q'(01,10) = 324$, $q'(10,10) = 432$, $q'(11,10) = 435$.
\end{enumerate}
Again, the values $q'(x,01)$ and $q'(x,10)$ belong to the objects \eqref{eq:zeroC}, and the squares $(\id\times 01)^\ast q'^\ast(Y)$ and $(\id\times 10)^\ast q'^\ast(Y)$ thus vanish. Similar arguments as in the previous case allow us to deduce that $(\id \times 11)^*q'^*(Y)$ is canonically isomorphic to the suspension of $(\id \times 00)^*q'^*(Y)$, and this gives us a natural isomorphism $j_2^\ast G \cong \Sigma j_1^\ast G\colon\D^\square\to\D^\square$. As an upshot, we obtain natural isomorphisms
\[
j^\ast(\mathrm{sh}^\ast)^3G=j_2^\ast G\cong\Sigma j_1^\ast G\cong \Sigma^2\colon \D^\square\to\D^\square,
\]
and hence $(\mathrm{sh}^\ast)^3\cong\Sigma^2\colon \D^{C,\mathrm{ex}}\to \D^{C,\mathrm{ex}}$ as intended.
\end{proof}

\begin{rmk}\label{rmk:twelve}
Note that in the course of the proof of \autoref{prop:susp-shift}, we have also proved that $\Sigma b$, $\Sigma d$, $\Sigma f$ and $\Sigma\ell$ are located in \autoref{fig:reinforced-cubes-1} exactly where depicted (this was the first step of the construction). By symmetry, the same follows for the suspensions of $a,c,e,k,$ and $m$, so \autoref{fig:reinforced-cubes-1} really depicts the shape of the diagram of an object of $\D^{C,\mathrm{ex}}$. At present the decoration $\Sigma t$ is just a label for an object, but this will be addressed in the course of the following construction. For now we note that \autoref{prop:susp-shift} together with its proof thus shows that up to suspensions and isomorphisms there are only twelve objects in \autoref{fig:reinforced-cubes-1}. For a representation theoretic perspective on this see the introduction to \S\ref{subsec:AR}.
\end{rmk}

Another look at \autoref{fig:reinforced-cubes-1} reveals that although all suspensions and desuspensions of $a,b,c,d,e,f,k,\ell,m$ are in the infinite picture, only every other suspension of $s,t,u$ is there. In order to obtain odd suspensions of $s,u$ and even suspensions of $t$, we must pass to \autoref{fig:reinforced-cubes-2}. In fact, we can do so functorially by constructing an endomorphism $T\colon \D^{C,\mathrm{ex}} \to \D^{C,\mathrm{ex}}$ which will be a square root of the endomorphism $\mathrm{sh}^*\colon \D^{C,\mathrm{ex}} \to \D^{C,\mathrm{ex}}$ and a cube root of $\Sigma\colon \D^{C,\mathrm{ex}} \to \D^{C,\mathrm{ex}}$. This needs a little preparation and will be obtained as \autoref{prop:sqrt-shift}.

Let $X$ be an object of $\D^{[1]^3_{\leq 1}}$ with underlying diagram \eqref{eq:X}. We will functorially construct from $X$ a coherent diagram $Z$ as in \autoref{fig:Z}, whose shape $E$ is described as follows. We start with the subposet $E_0 \subseteq \lZ^4$ spanned by the hypercube $[1]^4$ and the objects $0211,2101,1021,2111,1211,$ and $1121$. Note that if we restrict to the objects of $E_0$ with the last coordinate equal to $1$, forgetting the last coordinate results into a full embedding of this full subcategory of $E_0$ into $C$ of \autoref{notn:reinforced-cubes}. The objects $0211,2101,1021$ then embed as vertices of the `reinforcing' squares which do not belong to $C_0$, and the objects $2111,1211,1121$ belong to the `next cube'. Now $E$ is defined as the poset constructed from $E_0$ by adjoining two new objects $(-\infty,0), (-\infty,1)$ and morphisms $(-\infty,0) \to 0000$, $(-\infty,1) \to 0001,$ and $(-\infty,0) \to (-\infty,1)$. In \autoref{fig:Z}, $(-\infty,0), (-\infty,1)$ correspond to the two leftmost objects, $E_0$ to the other objects, and the curly arrows indicate that for each vertex of the upper cube there is a unique morphism to the corresponding vertex of the lower cube.

\begin{figure}
\centering
\xymatrix@-.7pc{
&&& a \oplus c \ar[drr] \ar[rr] && a \ar[drr]
\\
s \ar[r] \ar[ddd] & a \oplus b \oplus c \ar[urr] \ar@{.>}[rr] \ar[drr] \ar[ddd] &&
a \oplus b \ar@{.>}[urr]|\hole \ar@{.>}[drr]|\hole &&
c \ar[rr] &&
0 \ar[ddd]
\\
&&& b \oplus c \ar[urr] \ar[rr] \ar@{~>}[d] && b \ar[urr] \ar@{~>}[d] &&& 0 \ar[dr]
\\
&&& e \ar[drr] \ar[rr] && k \ar[drr] \ar[urrr]|(.64)\hole &&&& \Sigma a
\\
0 \ar[r] & t \ar[urr] \ar@{.>}[rr] \ar[drr] &&
f \ar@{.>}[urr]|\hole \ar@{.>}[drr]|\hole &&
m \ar[rr] \ar[ddrr] &&
\Sigma s \ar[urr] \ar[rr] \ar[drr] &&
\Sigma b
\\
&&& d \ar[urr] \ar[rr] &&
\ell \ar@{-}+R!(0,1);+(5.35,3.1)\ar@{.>}+R!(5.3,4.2);+(15.5,8.75) \ar@{-}+R;+(7.5,0)\ar@{.>}+R!(8.3,0);+(17,0) &&
0 \ar@{.}+R!(0,1);+(8.8,4.4)\ar@{->}+R!(9.8,5.6);+(18,8.75) &&
\Sigma c
\\
&&&&&&& 0 \ar[urr]
}
\caption{The coherent diagram $Z$ in $\D^E$.}
\label{fig:Z}
\end{figure}
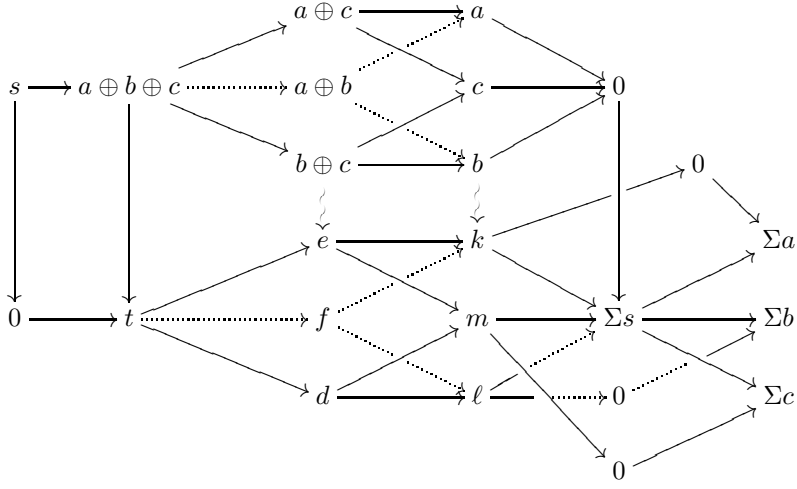

Before giving the construction, let us emphasize that we will also have to show that the objects in \autoref{fig:Z} labeled $t,d,e,f,k,l,m$ agree up to suspensions and isomorphisms with the corresponding objects in \autoref{fig:reinforced-cubes-1}. Now, we obtain $Z$ in $\D^E$ from $X$ in $\D^{[1]^3_{\leq 1}}$ in six steps. Let us consider the fully faithful functor $v\colon [1]^3_{\leq 1} \to E$ given by 
\[
000\mapsto(-\infty,0),\quad 100\mapsto 1100,\quad 010\mapsto 0110,\quad 001\mapsto 1010.
\]
In \autoref{fig:Z}, the positions of $v(x)$, where $x$ runs over $000,100,010,001$, are labeled by $s,a,b,c$ in this order. This functor factors as a composition
\[ \xymatrix@1{ [1]^3_{\leq 1} \ar[r]^{v_1} & E_1 \ar[r]^{v_2} & E_2 \ar[r]^{v_3} & E_3 \ar[r]^{v_4} & E_4 \ar[r]^{v_5} & E_5 \ar[r]^{v_6} & E } \]
where $v_1$ is obtained from $v$ by restricting the codomain, $v_2,v_3,v_4,v_5,$ and $v_6$ are inclusions, and the respective intermediate categories are the following full subcategories of~$E$.
\begin{enumerate}
\item $E_1$ contains besides $(-\infty,0)$-$1100$-$0110$-$1010$ also the objects $1110$.
\item $E_2$ is obtained from $E_1$ by adding all remaining objects of the upper cube $[1]^3\times\{0\}$.
\item $E_3$ contains all objects of $E_2$ and the object $(-\infty,1)$.
\item $E_4$ is obtained from $E_3$ by adding all objects of the lower cube $[1]^3\times\{1\}$.
\item $E_5$ contains $E_4$ and the objects $0211,2101,1021$.
\end{enumerate}
Associated to these fully faithful inclusions, we consider the Kan extension functors
\begin{equation}
\D^{[1]^3_{\leq 1}}\stackrel{(v_1)_\ast}{\to}\D^{E_1}\stackrel{(v_2)_\ast}{\to}\D^{E_2}\stackrel{(v_3)_\ast}{\to}\D^{E_3}
\stackrel{(v_4)_!}{\to}\D^{E_4}\stackrel{(v_5)_\ast}{\to}\D^{E_5}\stackrel{(v_6)_!}{\to}\D^E
\label{eq:H}
\end{equation}
each of which is fully faithful hence inducing an equivalence onto its essential image. We will denote the composition of these functors by $H\colon\D^{[1]^3_{\leq 1}} \to \D^E$.

By \autoref{lem:extbyzero}, $(v_1)_\ast$ is right extension by zero. An application of \autoref{lem:detectionplus} shows that $(v_2)_\ast$ amounts to adding a strongly bicartesian cube. In particular, this upper cube contains all partial products of $a,b,c$ and projections between them. Since we are in the stable context, these products are actually biproducts (see~\cite[Proposition~4.7]{groth:ptstab}). Using \autoref{lem:extbyzero} again, we deduce that $(v_3)_\ast$ is right extension by zero. A repeated application of \autoref{lem:detection} implies that $(v_4)_!$ forms cofibers of the eight maps $(-\infty,0)\to(x,0),x\in[1]^3$. Also $(v_5)_\ast$ is right extension by zero and finally, with \autoref{lem:detectionplus} in mind, $(v_6)_!$ amounts to adding three more cofibers. As an upshot, using \autoref{cor:2-out-of-3} and \autoref{cor:subsquares}, we obtain an equivalence between $\D^{[1]^3_{\leq 1}}$ and the full subderivator of $\D^E$ spanned by all diagrams satisfying the above vanishing conditions coming from extensions by zero and making all subcubes of dimension $2,3,$ and $4$ strongly bicartesian. 

We next show that the objects in \autoref{fig:Z} labeled $t,d,e,f,k,l,m$ match with the corresponding objects in \autoref{fig:reinforced-cubes-1}. For the objects $k,l,m$ this amounts to finding suitable chains of bicartesian squares in \autoref{fig:reinforced-cubes-1} and then to use the pasting property of bicartesian squares. For example, for $m$ it suffices to consider the chain
\[
\xymatrix{
s\ar[r]\ar[d]&a\ar[r]\ar[d]&f\ar[r]\ar[d]&0\ar[d] \\
c\ar[r]&e\ar[r]&u\ar[r]&m,
}
\] 
and similarly in the remaining two cases. For the objects $d,e,f$ this is obtained by considering the first cube in \autoref{fig:reinforced-cubes-1} together with a threefold application of the Mayer--Vietoris theorem for stable derivators (\cite[Theorem~6.1]{gps:mayer} or \cite[Proposition~2.4]{stpo:co-t-str}). Finally, for the object $t$ it is enough to consider the third cube in \autoref{fig:reinforced-cubes-1} and to compare with the lower cube in \autoref{fig:Z}. This finally explains why one object in \autoref{fig:reinforced-cubes-1} was denoted by $\Sigma t$.

In order to conclude the functorial construction of \autoref{fig:Z} it remains to show that in that diagram suspensions show up as indicated. This final step is settled by the following lemma in which we consider the functor $z\colon [1]^3_{\leq 1} \to E$ sending $(n_1,n_2,n_3)$ to $(n_1+1,n_2+1,n_3+1,1)$, i.e., the functor pointing at the objects where we would like to find the suspensions.

\begin{lem} \label{lem:susp-in-Z}
In the above notation, for every stable derivator~\D there is a canonical isomorphism
$z^* H \cong \Sigma\colon\D^{[1]^3_{\leq 1}}\to\D^{[1]^3_{\leq 1}}$.
\end{lem}

\begin{proof}
We define a functor $q\colon[1]^3_{\leq 1}\times \square\to E$ by $q(-,00)=v,$ $q(-,11)=z,$ and
\begin{enumerate}
\item $q(000,01)=(-\infty,1),q(100,01)=2101,q(010,01)=0211,q(001,01)=1021,$ and
\item $q(x,10)=1110$ for all $x\in [1]^3_{\leq 1}$.
\end{enumerate}
One easily checks that $q$ is a well-defined functor and that for $Z$ as in \autoref{fig:Z} the restrictions 
$(\id\times 10)^\ast q^\ast Z,(\id\times 01)^\ast q^\ast Z\in\D^{[1]^3_{\leq 1}}$ vanish. Moreover, using suitable chains of bicartesian squares in $Z$ the squares $(x\times\id)^\ast q^\ast Z\in\D^\square$ are shown to be bicartesian for all $x\in[1]^3_{\leq 1}$. By \cite[Corollary~3.14]{groth:ptstab} this implies that $q^\ast Z$ considered as an object of $(\D^{[1]^3_{\leq 1}})^\square$ is bicartesian, and hence that $(\id\times 11)^\ast q^\ast Z$ is canonically isomorphic to the suspension of $(\id\times 00)^\ast q^\ast Z$. Putting this together we obtain canonical natural isomorphisms
\[
z^\ast H\cong \Sigma v^\ast H\cong \Sigma\colon \D^{[1]^3_{\leq 1}}\to\D^{[1]^3_{\leq 1}},
\]
as intended.
\end{proof}

Now we shall construct the promised square root~$T$ of $\mathrm{sh}^*\colon \D^{C,\mathrm{ex}} \to \D^{C,\mathrm{ex}}$ and cube root of $\Sigma\colon \D^{C,\mathrm{ex}} \to \D^{C,\mathrm{ex}}$. Consider the embedding $w\colon [1]^3_{\leq 1} \to E$ sending $(n_1,n_2,n_3)$ to $(n_1,n_2,n_3,1)$. This corresponds to the positions $t,e,f,$ and $d$ in \autoref{fig:Z}. If we still denote by $H$ the composition \eqref{eq:H}, then we define the endomorphism $T' = w^*H\colon \D^{[1]^3_{\leq 1}}\to \D^{[1]^3_{\leq 1}}$. By construction, if $X\in\D^{[1]^3_{\leq 1}}$ looks like \eqref{eq:X}, then $T'(X)$ has underlying diagram
\begin{equation}
\vcenter{
\xymatrix@-.7pc{
&& e \\
t \ar[urr] \ar[rr] \ar[drr] && f \\
&& d.
}
}
\label{eq:T'}
\end{equation}
Finally, a conjugation of $T'$ by the equivalences $(i^\ast,F)\colon \D^{C,\mathrm{ex}} \rightleftarrows \D^{[1]^3_{\leq 1}}$ from \autoref{prop:source-square-to-cubes} yields the desired endomorphism
\begin{equation}
 T=F T' i^\ast\colon \D^{C,\mathrm{ex}} \to \D^{[1]^3_{\leq 1}} \to \D^{[1]^3_{\leq 1}} \to \D^{C,\mathrm{ex}}.
\label{eq:T} 
\end{equation}
We also recall from that same proposition that there is a further equivalence $(j^\ast,G)\colon\D^{C,\mathrm{ex}} \rightleftarrows \D^\square$ since it will be used in the proof of the following result.

\begin{prop} \label{prop:sqrt-shift}
In the above notation, $T$ sends an object $Y \in \D^{C,\mathrm{ex}}$ with underlying diagram as in \autoref{fig:reinforced-cubes-1} to an object with underlying diagram as in \autoref{fig:reinforced-cubes-2}. Moreover, there are canonical isomorphisms
\[
T \cong (\mathrm{sh}^*)\inv \Sigma \cong \Sigma (\mathrm{sh}^*)\inv,  \qquad
T^2 \cong \mathrm{sh}^*, \qquad \textrm{and} \qquad  
T^3 \cong \Sigma
\]
of endomorphisms of $\D^{C,\mathrm{ex}}$. In particular, $T$ commutes both with $\mathrm{sh}^*$ and $\Sigma$ up to canonical isomorphisms. 
\end{prop}

\begin{proof}
Let $Y \in \D^{C,\mathrm{ex}}$ have underlying diagram as in \autoref{fig:reinforced-cubes-1}. By the reasoning prior to this proposition, the diagram $T'i^\ast (Y)$ looks like \eqref{eq:T'}. A combination of \autoref{lem:susp-in-Z} and \autoref{prop:susp-shift} implies that $T(Y)$ has an underlying diagram as in \autoref{fig:reinforced-cubes-2}, establishing the first claim. From this and \autoref{lem:DCex-exact} it immediately follows that we have the first of the canonical isomorphisms $T \cong (\mathrm{sh}^*)\inv\Sigma \cong \Sigma (\mathrm{sh}^*)\inv$. The second canonical isomorphism exists because $(\mathrm{sh}^*)\inv$ is an equivalence of derivators, and hence commutes with Kan extensions. From this it also follows easily that $T$ commutes with $(\mathrm{sh}^*)\inv$ and $\Sigma$ up to canonical isomorphisms. Combining these results with \autoref{prop:susp-shift} we calculate
\[
T^2 \cong \big((\mathrm{sh}^*)\inv\Sigma\big)^2 \cong (\mathrm{sh}^*)^{-2}\Sigma^2 \cong (\mathrm{sh}^*)^{-2}(\mathrm{sh}^*)^3=\mathrm{sh}^*
\]
and
\[
T^3 = T^2 T \cong \mathrm{sh}^* \big((\mathrm{sh}^*)\inv \Sigma\big) \cong \Sigma,
\]
and these calculations hence conclude the proof.
\end{proof}

\begin{cor} \label{cor:sqrt-shift}
Let $Y \in \D^{C,\mathrm{ex}}$ be as in \autoref{fig:reinforced-cubes-1}. Then, for each $n \in \lZ$ there are coherent cofiber sequences
\[
\xymatrix@-.5pc{
\Sigma^n s \ar[r] \ar[d] & \Sigma^n(a \oplus b \oplus c) \ar[d] \\
0 \ar[r] & \Sigma^n t, \\
}
\qquad
\xymatrix@-.5pc{
\Sigma^n t \ar[r] \ar[d] & \Sigma^n(d \oplus e \oplus f) \ar[d] \\
0 \ar[r] & \Sigma^n u, \\
}
\qquad
\xymatrix@-.5pc{
\Sigma^n u \ar[r] \ar[d] & \Sigma^n(k \oplus \ell \oplus m) \ar[d] \\
0 \ar[r] & \Sigma^{n+1} s. \\
}
\]
\end{cor}

\begin{proof}
The first cofiber sequences for arbitrary values of $n$ are easily obtained by considering suitable suspensions of \autoref{fig:Z}. By \autoref{prop:sqrt-shift} the morphism $T$ commutes with $\mathrm{sh}^\ast$ and $\Sigma$ and the explicit form of \autoref{fig:reinforced-cubes-1} and \autoref{fig:reinforced-cubes-2} thus settles the remaining two cases.
\end{proof}

\subsection{The fractionally Calabi--Yau property}
\label{subsec:fracCY}

Having constructed all the autoequivalences of $\D^\square \simeq \D^{[1]^3_{\le1}} \simeq \D^{C,\mathrm{ex}}$ in the last section, we will explain their interpretation in more classical situations. This allows us to introduce generalizations of certain classically important functors (Serre functors and Auslander--Reiten translations) to the context of an arbitrary stable derivator. It seems to the authors that this extension deserves a more systematic discussion, but in order to keep the paper at a reasonable length we refrain from including such a discussion here.

Suppose that $k$ is a field and $\cT$ is a $k$-linear triangulated category such that each  $\nhom_\cT(x,y)$ is finite dimensional over~$k$. Then a \textbf{Serre functor} on $\cT$ (see~\cite[\S I.1]{RvdB02} and also~\cite{BK89}) is an autoequivalence $\lS\colon \cT \to \cT$ for which there is an isomorphism
\begin{equation} \label{eq:Serre}
\nhom_\cT(x,y) \stackrel{\cong}\longrightarrow \big(\nhom_\cT(y,\lS x)\big)^*
\end{equation}
which is natural both in $x$ and $y$, and where $(-)^*$ stands for the vector space dual. Note that by Yoneda, a Serre functor is unique up to unique isomorphism if it exists.

In some sense the existence of a Serre functor says that $\cT$ is almost a self-dual category. The importance of a Serre functor is that it among others implies by~\cite[Theorem I.2.4]{RvdB02} the existence of almost split triangles which allow us to extract a great deal of information about the structure of $\cT$. We will get back to this point in \S\ref{subsec:AR}. There are two large classes of examples of triangulated categories with a Serre functor (see \cite[Examples 3.2]{BK89}), namely
\begin{enumerate}
\item bounded derived categories of coherent sheaves over a smooth projective variety over $k$ (\cite[\S III.7]{Har77}), and
\item bounded derived categories of finitely generated modules over a finite dimensional algebra of finite global dimension (\cite[\S3.6]{happel:fdalgebra}, \cite{KrLe06}).
\end{enumerate}
We will be mainly interested in case (ii) as this in particular applies to $\cT = D^b(kQ)$ for $Q = [1]^3_{\le1}$, which is none other than the category of compact objects of $\D_k^Q(\bbone)$. If $A$ is a finite dimensional algebra over $k$, then $A^*$ is canonically an $A$-$A$-bimodule and the Serre functor on $D^b(A)$ can be constructed as $\lS = A^* \Lotimes_A -$; see~\cite[\S3.6]{happel:fdalgebra}.

Following~\cite[\S2]{KeSch12}, one can introduce a relative version of the latter if $k$ is an arbitrary commutative ring rather than a field. Suppose that $A$ is a dg algebra over $k$ whose underlying $k$-module is finitely generated and projective. The \textbf{Serre functor} relative to $k$ on $D(A)$ is defined as $\lS = A^* \Lotimes_A -$, where in this case $(-)^* = \Rhom_k(-,k)$. Then we have by~\cite[Proposition 2.2]{KeSch12} for each $x,y \in D(A)$, $y$ compact, an isomorphism
\begin{equation} \label{eq:rel-Serre}
\Rhom_A(x,y) \stackrel{\cong}\longrightarrow \big(\Rhom_A(y,\lS x)\big)^*
\end{equation}
which is natural in both variables. If, moreover, $A^*$ is compact in $D(A)$, then $\lS$ restricts to an autoequivalence of the full subcategory of compact objects of $D(A)$. If $Q$ is a finite quiver without oriented cycles and $A = kQ$ is viewed as a dg algebra concentrated in degree $0$, then $kQ^*$ is compact by the proof of~\cite[Lemma 1]{CB96} (which works for any commutative ground ring) and again $D(kQ) \simeq \D_k^Q(\bbone)$. In particular, the isomorphism from \eqref{eq:rel-Serre} in this case generalizes the one from \eqref{eq:Serre}.

For $Q = [1]^3_{\leq1}$, we can now give a definition of a Serre functor for $\D^Q$ for any stable derivator $\D$ which specializes to the above definitions for the derivator $\D_k$ of a field or a commutative ring $k$. For convenience we will also introduce the Auslander--Reiten translation, which classically differs from a Serre functor by a suspension; see~\cite[\S I.2]{RvdB02}. As an application, we will later prove the (relative) fractionally Calabi--Yau property of $D(kQ)$ for any commutative ring $k$ (see \autoref{thm:fracCYring}). In fact, this will be an immediate consequence of an abstract version of this statement valid in any stable derivator (see \autoref{thm:frac-CY}).

Let $C$ again be the category as in \autoref{notn:reinforced-cubes}. Denote by $r\colon C \to C$ the automorphism of $C$ obtained by restriction from the cyclic rotation of coordinates $\lZ^3 \to \lZ^3$, $(n_1,n_2,n_3) \mapsto (n_3,n_1,n_2)$. Then the autoequivalence $r^*\colon \D^C \to \D^C$ restricts to $r^*\colon \D^{C,\mathrm{ex}} \to \D^{C,\mathrm{ex}}$ by the definition of $\D^{C,\mathrm{ex}}$ in \autoref{notn:reinforced-cubes}. Moreover, $r^*\colon \D^{C,\mathrm{ex}} \to \D^{C,\mathrm{ex}}$ commutes up to canonical isomorphisms with $\mathrm{sh}^*$, $\Sigma$, and by \autoref{prop:sqrt-shift} also with $T$ from \eqref{eq:T}.

\begin{defn}\label{defn:ARS}
Let $\D$ be a stable derivator. 
\begin{enumerate}
\item The \textbf{Serre endomorphism} is $\lS = r^* \mathrm{sh}^*\colon\D^{C,\mathrm{ex}}\to\D^{C,\mathrm{ex}}$.
\item The \textbf{Auslander-Reiten translation} is $\tau = r^* T\inv\colon\D^{C,\mathrm{ex}}\to\D^{C,\mathrm{ex}}$.
\end{enumerate}
\end{defn}

Note that by \autoref{prop:sqrt-shift} there is a canonical isomorphism $\lS \cong \tau\Sigma$, exactly as mentioned above for the classical situation. The relation to the previous concepts is given by the following proposition.

\begin{prop} \label{prop:relate-Serre}
Let $k$ be a commutative ring, $\D_k$ the standard stable derivator for $k$, and let $Q = [1]^3_{\leq1}$. Then the component $\lS_\bbone\colon \D_k^{C,\mathrm{ex}}(\bbone) \to \D_k^{C,\mathrm{ex}}(\bbone)$ of the Serre endomorphism is isomorphic to the conjugate of $(kQ)^* \Lotimes_{kQ} - \colon \D^Q_k(\bbone) \to \D^Q_k(\bbone)$ by the equivalence $(i^*,F)\colon \D_k^{C,\mathrm{ex}} \rightleftarrows \D_k^{[1]^3_{\leq 1}}$ from \autoref{prop:source-square-to-cubes}. More precisely,
\[ (kQ)^* \Lotimes_{kQ} - \; \cong \; i^*_\bbone \lS_\bbone F_\bbone.  \]
\end{prop}

Before proving the proposition, we recall some standard facts about $kQ$-modules which can be found for instance in~\cite{CB96}.

\begin{lem} \label{lem:lattices}
Let $f\colon k \to \ell$ be a homomorphism of commutative rings and $Q$ a finite quiver without oriented cycles.
\begin{enumerate}
\item If $x \in \Mod{(kQ)}$ is finitely generated projective as a $k$-module, then the projective dimension of $x$ in $\Mod{(kQ)}$ is at most one. Moreover, the canonical homomorphism
\[
\Ext^i_{kQ}(x,y) \otimes_k \ell \longrightarrow \Ext^i_{\ell Q}(x \otimes_k \ell, y \otimes_k \ell)
\]
is an isomorphism for $i=0,1$ and each $y \in \Mod{(kQ)}$.
\item The canonical homomorphism $kQ \to \nend_{kQ}(kQ^*)$ which sends $r \in kQ$ to the right multiplication by $r$ is an isomorphism, and $\Ext^i_{kQ}(kQ^*,kQ^*) = 0$ for all $i>0$.
\end{enumerate}
\end{lem}

\begin{proof}
(i) This is essentially just \cite[Lemma 1]{CB96}. To each vertex $v$ of $Q$ we have attached, by the definition of the path algebra $kQ$, a formal path $e_v$ of length zero which becomes an idempotent element of $kQ$ (see~\cite[III.1]{ARS:representation}).  Let $S$ be the $k$-subalgebra of $kQ$ generated by all the trivial paths $e_1,\dots,e_n$, so that $S \cong k \times \dots \times k$ ($n$ times). Let $B \subseteq kQ$ be the $S$-$S$-subbimodule generated by the arrows of $Q$. Then the arrows actually form a $k$-basis of $B$ and $B$ is projective over $S$ both from the left and the right.

Consider now the sequence of $kQ$-$kQ$-bimodules
\begin{equation} \label{eq:bimod-res}
0 \longrightarrow kQ \otimes_S B \otimes_S kQ \stackrel{d_1}\longrightarrow kQ \otimes_S kQ \stackrel{d_0}\longrightarrow kQ \longrightarrow 0
\end{equation}
where the maps $d_0,d_1$ act by $d_0(p \otimes q) = pq$ and $d_1(p \otimes \alpha \otimes q) = p \otimes \alpha q - p\alpha \otimes q$ for all paths $p,q$ and arrows $\alpha$ in $Q$. We claim that this is a projective bimodule resolution of $kQ$. First of all, $kQ \otimes_S kQ \cong kQ \otimes_S S \otimes_S kQ \cong \bigoplus_{i=1}^n kQ \otimes_S e_iS \otimes_S kQ$ as bimodules and each summand is projective since $kQ \otimes_S e_iS \otimes_S kQ \cong kQ \cdot e_i \otimes_k e_i \cdot kQ$. Similarly
\[
kQ \otimes_S B \otimes_S kQ \cong \bigoplus_{\alpha\colon i \to j} kQ \otimes_S e_j B e_i \otimes_S kQ \cong  \bigoplus_{\alpha\colon i \to j} kQ \cdot e_j \otimes_k e_i \cdot kQ,
\]
where the sums run over all arrows of $Q$.
Hence both $kQ \otimes_S B \otimes_S kQ$ and $kQ \otimes_S kQ$ are projective $kQ$-$kQ$-bimodules and it remains to check the exactness of \eqref{eq:bimod-res}. This follows from the fact that there exists an explicitly given null-homotopy $s_0\colon kQ \to kQ \otimes_S kQ$ and $s_1\colon kQ \otimes_S kQ \to kQ \otimes_S B \otimes_S kQ$ of \eqref{eq:bimod-res} viewed as a complex of left $kQ$-modules. Namely, we can define $s_0(p) = p \otimes 1$ and $s_1(p \otimes \alpha_1\cdots\alpha_m) = \sum_{i=1}^m p\alpha_1\cdots\alpha_{i-1} \otimes \alpha_i \otimes \alpha_{i+1}\cdots\alpha_m$ for all paths $p$ and $q = \alpha_1\cdots\alpha_m$, where $\alpha_1,\dots,\alpha_m$ are the arrows of which $q$ is composed. This proves the claim.

Suppose now that $x$ is a left $kQ$-module whose underlying $k$-module is projective. Then $x$ is also projective as an $S$-module, since $x = e_1\cdot x \oplus \cdots \oplus e_n \cdot x$ as a $k$-module and each component of $S \cong k \times \dots \times k$ acts on the corresponding summand $e_i \cdot x$. Thus, applying $-\otimes_{kQ} x$ to \eqref{eq:bimod-res} yields an exact sequence of left $kQ$-modules, and it is straghtforward to check that it is a projective resolution of $x$ of length one. The last claim about the canonical isomorphisms follows directly from the existence of the latter resolution and the fact that the canonical homomorphism $\nhom_{kQ}(p,y) \otimes_k \ell \to \nhom_{\ell Q}(p \otimes_k \ell, q \otimes_k \ell)$ is an isomorphism for each finitely generated projective $kQ$-module $p$.

(ii) For $\nend_{kQ}(kQ^*)$ we just inspect the chain of canonical isomorphisms
\[ kQ \cong \nhom_k(kQ^*,k) \cong \nhom_k(kQ \otimes_{kQ} kQ^*, k) \cong \nhom_{kQ}(kQ^*,\nhom_k(kQ,k)). \]
More precisely, the first isomorphism sends $r \in kQ$ to the evaluation map $\epsilon_r\colon kQ \to k$, $f \mapsto f(r)$, the second homomorphism is the identification of $kQ \otimes_{kQ} kQ^*$ with $kQ^*$ via $r' \otimes f \mapsto r' \cdot f$, where $r' \cdot f$ is defined via $(r'\cdot f)(r) = f(rr')$, and the last isomorphism is the standard adjunction. Thus, the composition sends $r$ to the endomorphism $e_r\colon kQ^* \to kQ^*$, $f \mapsto f \cdot r$, where by definition $(f\cdot r)(r') = f(rr')$.

For the extensions, note that $\Ext^i_{kQ}(kQ^*,kQ^*) = 0$ for all $i>1$ by part (i). Moreover, if $f\colon k \to \ell$ is a projection of $k$ to a quotient field $\ell$, then we know that
\[
\Ext^i_{kQ}(kQ^*,kQ^*) \otimes_k \ell \cong \Ext^i_{\ell Q}(\ell Q^*,\ell Q^*) = 0,
\]
where the second equality follows from the fact that $\ell Q^*$ is an injective $\ell Q$-module. Hence also the localization $\Ext^i_{kQ}(kQ^*,kQ^*) \otimes_k k_{\mathfrak m}$ for ${\mathfrak m} = \ker f$ vanishes by the Nakayama lemma and, as we have proved this for every maximal ideal $\mathfrak m$ of $k$, also $\Ext^i_{kQ}(kQ^*,kQ^*)$ vanishes.
\end{proof}

Now we can finish the proof of the proposition.

\begin{proof}[Proof of \autoref{prop:relate-Serre}]
By \autoref{lem:lattices}(ii), $kQ^*$ is isomorphic to a perfect complex in $D(kQ)$ and we obtain a canonical isomorphism $\Rhom_{kQ}(kQ^*,kQ^*) \cong kQ$. Note also that $kQ^*$ is a compact generator for $D(kQ)$. Indeed, a version of \autoref{lem:lattices}(i) for right $kQ$-modules provides us with a projective resolution $0 \to p_1 \to p_0 \to kQ^* \to 0$ in $\Mod{(kQ\op)}$ and if we apply $(-)^*$ to this sequence, we obtain an exact sequence of left $kQ$-modules
\[ 0 \longrightarrow kQ \longrightarrow p_1^* \longrightarrow p_0^* \longrightarrow 0 \]
with $p_0^*,p_1^*$ in the additive closure of $kQ^*$. Thus, $kQ^*$ generates $kQ$ and so the whole of $D(kQ)$. It follows essentially from the results of Happel and Rickard (see for instance~\cite{rickard:derived-funct} or \cite[Proposition 8.1.4]{keller:construction-eq}) that 
\[
\Rhom_{kQ}(kQ^*,-)\colon D(kQ) \longrightarrow D(\nend_{kQ}(kQ^*)) = D(kQ)
\]
is a triangle equivalence and its left adjoint $(kQ)^* \Lotimes_{kQ} -$ must be its inverse. Given the definition of $\lS$ and $\tau$, we must equivalently prove that $\tau\inv = (r^*)^2T\colon \D^{C,\mathrm{ex}} \to \D^{C,\mathrm{ex}}$ is transferred via $(i^*,F)$ to the endofunctor $\Sigma \Rhom_{kQ}(kQ^*,-)$.

The core of the proof rests on the construction of a suitable resolution of $kQ^*$ which will allow us to compute $\Rhom_{kQ}(kQ^*,-)$. Note the category of  $kQ$-$kQ$-bimodules is equivalent to $(\Mod kQ)^{Q\op}$, the category of representations of $Q\op$ in $\Mod kQ$. The equivalence sends an object of $(\Mod kQ)^{Q\op}$
\[
\xymatrix@-.7pc{
&& a' \ar[dll]_{\alpha'} \\
s' && b' \ar[ll]|{\beta'} \\
&& c' \ar[ull]^{\gamma'}.
}
\]
to $s' \oplus a' \oplus b' \oplus c'$, where the left action of the vertices and arrows of $Q$ is given by the left $kQ$-module structure of $s' \oplus a' \oplus b' \oplus c'$, and the right action of $Q$ is given by the morphisms $\alpha',\beta',\gamma'$ in the representation. Similarly, the category of chain complexes of $kQ$-$kQ$-bimodules is equivalent to $\Ch(kQ)^{Q\op}$.

Now we will describe a particular element of $\Ch(kQ)^{Q\op}$ which will turn out to be, upon the just mentioned identification, a shift of a resolution of $kQ^*$ which is cofibrant enough for the computation of $\Rhom_{kQ}(kQ^*,-)$. Let us fix some notation for this purpose. We will label the vertices and arrows of $Q$ as follows,
\[
\xymatrix@-.7pc{
&& a \\
s \ar[urr]^\alpha \ar[rr]|\beta \ar[drr]_\gamma && b \\
&& c.
}
\]
For each vertex $v$ we have a projective left $kQ$-module $p_v = kQ \cdot e_v$. In our case we can write the corresponding representations in $(\Mod k)^Q$ very explicitly as follows, where the maps are the identity maps whenever possible and zero maps otherwise:
\begin{equation} \label{eq:projs-source}
\xymatrix@-.7pc{ {\phantom{k}} \\ {\phantom{k}p_s\colon} }
\xymatrix@-.7pc{
& k \\
k \ar[ur] \ar[r] \ar[dr] & k \\
& k
}
\quad\quad
\xymatrix@-.7pc{ {\phantom{k}} \\ {\phantom{0}p_a\colon} }
\xymatrix@-.7pc{
& k \\
0 \ar[ur] \ar[r] \ar[dr] & 0 \\
& 0
}
\quad\quad
\xymatrix@-.7pc{ {\phantom{0}} \\ {\phantom{k}p_b\colon} }
\xymatrix@-.7pc{
& 0 \\
0 \ar[ur] \ar[r] \ar[dr] & k \\
& 0
}
\quad\quad
\xymatrix@-.7pc{ {\phantom{0}} \\ {\phantom{0}p_c\colon} }
\xymatrix@-.7pc{
& 0 \\
0 \ar[ur] \ar[r] \ar[dr] & 0 \\
& k
}
\end{equation}
We also have some distinguished morphisms between these projective representations. We have the morphism $p_a = kQ \cdot e_a \to kQ \cdot e_s = p_s$ given by the right multiplication by $\alpha$ (as both $p_a,p_s$ are subsets of $kQ$) and similarly we have $p_b \to p_s$ given by the right multiplication by $\beta$ and $p_c \to p_s$ given by the multiplication by $\gamma$. In terms of the representations in~\eqref{eq:projs-source}, these are simply the obvious inclusions. Now we can write the promised complex $X$ of $kQ$-bimodules as an element of $\Ch(kQ)^{Q\op}$ where the maps are the obvious ones: identities, split inclusions, and the right multiplications by arrows and their coproducts:
\begin{equation} \label{eq:res-kQ*}
\xymatrix{
\ar@{}[]|\cdots
& 0 \ar[dl] \ar[rr] &
& p_b\!\oplus\!p_c \ar[dl] \ar[rr] &
& p_s \ar[dl] \ar[rr] &
& 0 \ar[dl] \ar@{}[r]|\cdots &
\\
0 \ar@/^1pc/[rr] & 0 \ar[l] \ar@/_1pc/[rr]|(.62)\hole &
p_a\!\oplus\!p_b\!\oplus\!p_c \ar@/^1pc/[rr] & p_a\!\oplus\!p_c \ar[l] \ar@/_1pc/[rr]|(.67)\hole &
p_s \ar@/^1pc/[rr] & p_s \ar[l] \ar@/_1pc/[rr]|(.66)\hole &
0 & 0 \ar[l] \ar@{}[r]|\cdots &
\\
\ar@{}[]|\cdots
& 0 \ar[ul] \ar[rr] &
& p_a\!\oplus\!p_b \ar[ul] \ar[rr] &
& p_s \ar[ul] \ar[rr] &
& 0 \ar[ul] \ar@{}[r]|\cdots &
}
\end{equation}
The only two non-zero representations of $Q\op$ are located in homological degrees $0$ and $-1$. As a complex of left modules, this is simply
\[
\xymatrix@1{ \cdots \ar[r] & 0 \ar[r] & p_a^{\oplus 3} \oplus p_b^{\oplus 3} \oplus p_c^{\oplus 3} \ar[r] & p_s^{\oplus 4} \ar[r] & 0 \ar[r] & \cdots }
\]
In particular, it is a bounded complex of finitely generated projective left modules and $\Rhom_{kQ}(X,-) \cong \Hom_{kQ}(X,-)$, where $\Hom_{kQ}\colon \Ch(kQ \otimes kQ\op) \times \Ch(kQ) \to \Ch(kQ)$ stands for the total Hom of complexes. Suppose now that $Y$ is a $kQ$-module given as representation by
\[
\xymatrix@-.7pc{
&& a \\
s \ar[urr]^\alpha \ar[rr]|\beta \ar[drr]_\gamma && b \\
&& c.
}
\]
Abusing notation slightly, we denote the $k$-modules and $k$-linear maps by the same symbols as vertices and arrows of $Q$. Note that $\nhom_{kQ}(p_s,Y) \cong s$ canonically via $(f\colon p_s \to Y) \mapsto f(e_s)$, and similarly for $a,b,c$. Hence $\Hom_{kQ}(X,Y) \in \Ch(kQ)$ has, when viewed as an object of $\Ch(k)^Q$, the following form
\begin{equation} \label{eq:Rhom-kQ^*}
\xymatrix{
\ar@{}[]|\cdots
& 0  \ar@{<-}[rr] &
& b\!\oplus\!c \ar@{<-}[rr]^{\left(\begin{smallmatrix} \beta \\ \gamma \end{smallmatrix}\right)} &
& s \ar@{<-}[rr] &
& 0 \ar@{}[r]|\cdots &
\\
0 \ar[ur] \ar[r] \ar[dr] \ar@{<-}@/^1pc/[rr] & 0 \ar@{<-}@/_1pc/[rr]|(.62)\hole &
a\!\oplus\!b\!\oplus\!c \ar[ur] \ar[r] \ar[dr] \ar@{<-}@/^1pc/[rr]^(.75){\left(\begin{smallmatrix} \alpha \\ \beta \\ \gamma \end{smallmatrix}\right)} &
a\!\oplus\!c \ar@{<-}@/_1pc/[rr]|(.67)\hole_{\left(\begin{smallmatrix} \alpha \\ \gamma \end{smallmatrix}\right)} &
s \ar[ur] \ar[r] \ar[dr] \ar@{<-}@/^1pc/[rr] & s \ar@{<-}@/_1pc/[rr]|(.65)\hole &
0 \ar[ur] \ar[r] \ar[dr] & 0 \ar@{}[r]|\cdots &
\\
\ar@{}[]|\cdots
& 0 \ar@{<-}[rr] &
& a\!\oplus\!b \ar@{<-}[rr]_{\left(\begin{smallmatrix} \alpha \\ \beta \end{smallmatrix}\right)} &
& s \ar@{<-}[rr] &
& 0 \ar@{}[r]|\cdots &
}
\end{equation}
Indeed, the $Q\op$ representations in the components of the complex~\eqref{eq:res-kQ*} capture the right $kQ$-modules structure of $X$, which is transferred to the left module structure of $\Hom_{kQ}(X,Y)$. Since the maps in the representation in degree $0$ of $X$ are split inclusions, the corresponding maps in degree $0$ of~\eqref{eq:Rhom-kQ^*} are split projections. Similarly, the maps in degree $1$ of~\eqref{eq:Rhom-kQ^*} are the identities on $s$ and the horizontal maps have components $\alpha,\beta,\gamma$ as depicted. If $Y$ is a complex of left $kQ$-modules rather than a $kQ$-module, then~\eqref{eq:Rhom-kQ^*} depicts a bicomplex of $kQ$-modules and $\Hom_{kQ}(X,Y) \in \Ch(kQ)$ is none other than its totalization. Comparing this to \autoref{fig:Z} and using that the totalization is the cofiber, we see that we precisely get a complex left $kQ$-modules which, in the notation of the figure, can be depicted as
\[
\xymatrix@-.7pc{
&& d \\
t \ar[urr] \ar[rr] \ar[drr] && e \\
&& f.
}
\]
As this identification is clearly functorial, we have $\Rhom_{kQ}(X,-) \cong i^*(r^*)^2TF = i^*\tau\inv F$.

Thus, it remains to prove that $X$ is isomorphic to $\Sigma\inv kQ^*$ in the derived category of $kQ$-$kQ$-bimodules. To this end, recall that we have described the horizontal maps in \eqref{eq:res-kQ*} very explicitly. In particular we know that they are all inclusions, so that $X$ has non-zero homology only in degree $-1$. One also easily computes that the homology is of the form
\begin{equation} \label{eq:kQ^*-as-rep}
\xymatrix@-.7pc{
&& q_a \ar[dll] \\
q_s && q_b \ar[ll] \\
&& q_c \ar[ull].
}
\end{equation}
where the left $kQ$-modules $q_s,q_a,q_b,q_c$ correspond to the representations
\[
\xymatrix@-.7pc{ {\phantom{0}} \\ {\phantom{k}q_s\colon} }
\xymatrix@-.7pc{
& 0 \\
0 \ar[ur] \ar[r] \ar[dr] & k \\
& 0
}
\quad\quad
\xymatrix@-.7pc{ {\phantom{k}} \\ {\phantom{k}q_a\colon} }
\xymatrix@-.7pc{
& k \\
k \ar[ur] \ar[r] \ar[dr] & 0 \\
& 0
}
\quad\quad
\xymatrix@-.7pc{ {\phantom{0}} \\ {\phantom{k}q_b\colon} }
\xymatrix@-.7pc{
& 0 \\
k \ar[ur] \ar[r] \ar[dr] & k \\
& 0
}
\quad\quad
\xymatrix@-.7pc{ {\phantom{0}} \\ {\phantom{k}q_c\colon} }
\xymatrix@-.7pc{
& 0 \\
k \ar[ur] \ar[r] \ar[dr] & 0 \\
& k
}
\]
and the maps in \eqref{eq:kQ^*-as-rep} can be identified with the obvious projections. Analogously to the description of indecomposable injective $kQ$-modules from~\cite[\S III.1, p.~54]{ARS:representation} in the case when $k$ is a field, for general $k$ we obtain a $k$-basis $e_s^*,e_a^*,e_b^*,e_c^*,\alpha^*,\beta^*,\gamma^*$ for $kQ^*$ with explicitly defined left and right $kQ$-actions. This allows us to compute that $q_s,q_a,q_b,q_c$ are isomorphic as left $kQ$-modules to $kQ^* \cdot e_s,kQ^* \cdot e_a,kQ^* \cdot e_b$ and $kQ^* \cdot e_c$, respectively, and that \eqref{eq:kQ^*-as-rep} is indeed the representation from $(\Mod kQ)^{Q\op}$ corresponding to $kQ^*$.
\end{proof}

Now that we have given a meaning to the Serre endomorphism $\lS\colon \D^{C,\mathrm{ex}} \to \D^{C,\mathrm{ex}}$ for some specific stable derivators $\D$, we will focus on the so-called fractionally Calabi--Yau property. To start with, let us collect the following theorem which now is a simple computation using our previous results.

\begin{thm} \label{thm:frac-CY}
Let $\D$ be a stable derivator, let $\D^{C,\mathrm{ex}}$ be the induced stable derivator from \autoref{notn:reinforced-cubes}, and let $\Sigma, \lS\colon \D^{C,\mathrm{ex}} \to \D^{C,\mathrm{ex}}$ be the suspension and the Serre endomorphism, respectively. Then we have a canonical isomorphism
\[ \lS^3 \cong \Sigma^2. \]
\end{thm}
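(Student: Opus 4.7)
The plan is to observe that $\lS^3$ essentially decouples into powers of $r^*$ and of $\mathrm{sh}^*$, both of which can be computed directly. The key structural observation is that the cyclic rotation $r$ and the diagonal shift $\mathrm{sh}$ commute as automorphisms of $\lZ^3$ (both are restrictions of endomorphisms of $\lZ^3$, and $(n_3+1,n_1+1,n_2+1) = (n_3,n_1,n_2) + (1,1,1)$), and hence they commute as automorphisms of the subposet $C \subseteq \lZ^3$. Consequently the induced restriction endomorphisms $r^*$ and $\mathrm{sh}^*$ of $\D^{C,\mathrm{ex}}$ commute up to the canonical coherence isomorphism.

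First I would record this commutation, so that $\lS^3 = (r^* \mathrm{sh}^*)^3 \cong (r^*)^3 (\mathrm{sh}^*)^3$. Next I would note that $r$ is by definition the cyclic permutation of the three coordinates, hence has order three as an automorphism of $C$, so $(r^*)^3 \cong \id_{\D^{C,\mathrm{ex}}}$. Finally, the nontrivial input has already been done: \autoref{prop:susp-shift} provides a canonical isomorphism $(\mathrm{sh}^*)^3 \cong \Sigma^2$. Chaining the three isomorphisms
\[
\lS^3 \;\cong\; (r^*)^3(\mathrm{sh}^*)^3 \;\cong\; \id \circ (\mathrm{sh}^*)^3 \;\cong\; \Sigma^2
\]
yields the statement.

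There is no real obstacle here, since all the hard work is hidden in \autoref{prop:susp-shift}; the only mild point to verify is that the commutation of $r^*$ with $\mathrm{sh}^*$ is natural enough to produce an honest isomorphism of iterated endomorphisms (rather than just an isomorphism on the nose at one level), but this is immediate from the fact that both are restriction morphisms along commuting automorphisms of $C$, so the coherence isomorphism comes from the $2$-functoriality of $\D$ applied to the equality $r \circ \mathrm{sh} = \mathrm{sh} \circ r$ of functors $C \to C$.
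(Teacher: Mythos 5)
Your proof is correct and matches the paper's argument exactly: both use the commutation of $r$ and $\mathrm{sh}$ as automorphisms of $C$, the identity $r^3 = \id_C$, and then invoke \autoref{prop:susp-shift} for $(\mathrm{sh}^*)^3 \cong \Sigma^2$. The extra remark about $2$-functoriality of $\D$ supplying the coherence isomorphism is a reasonable way to make the commutation precise, and is implicit in the paper.
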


\begin{proof}
By definition, $\lS = r^* \mathrm{sh}^*$. Since $r$ and $\mathrm{sh}$ commute and since $r^3 = \id_C$, we have $\lS^3 \cong (\mathrm{sh}^*)^3$ and further $(\mathrm{sh}^*)^3 \cong \Sigma^2$ by \autoref{prop:susp-shift}.
\end{proof} 

In order to put the above result into a wider context, suppose that $k$ is a field, $\cT$ is a $k$-linear triangulated category with finite dimensional $\nhom_\cT(-,-)$ and that $\lS\colon \cT \to \cT$ is a Serre functor defined via \eqref{eq:Serre}. Then $\cT$ is said to be \textbf{fractionally Calabi--Yau} if there are integers $m \in \lZ$ and $n > 0$ such that $\lS^n \cong \Sigma^m$ (see~\cite[\S8.2]{keller:orbit} or~\cite{vRoos12}). In such a case, $\cT$ is also said to have \textbf{Calabi--Yau dimension}~$\frac mn$. Note that it may happen that $m$ and $n$ are not coprime, but the common factor cannot be canceled~\cite[Example 5.3]{vRoos12}. Thus, the fraction $\frac mn$ is a priori only formal and written in the non-simplified form. However, given the motivation from geometry and also some formal properties, it makes sense to consider $\frac mn$ as an actual rational number---thus the notation.

In our case, suppose that $Q = [1]^3_{\leq1}$ and that $k$ is a field. Then $D^b(kQ)$ is well-known to be fractionally Calabi--Yau of dimension $\frac 23$ \cite[Example 8.3(2)]{keller:orbit}. A proof can be found in~\cite[Theorem 4.1]{mi-ye:picard-groups}. One can also wonder about a relative version of the statement when $k$ is an arbitrary commutative ring. It does not seem easy to generalize the proof from \cite{mi-ye:picard-groups} as it heavily depends on the fact that $kQ$ is a hereditary algebra, but it can be quickly deduced from what we have done so far. This is to indicate the potential of our methods.

\begin{thm} \label{thm:fracCYring}
Let $k$ be a commutative ring, $Q = [1]^3_{\leq1}$ be a standalone trivalent source, and let $\lS = kQ^* \Lotimes_{kQ} -\colon D(kQ) \to D(kQ)$ be the relative Serre functor as defined in~\cite{KeSch12}. Then $\lS^3 \cong \Sigma^2$. In other words, $D(kQ)$ is ``relative fractionally Calabi--Yau of dimension $\frac 23$''.
\end{thm}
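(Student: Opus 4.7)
The plan is to deduce this theorem as a direct consequence of the abstract fractionally Calabi--Yau statement in \autoref{thm:frac-CY} applied to the standard stable derivator $\D_k$, combined with the identification of the abstract Serre endomorphism with the classical one provided by \autoref{prop:relate-Serre}.

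More precisely, I would proceed as follows. First, specialize \autoref{thm:frac-CY} to $\D = \D_k$ to obtain a canonical isomorphism $\lS^3 \cong \Sigma^2$ of endomorphisms of the stable derivator $\D_k^{C,\mathrm{ex}}$. Evaluating at $\bbone$ yields a natural isomorphism $\lS_\bbone^3 \cong \Sigma_\bbone^2$ of exact endofunctors of the triangulated category $\D_k^{C,\mathrm{ex}}(\bbone)$. Next, invoke the equivalence $(i^*,F)\colon \D_k^{C,\mathrm{ex}} \rightleftarrows \D_k^{[1]^3_{\le 1}}$ from \autoref{prop:source-square-to-cubes}, which at level $\bbone$ gives an exact equivalence between $\D_k^{C,\mathrm{ex}}(\bbone)$ and $\D_k^Q(\bbone) \simeq D(kQ)$. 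Since $F_\bbone$ is exact, it commutes with suspension up to canonical isomorphism, so conjugating $\lS_\bbone^3 \cong \Sigma_\bbone^2$ by $F_\bbone$ yields $(i^*_\bbone \lS_\bbone F_\bbone)^3 \cong \Sigma^2$ as endofunctors of $D(kQ)$.

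Finally, by \autoref{prop:relate-Serre}, there is a canonical isomorphism $i^*_\bbone \lS_\bbone F_\bbone \cong kQ^* \Lotimes_{kQ} -$, which is precisely the relative Serre functor $\lS$ of Keller--Scherotzke on $D(kQ)$. Substituting this identification into the previous display produces the desired isomorphism $\lS^3 \cong \Sigma^2$.

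There is no real obstacle in this proof since the hard work has been done upstream: the abstract Calabi--Yau relation is established at the derivator level in \autoref{thm:frac-CY}, and the bridge to the classical Serre functor is provided by \autoref{prop:relate-Serre}. The only mild subtlety worth a line in the write-up is to note that the statement and its proof are insensitive to the choice of a strict model representing $kQ^* \Lotimes_{kQ} -$, since the isomorphism in \autoref{prop:relate-Serre} is canonical and compatible with iteration; equivalently, both sides are computed as functors on the derived category, where the relation $\lS^3 \cong \Sigma^2$ is an equality of isomorphism classes of exact endofunctors.
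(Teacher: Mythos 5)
Your proof is correct and takes essentially the same route as the paper: both deduce the statement by combining \autoref{thm:frac-CY} (the abstract Calabi--Yau relation $\lS^3\cong\Sigma^2$ on $\D_k^{C,\mathrm{ex}}$) with \autoref{prop:relate-Serre} (the identification of the abstract Serre endomorphism with $kQ^*\Lotimes_{kQ}-$ via the equivalence $(i^*,F)$). You just spell out the conjugation and exactness-of-the-equivalence bookkeeping that the paper leaves implicit.
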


\begin{proof}
We know from \autoref{prop:relate-Serre} that $kQ^* \Lotimes_{kQ} -$ is isomorphic to the base component $\lS_\bbone$ of the abstract Serre endomorphism $\lS\colon \D_k^{C,\mathrm{ex}} \to \D_k^{C,\mathrm{ex}}$. Hence the conclusion follows from \autoref{thm:frac-CY}.
\end{proof}

\subsection{Relation to Auslander--Reiten theory}
\label{subsec:AR}

Now we show how to organize the previous results with the help of Auslander--Reiten theory. This in particular involves a far-reaching generalization of the results by Keller and Neeman~\cite{kellerneeman:may} to the context of arbitrary stable derivators.

Let us first sketch the classical role of Auslander--Reiten theory. As mentioned above, if $k$ is a field and $Q$ is a finite quiver without oriented cycles, then $D^b(kQ)$ has a Serre functor $\lS$ and therefore also the \emph{Auslander--Reiten translation} $\tau = \Sigma\lS$ and so-called almost split triangles (also known as Auslander--Reiten triangles); see~\cite[\S3]{happel:fdalgebra} or~\cite[Section~I]{RvdB02}. An amazing consequence proved by Happel in~\cite[Proposition 4.6]{happel:fdalgebra} is that for a Dynkin quiver $Q$, which is in particular the case for our trivalent source $Q = [1]^3_{\leq1}$, one can explicitly describe $D^b(kQ)$ in terms of generators and relations. More precisely, every object of $D^b(kQ)$ decomposes as a direct sum of indecomposable objects and this decomposition is unique up to isomorphism and reordering. Therefore, it suffices to describe a skeleton $\ind D^b(kQ)$ of indecomposable objects in $D^b(kQ)$, and Happel gave an explicit presentation of $\ind D^b(kQ)$ as a quotient of a free category over a quiver, the so-called \emph{Auslander--Reiten quiver} of $\ind D^b(kQ)$, modulo certain relations.

The Auslander--Reiten quiver for $\ind D^b(kQ)$ for the trivalent source can be found in~\cite[\S5.7, p.~356]{happel:fdalgebra}, \cite[p.~560]{kellerneeman:may} or~\cite[Fig.~5.2, p.~98]{tilting}. A well-known fact in this case is that $\ind D^b(kQ)$ contains precisely $12$ suspension orbits, each of which is represented by a $kQ$-module. The latter follows from the description of indecomposable $kQ$-modules~\cite[\S3]{gabriel:unzerlegbar} and the fact that every object of $D^b(kQ)$ is isomorphic to its homology~\cite[Lemma~4.1]{happel:fdalgebra}. Of course, we will obtain the same Auslander--Reiten quiver and $12$ suspension orbits for $\ind D^b(kQ')$ if $Q' = [1]^3_{\geq2}$ is a trivalent sink, and also for $\ind D^b(k\square)$. This follows from the well-known triangle equivalences which we have generalized in \autoref{thm:tiltsquare}.

Starting with a stable derivator $\D$ and an object $X$ in $\D^Q$ with underlying diagram
\[
\xymatrix@-.7pc{
&& a \\
s \ar[urr] \ar[rr] \ar[drr] && b \\
&& c,
}
\]
we have previously also constructed $12$ suspension orbits in the coherent diagrams \autoref{fig:reinforced-cubes-1} and \autoref{fig:reinforced-cubes-2}. The relation to Happel's results mentioned above is as follows. We consider $\D = \D_{kQ\op}$ and we take for $X \in \D^Q(\bbone)$ the $Q$ shaped diagram of projective right $kQ$-modules
\[
\xymatrix@-.7pc{
&& p'_{a} \\
p'_{s} \ar[urr] \ar[rr] \ar[drr] && p'_b \\
&& p'_c,
}
\]
where $p'_v = e_v \cdot kQ$ stands for the projective module corresponding to vertex $v$. The maps between the projective modules are given by the left multiplication by the corresponding arrows of $Q$. In fact, we can identify $D^Q(\bbone) = D\big((\Mod kQ\op)^Q\big)$ with the derived category of $kQ$-$kQ$-bimodules as in the proof of \autoref{prop:relate-Serre}, and under this identification $X$ is none other than $kQ$ viewed as the bimodule over itself. If we construct \autoref{fig:reinforced-cubes-1} and \autoref{fig:reinforced-cubes-2} for $Y = F(X)$, where $F$ is from \autoref{prop:source-square-to-cubes}, the twelve objects $a,b,c,d,e,f,k,\ell,m,s,t,u \in \D(\bbone) = D(kQ\op)$ will be precisely the $12$ indecomposable right $kQ$-modules.

This all suggests that it should be possible to organize the $12$ objects in a simpler picture than our two chains of cubes in \autoref{fig:reinforced-cubes-1} and \autoref{fig:reinforced-cubes-2}. Namely, one can expect a diagram as in~\cite[\S5.7, p.~356]{happel:fdalgebra}, \cite[p. 560]{kellerneeman:may} or~\cite[Fig.~5.2, p.~98]{tilting}. This is indeed possible, but we are forced to construct such an \emph{incoherent} diagram for a reason explained in \autoref{rmk:anticommute}. Of course, in view of \autoref{thm:tiltsquare}, we can do the same when we start with a coherent trivalent sink or commutative square in a stable derivator.

\begin{thm} \label{thm:ar-quiver-d4}
Let $\D$ be a stable derivator and $X \in \D([1]^3_{\leq 1})$ be an object with underlying diagram
\[
\xymatrix@-.7pc{
&& a \\
s \ar[urr]^{\alpha} \ar[rr]|{\hole}|{\beta} \ar[drr]_{\gamma} && b \\
&& c.
}
\]
Then the data from the coherent diagrams obtained in \autoref{fig:reinforced-cubes-1} and \autoref{fig:reinforced-cubes-2} give rise to the following infinite incoherent diagram in $\D(\bbone)$ of the shape
\begin{equation} \label{eq:ar-quiver-d4}
\vcenter{
\xymatrix{
\ar[dr] && a \ar[dr]^{\alpha^*} && d \ar[dr]^{\alpha^*} && k \ar[dr]^{\alpha^*} && \Sigma a \ar[dr]
\\
\cdots \ar[r] &
s \ar[ur]^{\alpha} \ar[r]|{\hole}|{\beta} \ar[dr]_{\gamma} & b \ar[r]|{\hole}|{\beta^*} &
t \ar[ur]^{\alpha} \ar[r]|{\hole}|{\beta} \ar[dr]_{\gamma} & e \ar[r]|{\hole}|{\beta^*} &
u \ar[ur]^{\alpha} \ar[r]|{\hole}|{\beta} \ar[dr]_{\gamma} & \ell \ar[r]|{\hole}|{\beta^*} &
\Sigma s \ar[ur]^{\alpha} \ar[r]|{\hole}|{\beta} \ar[dr]_{\gamma} & \Sigma b \ar[r] & \cdots
\\
\ar[ur] && c \ar[ur]_{\gamma^*} && f \ar[ur]_{\gamma^*} && m \ar[ur]_{\gamma^*} && \Sigma c \ar[ur]
}
}
\end{equation}
and the morphisms satisfy the relations
\[ \alpha^* \alpha + \beta^*\beta + \gamma^*\gamma = 0 \]
and also
\[\alpha\alpha^* = 0, \quad \beta\beta^* = 0, \textrm{ and} \quad \gamma\gamma^* = 0\]
which all come from cofiber sequences.
\end{thm}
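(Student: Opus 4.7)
The plan is to read the diagram \eqref{eq:ar-quiver-d4} directly off the underlying data of $Y = F(X)$ and $T(Y)$ depicted in \autoref{fig:reinforced-cubes-1} and \autoref{fig:reinforced-cubes-2}, and then to deduce the stated relations from cofiber sequences already present inside those coherent diagrams. By \autoref{prop:sqrt-shift} together with \autoref{rmk:twelve}, the underlying diagrams of the iterated applications of $T$ to $Y$ exhaust, up to canonical (de)suspensions, precisely the twelve orbits $s,a,b,c,d,e,f,k,\ell,m,t,u$, and these data pass to the underlying category $\D(\bbone)$ in exactly the three-row pattern of \eqref{eq:ar-quiver-d4}. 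Under this identification, the arrows $\alpha,\beta,\gamma$ and $\alpha^*,\beta^*,\gamma^*$ together with their shifted variants are defined as the components, with respect to the canonical biproduct decompositions, of the biproduct-valued maps occurring in the three coherent cofiber sequences of \autoref{cor:sqrt-shift}.

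The relation $\alpha^*\alpha + \beta^*\beta + \gamma^*\gamma = 0$, together with its $\Sigma^n$-shifted versions and the analogous relations coming from the other two families of cofiber sequences in \autoref{cor:sqrt-shift}, follows at once from the fact that the composition of any two adjacent maps in a distinguished triangle vanishes (using \autoref{thm:triang} to know $\D(\bbone)$ is canonically triangulated). Decomposing the biproduct $a \oplus b \oplus c$ into its three canonical summands re-expresses the vanishing of $s \to a\oplus b\oplus c \to t$ as precisely the asserted equality, and applying $\Sigma^n$ (which is exact) handles all shifted cases uniformly.

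The individual relations $\alpha\alpha^* = \beta\beta^* = \gamma\gamma^* = 0$ require more work. For each $v \in \{a,b,c\}$ the idea is to locate, inside the coherent diagram of \autoref{fig:reinforced-cubes-1} (or equivalently inside the four-dimensional hypercube of \autoref{fig:Z}), a coherent cofiber sequence $v \to t \to w_v \to \Sigma v$ with $w_v \in \{d,e,f\}$ the appropriate partner of $v$. Such a sequence is produced by an octahedral/$3\times 3$ argument: the composite $v \hookrightarrow a\oplus b\oplus c \twoheadrightarrow t$ factors naturally through $\cof(s \to v)$, and comparison of the cofiber sequences $s \to v \to \cof(s\to v)$ and $s \to a\oplus b\oplus c \to t$ along the split inclusion $v \hookrightarrow a\oplus b\oplus c$ identifies the cofiber of $v \to t$ with the appropriate object of the next cube, namely $w_v$. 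Vanishing of adjacent compositions in the resulting triangle then yields the three individual relations. The main obstacle is the combinatorial bookkeeping required to verify that, under the explicit identifications of $d,e,f$ coming from the Kan extensions $(i_2)_!(i_1)_\ast$ in the construction of $F$ in the proof of \autoref{prop:source-square-to-cubes}, the $v$-component of the coherent map $t \to d\oplus e\oplus f$ actually coincides with the map $t \to w_v$ produced by the octahedron; this matching is a finite check performed by applying \autoref{cor:subsquares} and \autoref{prop:compcanc} to the relevant subcubes of $C$.
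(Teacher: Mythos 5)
Your overall strategy matches the paper: read the objects off the coherent diagrams of \autoref{fig:reinforced-cubes-1} and \autoref{fig:reinforced-cubes-2}, define $\alpha,\beta,\gamma,\alpha^*,\beta^*,\gamma^*$ as components of the biproduct-valued morphisms in \autoref{cor:sqrt-shift}, and get the sum relation $\alpha^*\alpha + \beta^*\beta + \gamma^*\gamma = 0$ from the vanishing of consecutive maps in the cofiber sequence $s\to a\oplus b\oplus c\to t$. That part is fine and is essentially what the paper does.

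Two issues with the rest. First, for the individual relations $\alpha\alpha^* = \beta\beta^* = \gamma\gamma^* = 0$ your octahedral detour is not what the paper does, and it is also not needed: the coherent diagrams already contain, among the bicartesian squares constraining $\D^{C,\mathrm{ex}}$, squares with two zero corners and small cubes whose Mayer--Vietoris triangles directly exhibit $d$, $e$, $f$ (and their shifts) as cofibers of the appropriate composites, so the vanishing of $a\to t\to d$ etc.\ comes out of the diagram without reconstructing an octahedron. Your argument may be salvageable, but it replaces a reading-off step with a nontrivial comparison of triangles that you then still have to match back against the coherent data, which is strictly more work.

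Second, and more importantly, you do not address the sign bookkeeping at all, and this is a genuine gap. The small cubes in \autoref{fig:reinforced-cubes-1} and \autoref{fig:reinforced-cubes-2} \emph{commute} (they are coherent), so the naive components of the biproduct maps would satisfy $\alpha^*\alpha = \beta^*\beta = \gamma^*\gamma$ rather than summing to zero; the mesh relations only hold once the components of the Mayer--Vietoris triangles are taken with their built-in sign conventions, and those signs have to be checked to match up consistently across all three families of cofiber sequences and across both figures. This is exactly why the paper's proof explicitly invokes the Mayer--Vietoris theorem for stable derivators, and why \autoref{rmk:anticommute} emphasizes that the small cubes anticommute in diagram \eqref{eq:ar-quiver-d4}. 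Your phrase ``combinatorial bookkeeping'' gestures at a matching problem but does not identify the sign issue, which is the actual content of the remaining verification.
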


\begin{rmk} \label{rmk:mesh-rel}
Here we abuse notation in that $\alpha,\beta,\gamma, \alpha^* ,\beta^* ,\gamma^*$ stand for different maps in the different positions in the diagram above. We have chosen this notation to be able to easily state the relations, which are usually called \textbf{mesh relations} (cf.\ \cite[\S~VII.1, p.~232]{ARS:representation}) in representation theory. We also refer to \cite[\S4.6]{happel:fdalgebra}, \cite[\S6.5]{gabriel:ar-seq} and \cite{riedtmann:koecher} for more background.
\end{rmk}

\begin{proof}[Proof of \autoref{thm:ar-quiver-d4}]
This just amounts to comparing the incoherent diagrams of the two objects in \autoref{fig:reinforced-cubes-1} and \autoref{fig:reinforced-cubes-2} produced from $X$ in \autoref{prop:source-square-to-cubes} and by means of \eqref{eq:H}, considering the bicartesian squares from \autoref{cor:sqrt-shift}, and bookkeeping signs using the Mayer--Vietoris theorem for stable derivators (\cite[Theorem~6.1]{gps:mayer} or \cite[Proposition~2.4]{stpo:co-t-str}).
\end{proof}

If one considers the Auslander--Reiten quiver of \autoref{thm:ar-quiver-d4} in the case of $D^b(kQ\op)$ for a field~$k$, then it is classical that the Auslander--Reiten translation is given by shifting one position to the left. The picture thus suggests that a threefold application of this translation yields the loop functor~$\Omega$. The following corollary gives us a coherent formulation of this observation.

\begin{cor}\label{cor:loop}
For an arbitrary stable derivator $\D$ there is a canonical isomorphism
\[
\tau^3\cong\Omega\colon\D^{C,\mathrm{ex}}\to\D^{C,\mathrm{ex}}.
\]
\end{cor}
\begin{proof}
This follows immediately from the calculation $\tau^3\cong T^{-3}\cong \Sigma^{-1}=\Omega$ which uses \autoref{defn:ARS} and \autoref{prop:sqrt-shift}.
\end{proof}

\begin{rmk} \label{rmk:anticommute}
The reader can readily see where all the bicartesian cubes, bicartesian squares, and cofiber sequences (hence triangles) from \autoref{fig:reinforced-cubes-1}, \autoref{fig:reinforced-cubes-2} and \autoref{fig:Z} are located in the diagram in \autoref{thm:ar-quiver-d4} in terms of objects. As for the morphisms, they are the same as in the coherent diagrams \emph{up to signs}. One immediately sees that the mesh relations imply that the small cubes from \autoref{fig:reinforced-cubes-1} and \autoref{fig:reinforced-cubes-2} \emph{anticommute} rather than commute in the diagram in \autoref{thm:ar-quiver-d4}. The limited ability to presently treat signs in coherent diagrams systematically unfortunately forces us to
\begin{itemize}
\item either split the whole picture into several coherent diagrams,
\item or pass to an incoherent diagram as in \autoref{thm:ar-quiver-d4}.
\end{itemize}
\end{rmk}

\begin{rmk} \label{rmk:KK-theory}
The same diagram as above has been rediscovered in situations where one might not have expected it---see~\cite[\S5]{meyer-nest:filtrated-k-th} in connection with $KK$-theory of C*-algebras.
\end{rmk}

\section{May's axioms via abstract representation theory}
\label{sec:May}

Given a triangulated category with an exact closed symmetric monoidal structure, any two distinguished triangles 
\[
x\to y\to z\to \Sigma x\qquad\text{and}\qquad x'\to y'\to z'\to \Sigma x'
\]
yield a plethora of additional distinguished triangles and these can be nicely organized in star-shaped diagrams (as in \autoref{fig:maytc3} with the exception of the object $v$ and the morphisms adjacent to it). In this context May's axioms~\cite{may:traces} ask for the existence of certain non-canonical additional objects and morphisms which then allow him to establish an abstract additivity result for Euler characteristics. Two seemingly different perspectives on these axioms are offered in the papers~\cite{kellerneeman:may} and~\cite{gps:additivity}.
\begin{enumerate}
\item It has been recognized in~\cite{kellerneeman:may} that there is a close relation to the fact that the commutative square and the trivalent source are strongly stably equivalent (although the authors of~\cite{kellerneeman:may} did not study the problem at this generality and mainly considered algebraic triangulated categories over a field~$k$). 
\item The paper~\cite{gps:additivity} is a formal study of the interaction of stability and monoidal structure, and therein the axioms are shown to simply capture certain naturality properties of the calculus of (homotopy) tensor products. In particular, coherent versions of certain diagrams showing up in May's axioms are constructed in that paper.
\end{enumerate}
Our constructions in \S\ref{sec:square} allow us to relate these two perspectives on the braid and the additivity axioms of May \cite{may:traces}, and to offer short proofs showing that these axioms are formal consequences of stability alone. 

A particular source of coherent commutative squares in homotopy theory is the following situation: If $\D$ is a monoidal derivator with monoidal structure 
\[
\otimes\colon\D\times\D\to\D
\]
(see~\cite[\S7]{gps:additivity}), then the external tensor product of coherent maps $X,Y\in \D^{[1]}$ is an object $X \otimes Y\in\D^\square$. We emphasize that, by definition of a monoidal derivator, the monoidal structure is supposed to preserve colimits in each variable independently. In particular, if we have a stable, monoidal derivator, then the monoidal structure is bi-exact.

In~\cite{gps:additivity} it is shown that May's axioms are satisfied in the context of an arbitrary stable, closed symmetric monoidal derivator. However, as indicated in~\cite[Remark 3.6]{kellerneeman:may}, some of the axioms follow easily from statements that are a formal consequence of stability
alone and do not depend on monoidal structure -- instead, as we will see, it suffices to start with an arbitrary coherent square in a stable derivator. The advantage of the representation theoretic perspective stressed in~\cite{kellerneeman:may} is that Auslander--Reiten quivers offer a different organization of these diagrams. Moreover, our diagrams \autoref{fig:reinforced-cubes-1} and \autoref{fig:reinforced-cubes-2} specialize to two extended and completely coherent versions of May's braid axioms (see \autoref{fig:tc3} and \autoref{fig:tc3'}).

To begin with, recall from \autoref{notn:reinforced-cubes} the definition of the category $C$ and $\D^{C,\mathrm{ex}}$. By \autoref{prop:source-square-to-cubes} for every stable derivator \D there is an equivalence of derivators $(G,j^\ast)\colon\D^\square\rightleftarrows\D^{C,\mathrm{ex}}$. The functor $G$ sends a square with underlying diagram 
\begin{equation}
\vcenter{
\xymatrix{
b\ar[r]\ar[d]&f\ar[d]\\
d\ar[r]&l
}
}
\label{eq:gensquare}
\end{equation}
to a coherent diagram as in \autoref{fig:reinforced-cubes-1}. 

Before we use this to describe our new perspective on the first braid axiom of May, let us recall from \cite{gps:additivity} that monoidal structures on derivators admit different equivalent reformulations, giving rise to internal, external, or canceling variants. In particular, given two small categories $A,B$ and a monoidal derivator $\D$ with monoidal structure $\otimes\colon\D\times\D\to\D$ there are (suitably associative and unital) external product operations of the form
\[
\otimes\colon\D^A\times\D^B\to \D^{A\times B}.
\] 
Specializing this to the case of $A=B=[1]$ there is the following result.

\begin{prop}\label{prop:tc3}
Let $\D$ be a stable monoidal derivator with monoidal structure $\otimes\colon\D\times\D\to\D$. The composition
\[
G\circ\otimes\colon\D^{[1]}\times\D^{[1]}\to\D^\square\to\D^{C,\mathrm{ex}}
\]
sends coherent diagrams $X=(x\to y),X'=(x'\to y')\in\D^{[1]}$ to a coherent diagram in $\D^{C,\mathrm{ex}}$ with underlying diagram as in \autoref{fig:tc3}.
\end{prop}
\begin{proof}
This is easily seen to follow from the bi-exactness of the monoidal structure and the defining exactness properties of $\D^{C,\mathrm{ex}}$.
\end{proof}

\begin{figure}
  \centering
   \begin{equation}\label{eq:tc3}
    \xymatrix@R=3pc@C=5pc{
      \Sigma^{-1}(y\otimes z')
      \ar[dr]
      \ar@(dl,ul)[d] &
      x\otimes x' \ar[dl] \ar[dr] &
      \Sigma^{-1}(z\otimes y')
      \ar[dl]
      \ar@(dr,ur)[d] \\
      y\otimes x'
      \ar@(dl,ul)[dd]
      \ar[ddr]
      \ar[dr]^(.4){p_1} &
      \Sigma^{-1}(z\otimes z')
      \ar[ddl]
      \ar[d]^{p_2}
      \ar[ddr] &
      x\otimes y'
      \ar[dl]_(.4){p_3}
      \ar[ddl]
      \ar@(dr,ur)[dd] \\
      & v \ar[dl]^(.6){j_3} \ar[d]^{j_2} \ar[dr]_(.6){j_1} \\
      z\otimes x' \ar@(dl,ul)[d] \ar[dr] &
      y\otimes y' \ar[dl] \ar[dr] &
      x\otimes z' \ar[dl] \ar@(dr,ur)[d] \\
      z\otimes y' &
      \Sigma(x\otimes x') &
      y\otimes z'
    }
  \end{equation}
  \caption{The (TC3) diagram from~\cite{may:traces}}
  \label{fig:maytc3}
\end{figure}

Let now $B$ be the poset given by the shape of the axiom (TC3) introduced by May in \cite{may:traces} and depicted in \autoref{fig:maytc3}.  The decoration of the objects in \autoref{fig:maytc3} and in \autoref{fig:tc3} uniquely defines a functor $k\colon B\to C$. If we apply the composition
\[
k^\ast\circ G\circ\otimes\colon\D^{[1]}\times\D^{[1]}\to\D^\square\to\D^{C,\mathrm{ex}}\to\D^B
\]
to $(X,X')$ then we obtain a coherent diagram whose underlying diagram looks objectwise like \autoref{fig:maytc3} (we do not want to get involved in a discussion of the morphisms which involves signs and instead refer the reader to \cite[Theorem~6.2]{gps:additivity}). The exactness properties of the coherent diagram $G(X\otimes X')$ imply that there are many classical distinguished triangles associated to $k^\ast G(X\otimes X')$. In fact, there are six distinguished triangles coming from the braids and three more triangles which come from the `reinforced link' given by~$v$, i.e., triangles which begin with the morphisms $j_i$ and $p_i$ for $i=1,2,3$. Moreover, in $k^\ast G(X\otimes X')$ the six squares having $v$ as corner are bicartesian, which by the Mayer--Vietoris theorem for stable derivators (\cite[Theorem~6.1]{gps:mayer} or \cite[Proposition~2.4]{stpo:co-t-str}) leads to six additional distinguished triangles. A coherent formulation of some of these exactness properties is not possible anymore once we pass to the star-shaped diagram \autoref{fig:maytc3} since we are lacking all the zero objects, and we loose even more information if we pass to the incoherent diagram in the underlying triangulated category. However, all these exactness properties are conveniently encoded in the derivator $\D^{C,\mathrm{ex}}$ (see again \autoref{fig:tc3}). In fact, that figure encodes a coherent version of May's braid axiom (TC3) which is extended infinitely in the positive and the negative direction. Let us emphasize once more that $G$ associates such a diagram to an \emph{arbitrary} coherent square as in \eqref{eq:gensquare}.

Let us now turn to our new perspective on the second braid axiom of May, the (TC3') axiom, which is depicted in \autoref{fig:maytc3'}. This diagram is obtained by rotating back the two original distinguished triangles to get
\[
\Sigma^{-1}z\to x\to y\to z\qquad\text{and}\qquad \Sigma^{-1} z'\to x'\to y'\to z',
\]
and then passing to the suspension of the associated (TC3)-diagram. We will now show that at the level of $\D^{C,\mathrm{ex}}$ the passage from (TC3) to (TC3') amounts to an application of the inverse Auslander--Reiten translation $\tau^{-1}$ (see \autoref{cor:TC3'}).

The identification $\square=[1]\times[1]$ has some implications for shifted derivators. More generally, given two small categories $A,B$ and a derivator $\D$, there are canonical isomorphisms of shifted derivators
\[
(\D^A)^B\cong \D^{A\times B}\cong (\D^B)^A.
\]
In our situation of the square, this means that there are two different identifications of $\D^\square$ with $(\D^{[1]})^{[1]}$. In particular, this implies that the cofiber morphism $\cof\colon\D^{[1]}\to\D^{[1]}$ induces two morphisms
\[
\cof_1\colon\D^\square\to\D^\square\qquad\text{and}\qquad\cof_2\colon\D^\square\to\D^\square,
\]
which take a coherent square and pass to the cofiber in the first or second direction, respectively. These functors behave as follows with respect to $G\colon\D^\square\to\D^{C,\mathrm{ex}}$.

\begin{figure}
  \centering
   \begin{equation}\label{eq:tc3'}
    \xymatrix@R=3pc@C=5pc{
      x\otimes y'
      \ar[dr]
      \ar@(dl,ul)[d] &
      \Sigma^{-1}(z\otimes z') \ar[dl] \ar[dr] &
      y\otimes x'
      \ar[dl]
      \ar@(dr,ur)[d] \\
      x\otimes z'
      \ar@(dl,ul)[dd]
      \ar[ddr]
      \ar[dr]^(.4){k_1} &
      y\otimes y'
      \ar[ddl]
      \ar[d]^{k_2}
      \ar[ddr] &
      z\otimes x'
      \ar[dl]_(.4){p_3}
      \ar[ddl]
      \ar@(dr,ur)[dd] \\
      & w \ar[dl]^(.6){q_3} \ar[d]^{q_2} \ar[dr]_(.6){q_1} \\
      y\otimes z' \ar@(dl,ul)[d] \ar[dr] &
      \Sigma(x\otimes x') \ar[dl] \ar[dr] &
      z\otimes y' \ar[dl] \ar@(dr,ur)[d] \\
      \Sigma(y\otimes x') &
      z\otimes z' &
      \Sigma(x\otimes y')
    }
  \end{equation}
  \caption{The (TC3') diagram from~\cite{may:traces}}
  \label{fig:maytc3'}
\end{figure}

\begin{prop}\label{prop:cofcof}
Let \D be a stable derivator. There are canonical isomorphisms
\[
G\circ\cof_1\cong r^\ast\circ\tau^{-1}\circ G\qquad\text{and}\qquad G\circ \cof_2\cong (r^\ast)^{-1}\circ\tau^{-1}\circ G
\]
of morphisms $\D^\square\to\D^{C,\mathrm{ex}}$.
\end{prop}
\begin{proof}
We begin by the statement about $\cof_1$ and consider an arbitrary coherent square $Q\in\D^\square$ as in \eqref{eq:gensquare}. The equivalence $G\colon\D^\square\to\D^{C,\mathrm{ex}}$ sends $Q$ to a coherent diagram as in \autoref{fig:reinforced-cubes-1}. We claim that $\cof_1(Q)$ is given by the (not necessarily bicartesian) square
\begin{equation}\label{eq:cof1}
\vcenter{
\xymatrix{
f\ar[r]\ar[d]&k\ar[d]\\
l\ar[r]&\Sigma c
}
}
\end{equation}
which can be found in that figure. To this end, we consider $q\colon\square\times[1]\to C$ defined by
\begin{enumerate}
\item $q(00,0)=010$, $q(10,0)=110$, $q(01,0)=021$, $q(11,0)=121$ and
\item $q(00,1)=011$, $q(10,1)=112$, $q(01,1)=021$, $q(11,1)=122$.
\end{enumerate}
The functor $q$ is checked to be well-defined, and the restrictions $q^\ast(G(Q))_{(-,0)}$ and $q^\ast(G(Q))_{(-,1)}$ respectively look like
\[
\xymatrix{
b\ar[r]\ar[d]&f\ar[d]&&d\ar[r]\ar[d]&l\ar[d]\\
0\ar[r]&k,&&0\ar[r]&\Sigma c.
}
\]
Moreover, since we know that $G(Q)$ lies in $\D^{C,\mathrm{ex}}$ and hence satisfies the corresponding exactness properties, it follows that these two squares are bicartesian. It further follows from \cite[Corollary~3.14]{groth:ptstab} that $q^\ast(G(Q))$ considered as an object of $(\D^{[1]})^\square$ is bicartesian, and, taking also into account the vanishing conditions, we see that $\cof_1(Q)$ is canonically isomorphic to \eqref{eq:cof1}. On the other hand, we showed in the proof of \autoref{prop:susp-shift} that the square in \autoref{fig:reinforced-cubes-1} with vertices $\Sigma b,\Sigma f,\Sigma d,$ and $\Sigma\ell$ is canonically isomorphic to the suspension of \eqref{eq:gensquare} and, by definition of $\mathrm{sh}\colon C \to C$, we also obtain a canonical identification of \eqref{eq:cof1} with $j^*\circ(\mathrm{sh}^\ast)\inv \circ \Sigma \circ G(Q)$, where $j^*$ is the inverse of $G$ from \autoref{prop:source-square-to-cubes}. Thus there is a canonical isomorphism $G(\cof_1(Q))\cong T(G(Q))$ by \autoref{prop:sqrt-shift}, which together with \autoref{defn:ARS} concludes the proof of this first case.

The proof of the second claim is essentially the same, and we hence leave most of the details to the reader. In that case we show that $\cof_2(Q)$ is canonically isomorphic to the (not necessarily bicartesian) square
\[
\xymatrix{
d\ar[r]\ar[d]&l\ar[d]\\
m\ar[r]&\Sigma a,
}
\]
which can be found in $G(Q)$ (see \autoref{fig:reinforced-cubes-1}). To achieve this, we consider this time the functor $q'\colon[1]\times\square\to C$ defined by
\begin{enumerate}
\item $q'(0,00)=010$, $q'(0,10)=011$, $q'(0,01)=210$, $q'(0,11)=211$ and
\item $q'(1,00)=110$, $q'(1,10)=112$, $q'(1,01)=210$, $q'(1,11)=212$.
\end{enumerate}
From this, we obtain a canonical isomorphism $G(\cof_2(Q))\cong r^\ast T(G(Q))$ which allows us to conclude.
\end{proof}

This allows for the following different interpretation of the Serre functor and the (inverse of the) Auslander--Reiten translation, which, for convenience, is formulated in the monoidal context. 

\begin{cor}\label{cor:cofcoffibfib}
Let \D be a stable, monoidal derivator with monoidal structure $\otimes\colon\D\times\D\to\D$ and let $X,Y\in\D^{[1]}$.
\begin{enumerate}
\item There is a canonical isomorphism $G(\cof X\otimes\cof Y)\cong \lS\circ G(X\otimes Y)$.
\item There is a canonical isomorphism $G\circ\Sigma(\fib X\otimes\fib Y)\cong \tau^{-1}\circ G(X\otimes Y)$.
\end{enumerate}
\end{cor}
\begin{proof}
The first case follows from the following chain of canonical isomorphisms,
\begin{align}
G(\cof X\otimes\cof Y)&\cong G\circ \cof_1 (X\otimes\cof Y)\\
&\cong G\circ \cof_1\circ \cof_2 (X\otimes Y)\\
&\cong \tau^{-2}\circ G(X\otimes Y)\\
&\cong \tau\circ\Sigma\circ G(X\otimes Y)\\
&\cong \lS \circ G(X\otimes Y).
\end{align}
The canonical isomorphisms respectively result from the bi-exactness of $\otimes$ in a stable derivator in the first two cases, from \autoref{prop:cofcof} and \autoref{cor:loop} in the next two steps, and from \autoref{prop:sqrt-shift} and \autoref{defn:ARS} in the final case.

The second statement can be derived from this one by observing that it yields a canonical isomorphism
$G(\fib X\otimes \fib Y)\cong \lS^{-1} \circ G (X\otimes Y)$. An application of $\Sigma$ implies
\[
G\circ \Sigma (\fib X\otimes \fib Y)\cong \Sigma\circ G(\fib X\otimes \fib Y)\cong \Sigma \circ \lS^{-1} \circ G (X\otimes Y),
\]
and we conclude using the equivalence $\tau\circ \Sigma\cong\lS$ as in the previous case.
\end{proof}

\begin{cor}\label{cor:TC3'}
Let $\D$ be a stable monoidal derivator with monoidal structure $\otimes\colon\D\times\D\to\D$. The composition
\[
\tau^{-1}\circ G\circ\otimes\colon\D^{[1]}\times\D^{[1]}\to\D^\square\to\D^{C,\mathrm{ex}}
\]
sends coherent arrows $X=(x\to y),X'=(x'\to y')\in\D^{[1]}$ to a coherent diagram in $\D^{C,\mathrm{ex}}$ with underlying diagram as in \autoref{fig:tc3'}.
\end{cor}
\begin{proof}
This is immediate from \autoref{prop:tc3} and \autoref{cor:cofcoffibfib} together with the remark that the (TC3') diagram is the suspension of the (TC3) diagram applied to the fibers of $X$ and $X'$.
\end{proof}

Thus, combining the construction described in this corollary with a suitable restriction functor as in the case of (TC3) gives us a recipe for constructing coherent diagrams as in (TC3'). Again, all exactness properties are conveniently encoded in \autoref{fig:tc3'}. At the level of $\D^{C,\mathrm{ex}}$ the inverse Auslander--Reiten translation $\tau^{-1}$ invokes a refined version of passing from a (TC3) diagram to a (TC3') diagram.

\begin{rmk}
We also have a coherent analogue of May's additivity axiom (TC4). Note that since we can pass from \autoref{fig:tc3} to \autoref{fig:tc3'} by $\tau\inv$, \autoref{cor:sqrt-shift} applies. In fact, inspecting its proof and \autoref{fig:Z}, we obtain a coherent triangle of the form
\[
\xymatrix{
v \ar[r] \ar[d] & (x\otimes z') \oplus (y\otimes y') \oplus (z\otimes x') \ar[r] \ar[d] & 0 \ar[d] \\
0 \ar[r] & w \ar[r] & \Sigma v.
}
\]
Thanks to the Mayer--Vietoris theorem for stable derivators (\cite[Theorem~6.1]{gps:mayer} or \cite[Proposition~2.4]{stpo:co-t-str}) we obtain a bicartesian square with underlying diagram of the form
\[
\xymatrix{
v \ar[r] \ar[d] & y\otimes y' \ar[d] \\
(x\otimes z') \oplus (z\otimes x') \ar[r] & w.
}
\]
\end{rmk}


\appendix

\bibliographystyle{alpha}
\bibliography{tilting}

\begin{landscape}
\begin{figure}
\centering
\xymatrix@-0.8pc{
\ar@{}[r]|\cdots & 0 \ar[drr] &&&&&& 0 \ar[drr] &&&&&& 0 \ar[drr] &&&&&& 0 \ar[drr]
\\
&&& a \ar[drr] \ar[rr] && f \ar[drr] \ar[urr] &
&&& m \ar[drr] \ar[rr] && \Sigma b \ar[drr] \ar[urr] &
&&& \Sigma e \ar[drr] \ar[rr] && \Sigma k \ar[drr] \ar[urr] &
&&& \Sigma^2 a \ar@{}[r]|\cdots & ~
\\
\ar@{}[r]|\cdots &
s \ar[urr] \ar@{.>}[rr] \ar[drr] &&
b \ar@{.>}[urr]|\hole \ar@{.>}[drr]|\hole &&
e \ar[rr] \ar[ddrr] &&
u \ar[urr] \ar@{.>}[rr] \ar[drr] &&
k \ar@{.>}[urr]|\hole \ar@{.>}[drr]|\hole &&
\Sigma a \ar[rr] \ar[ddrr] &&
\Sigma t \ar[urr] \ar@{.>}[rr] \ar[drr] &&
\Sigma f \ar@{.>}[urr]|\hole \ar@{.>}[drr]|\hole &&
\Sigma m \ar[rr] \ar[ddrr] &&
\Sigma^2 s \ar[urr] \ar@{.>}[rr] \ar[drr] &&
\Sigma^2 b \ar@{}[r]|\cdots & ~
\\
\ar@{}[r]|\cdots & 0 \ar@{.>}[urr]|\hole &&
c \ar[urr] \ar[rr] &&
d \ar@{-}+R!(0,1);+(4.8,2.8)\ar@{.>}+R!(5.1,3.95);+(15,8.75) \ar@{-}+R;+(7.5,0)\ar@{.>}+R!(8.3,0);+(15,0) &&
0 \ar@{.>}[urr]|\hole &&
\ell \ar[urr] \ar[rr] &&
\Sigma c \ar@{-}+R!(0,1);+(5.65,2.5)\ar@{.>}+R!(5.0,4.0);+(16,8.75) \ar@{-}+R;+(8,0)\ar@{.>}+R!(8.3,0);+(16,0) &&
0 \ar@{.>}[urr]|\hole &&
\Sigma d \ar[urr] \ar[rr] &&
\Sigma \ell \ar@{-}+R!(0,1);+(5.65,2.5)\ar@{.>}+R!(5.0,4.0);+(16,8.75) \ar@{-}+R;+(8.2,0)\ar@{.>}+R!(8.5,0);+(16,0) &&
0 \ar@{.>}[urr]|\hole &&
\Sigma^2 c \ar@{}[r]|\cdots & ~
\\
\ar@{}[r]|\cdots & 0 \ar[urr] &&&&&& 0 \ar[urr] &&&&&& 0 \ar[urr] &&&&&& 0 \ar[urr]
}
\caption{The coherent diagram $Y$ in $\D^{C,\mathrm{ex}}$.}
\label{fig:reinforced-cubes-1}
\end{figure}

\begin{figure}
\centering
\xymatrix@-0.8pc{
\ar@{}[r]|\cdots & 0 \ar[drr] &&&&&& 0 \ar[drr] &&&&&& 0 \ar[drr] &&&&&& 0 \ar[drr]
\\
&&& e \ar[drr] \ar[rr] && k \ar[drr] \ar[urr] &
&&& \Sigma a \ar[drr] \ar[rr] && \Sigma f \ar[drr] \ar[urr] &
&&& \Sigma m \ar[drr] \ar[rr] && \Sigma^2 b \ar[drr] \ar[urr] &
&&& \Sigma^2 e \ar@{}[r]|\cdots & ~
\\
\ar@{}[r]|\cdots &
t \ar[urr] \ar@{.>}[rr] \ar[drr] &&
f \ar@{.>}[urr]|\hole \ar@{.>}[drr]|\hole &&
m \ar[rr] \ar[ddrr] &&
\Sigma s \ar[urr] \ar@{.>}[rr] \ar[drr] &&
\Sigma b \ar@{.>}[urr]|\hole \ar@{.>}[drr]|\hole &&
\Sigma e \ar[rr] \ar[ddrr] &&
\Sigma u \ar[urr] \ar@{.>}[rr] \ar[drr] &&
\Sigma k \ar@{.>}[urr]|\hole \ar@{.>}[drr]|\hole &&
\Sigma^2 a \ar[rr] \ar[ddrr] &&
\Sigma^2 t \ar[urr] \ar@{.>}[rr] \ar[drr] &&
\Sigma^2 f \ar@{}[r]|\cdots & ~
\\
\ar@{}[r]|\cdots & 0 \ar@{.>}[urr]|\hole &&
d \ar[urr] \ar[rr] &&
\ell \ar@{-}+R!(0,1);+(5,2.8)\ar@{.>}+R!(5.3,4.2);+(15,8.75) \ar@{-}+R;+(7.5,0)\ar@{.>}+R!(8.3,0);+(15,0) &&
0 \ar@{.>}[urr]|\hole &&
\Sigma c \ar[urr] \ar[rr] &&
\Sigma d \ar@{-}+R!(0,1);+(5.6,2.4)\ar@{.>}+R!(5.0,4.0);+(16,8.75) \ar@{-}+R;+(8,0)\ar@{.>}+R!(8.3,0);+(16,0) &&
0 \ar@{.>}[urr]|\hole &&
\Sigma\ell \ar[urr] \ar[rr] &&
\Sigma^2 c \ar@{-}+R!(0,1);+(6.1,2.4)\ar@{.>}+R!(4.75,3.9);+(16,8.75) \ar@{-}+R;+(8.2,0)\ar@{.>}+R!(8.3,0);+(16,0) &&
0 \ar@{.>}[urr]|\hole &&
\Sigma^2 d \ar@{}[r]|\cdots & ~
\\
\ar@{}[r]|\cdots & 0 \ar[urr] &&&&&& 0 \ar[urr] &&&&&& 0 \ar[urr] &&&&&& 0 \ar[urr]
}
\caption{The coherent diagram $T(Y)$ in $\D^{C,\mathrm{ex}}$.}
\label{fig:reinforced-cubes-2}
\end{figure}

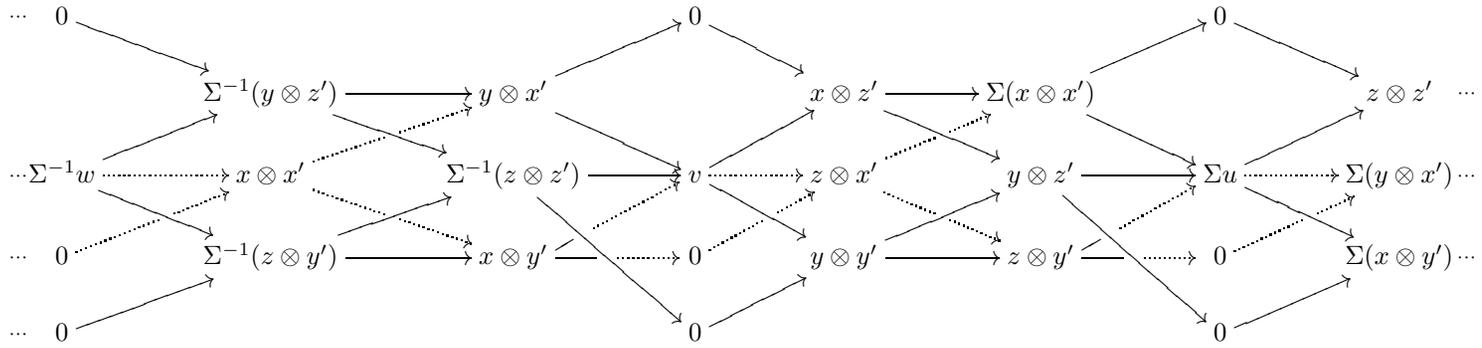
\begin{figure}
\centering
\xymatrix@-0.8pc{
\ar@{}[r]|\cdots & 0 \ar[drr] &&&&&& 0 \ar[drr] &&&&&& 0 \ar[drr]
\\
&&& \Sigma\inv(y\otimes z') \ar[drr] \ar[rr] &&
y\otimes x' \ar[drr] \ar[urr] &
&&& x\otimes z' \ar[drr] \ar[rr] &&
\Sigma(x\otimes x') \ar[drr] \ar[urr] &
&&& z \otimes z' \ar@{}[r]|{\quad\cdots} &
\\
\ar@{}[r]|\cdots &
\Sigma\inv w \ar[urr] \ar@{.>}[rr] \ar[drr] &&
x\otimes x' \ar@{.>}[urr]|\hole \ar@{.>}[drr]|\hole &&
\Sigma\inv(z\otimes z') \ar[rr] \ar[ddrr] &&
v \ar[urr] \ar@{.>}[rr] \ar[drr] &&
z\otimes x' \ar@{.>}[urr]|\hole \ar@{.>}[drr]|\hole &&
y\otimes z' \ar[rr] \ar[ddrr] &&
\Sigma u \ar[urr] \ar@{.>}[rr] \ar[drr] &&
\Sigma(y \otimes x') \ar@{}[r]|{\quad\cdots} &
\\
\ar@{}[r]|\cdots & 0 \ar@{.>}[urr]|\hole &&
\Sigma\inv(z\otimes y') \ar[urr] \ar[rr] &&
x\otimes y' \ar@{-}+R!(0,1);+(8,2.4)\ar@{.>}+R!(5.1,3.95);+(22,10) \ar@{-}+R;+(11,0)\ar@{.>}+R!(8.3,0);+(22,0) &&
0 \ar@{.>}[urr]|\hole &&
y\otimes y' \ar[urr] \ar[rr] &&
z\otimes y' \ar@{-}+R!(0,1);+(8,2.45)\ar@{.>}+R!(5.1,3.95);+(20.5,9.5) \ar@{-}+R;+(11,0)\ar@{.>}+R!(8.3,0);+(20.5,0) &&
0 \ar@{.>}[urr]|\hole &&
\Sigma(x \otimes y') \ar@{}[r]|{\quad\cdots} &
\\
\ar@{}[r]|\cdots & 0 \ar[urr] &&&&&& 0 \ar[urr] &&&&&& 0 \ar[urr]
}
\caption{The extended coherent version of May's axiom (TC3).}
\label{fig:tc3}
\end{figure}

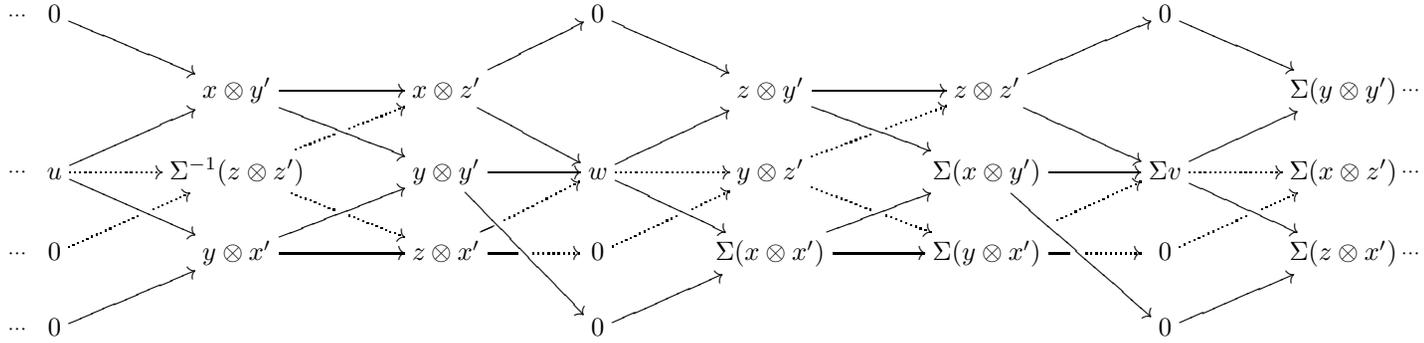
\begin{figure}
\centering
\xymatrix@-0.8pc{
\;\;\;\,\ar@{}[r]|\cdots & 0 \ar[drr] &&&&&& 0 \ar[drr] &&&&&& 0 \ar[drr]
\\
&&& x\otimes y' \ar[drr] \ar[rr] &&
x \otimes z' \ar[drr] \ar[urr] &
&&& z\otimes y' \ar[drr] \ar[rr] &&
z\otimes z' \ar[drr] \ar[urr] &
&&& \Sigma(y \otimes y') \ar@{}[r]|{\quad\cdots} &
\\
\ar@{}[r]|\cdots &
u \ar[urr] \ar@{.>}[rr] \ar[drr] &&
\Sigma\inv(z\otimes z') \ar@{.>}[urr]|\hole \ar@{.>}[drr]|\hole &&
y\otimes y' \ar[rr] \ar[ddrr] &&
w \ar[urr] \ar@{.>}[rr] \ar[drr] &&
y\otimes z' \ar@{.>}[urr]|\hole \ar@{.>}[drr]|\hole &&
\Sigma(x\otimes y') \ar[rr] \ar[ddrr] &&
\Sigma v \ar[urr] \ar@{.>}[rr] \ar[drr] &&
\Sigma(x \otimes z') \ar@{}[r]|{\quad\cdots} &
\\
\ar@{}[r]|\cdots & 0 \ar@{.>}[urr]|\hole &&
y\otimes x' \ar[urr] \ar[rr] &&
z\otimes x' \ar@{-}+UR!(-1,0);+(6.4,3.6)\ar@{.>}+R!(2.2,4.4);+(17.7,9.5) \ar@{-}+R;+(9.5,0)\ar@{.>}+R!(6,0);+(17.7,0) &&
0 \ar@{.>}[urr]|\hole &&
\Sigma(x\otimes x') \ar[urr] \ar[rr] &&
\Sigma(y\otimes x') \ar@{.>}+R!(1.1,4.4);+(20.4,9.5) \ar@{-}+R;+(11,0)\ar@{.>}+R!(5,0);+(20.4,0) &&
0 \ar@{.>}[urr]|\hole &&
\Sigma(z \otimes x') \ar@{}[r]|{\quad\cdots} &
\\
\ar@{}[r]|\cdots & 0 \ar[urr] &&&&&& 0 \ar[urr] &&&&&& 0 \ar[urr]
}
\caption{The extended coherent version of May's axiom (TC3') obtained by $\tau\inv$ from (TC3).}
\label{fig:tc3'}
\end{figure}

\end{landscape}

\end{document}